\newtheorem{theorem}{Theorem}[section]
\newtheorem{corollary}[theorem]{Corollary}
\newtheorem{lemma}[theorem]{Lemma}
\newtheorem{remark}[theorem]{Remark}
\def\@makefnmark{}
\begin{document}
\title[Ground states of bi-harmonic equations with critical exponential growth]{Existence and Non-existence of Ground states of bi-harmonic equations involving constant and degenerate Rabinowitz potentials}
\author{Lu Chen, Guozhen Lu and Maochun Zhu}
\address{School of Mathematics and Statistics, Beijing Institute of Technology, Beijing 100081, P. R. China}
\email{chenlu5818804@163.com}
\address{Department of Mathematics\\
University of Connecticut\\
Storrs, CT 06269, USA}
\email{guozhen.lu@uconn.edu}
\address{Faculty of Science\\
Jiangsu University\\
Zhenjiang, 212013, P. R. China\\}
\email{zhumaochun2006@126.com}

\thanks{The first author was  supported partly by the National Natural Science Foundation of China (No. 11901031). The second author was supported partly by the Simons Foundation. The
third author was supported partly by Natural Science Foundation of China (12071185). }

\begin{abstract}
Recently, the authors of the current paper established in \cite{chenlu2}
the existence of a ground-state solution to the following bi-harmonic equation with the constant potential or Rabinowitz potential:
\begin{equation}\label{bi-harmonic1a}
(-\Delta)^{2}u+V(x)u=f(u)\ \text{in}\ \mathbb{R}^{4},
\end{equation}
when the nonlinearity has the special form $f(t)=t(\exp(t^2)-1)$ and $V(x)\geq c>0$ is a constant or the Rabinowitz potential. One of the crucial elements used in \cite{chenlu2} is  the Fourier rearrangement argument. However, this argument is not applicable if $f(t)$ is not an odd function.  Thus, it still remains open whether the equation (\ref{bi-harmonic1a}) with the general critical exponential nonlinearity $f(u)$ admits a ground-state solution even when $V(x)$ is a positive constant.

The first purpose of this paper is to develop a Fourier rearrangement-free approach to solve the above problem. More precisely, we will prove that there is a threshold $\gamma^{*}$ such that for any $\gamma\in (0,\gamma^*)$, the equation (\ref{bi-harmonic1a}) with the constant potential $V(x)=\gamma>0$ admits a ground-state solution, while does not admit any ground-state solution for any $\gamma\in (\gamma^{*},+\infty)$. The second purpose of this paper is to establish the existence of a ground-state solution to the  equation (\ref{bi-harmonic1a}) with any degenerate Rabinowitz potential $V$ vanishing on some bounded open set. Among other techniques, the proof also relies on a critical Adams inequality involving the degenerate potential which is of its own interest.
\end{abstract}

\maketitle {\small {\bf Keywords:} Rabinowitz potential, Ground state
solutions; Bi-harmonic equations;  Adams' inequalities, Nehari manifold. \\

{\bf 2010 MSC.}  35J91, 35B33, 35J30, 35J35, 46E35, 26D10.}

\section{\bigskip Introduction}

We begin with considering the following nonlinear partial differential equation

\begin{equation}
\left(  -\Delta\right)  ^{m}u+V\left(  x\right)  u=f\left(  u\right)  \text{
in }%
%TCIMACRO{\U{211d} }%
%BeginExpansion
\mathbb{R}
%EndExpansion
^{n},\label{potential eq}%
\end{equation}
where $m$ is a positive integer,  $V(x)$ is some nonnegative potential. Equations (\ref{potential eq}) with subcritical and critical growth have been extensively studied by many
authors. In the case $n>2m$, the subcritical and critical growth means that the
nonlinearity cannot exceed the polynomial growth of degree $\frac{n+2m}%
{n-2m}$ by the Sobolev embedding. While in the case $n=2m$, we say that
$f\left(  s\right)  $ has \textit{critical exponential growth} at infinity if
there exists $\alpha_{0}>0$ such that \
\begin{equation}
\underset{|t|\rightarrow+\infty}{\lim}\frac{|f\left(  t\right)|  }{\exp\left(
\alpha t^{2}\right)  }=%
%TCIMACRO{\QATOPD{\{}{.}{0\text{, \ \ for }\alpha>\alpha_{0}}{+\infty\text{,
%for }\alpha<\alpha_{0}} }%
%BeginExpansion
\genfrac{\{}{.}{0pt}{}{0\text{, \ \ for }\alpha>\alpha_{0}}{+\infty\text{, for
}\alpha<\alpha_{0}}.
%EndExpansion
\label{exponential critical1}%
\end{equation}

The critical exponential growth in the case $m=1,n=2$ is consistent with the Trudinger-Moser inequality (\cite{Mo}, \cite{Tru}), while in the case $m=2,n=4$ is given by the Adams inequality \cite{A}. The study of the existence for equation (\ref{potential eq}) with the critical exponential growth involves a lack of compactness, a.e. the Palais-Smale compactness condition may fail at some special level. However, unlike the equations on bounded domain (see  e.g., \cite{Ambrosetti}, \cite{Brezis}, \cite{de Figueiredo},    \cite{LamLu-ANS}, \cite{LamLu-JGA}, \cite{Lam}), the loss of compactness for equations (\ref{potential eq}) may be produced not only by the concentration phenomena but also by the vanishing phenomena.

\medskip

 The earlier study of the existence of solutions for equation (\ref{potential eq}) with the critical exponential growth can date back to the work of Atkinson and Peletier \cite{Atkinson, Atkinson2}. Indeed, the authors obtained the existence of ground state solutions for equation (\ref{potential eq}) in $\mathbb{R}^2$ by assuming that there exists some $y_{0}>0$ such that
$g\left(  t\right)  =\log f\left(  t\right)  $ satisfies $$g^{\prime}\left(
t\right)  >0,g^{\prime\prime}\left(  t\right)  \geq0,$$ for any $t\geq y_{0}$. This kind of growth condition allows us to take the nonlinearity $f\left(
t\right)  =\left(  t^{2}-t\right)  \exp\left(  t^{2}\right), $ which has critical exponential growth.

\medskip

As far as we are concerned, having a positive lower bound has become a standard assumption on the potential $V(x)$ in dealing with the existence of solutions to the equations (\ref{potential eq}) in the literature, we will briefly describe some of the relevant works below.

When $V\left(  x\right)  $ is a coercive potential, that is,
\[
V\left(  x\right)  \geq V_{0}>0,\text{and additionally either}\underset{x\rightarrow\infty
}{\lim}V(x)=+\infty\text{ or }\frac{1}{V}\in L^{1}\left(
%TCIMACRO{\U{211d} }%
%BeginExpansion
\mathbb{R}
%EndExpansion
^{n}\right)  ,
\]
the existence results of equation (\ref{potential eq}) can be
found in the papers e.g., \cite{doO1}, \cite{Lamlu}, \cite{Yang}, \cite{zhao} and the
references\ therein. Their proofs depend crucially on the compact embeddings
given by the coercive potential, and the vanishing phenomena can be ruled out.
\vskip0.1cm

When $V\left(  x\right)  $ is the constant potential, i.e. $V\left(  x\right)
=\gamma>0$, the natural space for a variational treatment of (\ref{potential eq})
is $W^{m,2}\left(
%TCIMACRO{\U{211d} }%
%BeginExpansion
\mathbb{R}
%EndExpansion
^{n}\right)  $. It is well known that the embedding $W^{m,2}\left(
%TCIMACRO{\U{211d} }%
%BeginExpansion
\mathbb{R}
%EndExpansion
^{n}\right)  \hookrightarrow L^{2}\left(
%TCIMACRO{\U{211d} }%
%BeginExpansion
\mathbb{R}
%EndExpansion
^{n}\right)  $\ is continuous but not compact, even in the radial case.

In the case $m=1$ and $n=2$, the authors of  \cite{Alves}  obtained
the existence of a ground solution to equation
(\ref{potential eq})   under the assumptions that for any
$p>2$,
\begin{equation}
f\left(  s\right)  \geq\eta_{p}s^{p-1},\forall s\geq0, \label{polo}%
\end{equation}
where $\eta_{p}$ is some constant depending on $p$. In \cite{Rufs}, the authors also obtained the existence of a ground state solution to (\ref{potential eq}) under
 \begin{equation}
\underset{|s|\rightarrow\infty}{\lim}\frac{sf\left(  s\right)  }{\exp\left(
32\pi^{2}s^{2}\right)  }\geq\beta_{0}>0. \label{exp}%
\end{equation}
In general, \eqref{polo} and \eqref{exp} are not comparable.  In \cite{IMN}, the authors proved that there exists a positive $\gamma^*$ such that for any $0<V=\gamma<\gamma^*$, the equation (\ref{potential eq})  has a ground state solution under a weaker assumptions than both \eqref{polo} and \eqref{exp}.

In
the case $m=2$, the existence of a nontrivial solution to equation
(\ref{potential eq}) was obtained   in \cite{chenluzhang}  under the assumption that  \eqref{polo} holds (see also
\cite{bao}),  and  in \cite{sani}
  under the assumption that \eqref{exp} holds. The existence of a nontrivial solution to (\ref{potential eq}) under the assumption weaker than  both \eqref{polo} and \eqref{exp} was established in \cite{chenlu2}. Furthermore, the existence of a ground state solution to (\ref{potential eq})
  was only recently proved  in \cite{chenlu2}. (see more detailed discussions below.)

 \medskip

 We recall that the following Trudinger-Moser inequality holds (see \cite{ruf}, \cite{liruf}):

 \begin{equation}\label{trudinger-morser}
\sup_{u\in W^{1,n}(\mathbb{R}^{n}), \int_{\mathbb{R}^n}\left(|\nabla u|^n+|u|^n\right)dx\leq1}\int_{\mathbb{R}^{n}}\Phi_n\big(\alpha_n |u|^{\frac{n}{n-1}}\big)dx<\infty.
\end{equation}
where $\omega _{n - 1}$ denotes the area of the unit sphere in $\mathbb{R}^n$ and $\Phi_{n}(t):=e^{t}-\sum_{i=0}^{n-2}\frac{t^{i}}{i!}$.
The proof of the Trudinger-Moser inequality in \cite{ruf} and \cite{liruf}
relies on the P\'{o}lya-Szeg\"{o} inequality and the symmetrization argument. Subsequently, the authors in \cite{LaLu4} used a symmetrization-free approach to give a simple proof for the sharp Trudinger-Moser inequalities in $W^{1,n}(\mathbb{R}^n)$ (see also \cite{LamLu-AIM}).
 We have proved more recently in \cite{chenlu1} and \cite{chenlu3}  the following Trudinger-Moser inequality in higher dimension $\mathbb{R}^n$  ($n\ge 2$) under the less restrictive constraint $$\int_{\mathbb{R}^n}\left(|\nabla u|^n+V(x)|u|^n\right)dx\le 1,$$ where $V(x)\geq 0$ satisfying:
\vskip0.1cm

(V1): $V(x)=0$ at $B_{\delta}(0)$ and $V(x)\geq c_0$ in $\mathbb{R}^n\setminus B_{2\delta}(0)$ for some $c_0, \delta>0$.

\vskip0.1cm

{\bf Theorem A.}
Assume that the potential $V(x)$ satisfies the condition (V1). Then
\begin{equation}\label{trudinger-morser}
\sup_{u\in W^{1,n}(\mathbb{R}^{n}), \int_{\mathbb{R}^n}\left(|\nabla u|^n+V(x)|u|^n\right)dx\leq1}\int_{\mathbb{R}^{n}}\Phi_n\big(\alpha_n |u|^{\frac{n}{n-1}}\big)dx<\infty.
\end{equation}

\vskip0.1cm

We note that  the loss of a positive lower bound of the potential $V(x)$ makes this inequality become fairly nontrivial.

Sharp Adams inequalities on the entire space $\mathbb{R}^n$ were studied in ~\cite{RS}
under the constraint
$$\{
u\in W^{{m},\frac{{n}}{m}}| \Vert(I-\Delta)^{\frac{m}{2}}u\Vert_{\frac{n}{m}%
}\leq1
\},$$
when $m$ is an even integer. When the order $m$ of the derivatives is odd, a sharp Adams inequality was established
in \cite{LamLu-JDE2012}. The same authors in \cite{LaLu4} give a unified approach for all orders $m$ of derivatives including fractional orders of derivatives through the rearrangement-free argument.  Furthermore, they also obtained
the following sharp Adams inequality under the Sobolev norm constraint: let $\tau>0$,

\begin{equation}
\underset{\Vert\Delta u\Vert_{2}^{2}+\tau\Vert u\Vert_{2}^{2}\leq{1}}%
{\underset{u\in W{^{2,2}(\mathbb{R}^{4})}}{\sup}}\int_{{\mathbb{R}^{4}}}\left(
\exp(\beta|u(x)|^{2})-1\right)  dx\left\{
\begin{array}
[c]{l}%
\leq C\text{ if }\beta\leq{32\pi}^{2}{,}\\
=+\infty\text{ \ if }\beta>{32\pi}^{2}{.}%
\end{array}
\right.  \label{Adams entire space}%
\end{equation}

As an application of critical Adams inequality  \eqref{Adams entire space} on the whole space $\mathbb{R}^4$, the authors of \cite{chenlu2} obtained the existence of a non-trivial radial solution to the following bi-harmonic equation with the constant
potential:
\begin{equation}
(-\Delta)^{2}u+\gamma u=f(u)\ \text{in}~\mathbb{R}^{4},\ \label{bi-harmonic11}%
\end{equation}
when the nonlinearity $f(t)$ has the critical exponential growth at infinity. However, the existence of a ground state solution was not proved in \cite{chenlu2}. More precisely, in \cite{chenlu2} the following was proved:

\medskip
{\bf Theorem B.} \cite{chenlu2}
\label{thm3} Assume that $f$ satisfies $f(0)=0$ and the conditions
 (i), (ii), (iii) and (iv) in Section 2,  then there exists $\gamma^{\ast
}\in(0,+\infty]$ such that for any $\gamma\in(0,\gamma^{\ast})$, the equation
(\ref{bi-harmonic11}) admits a non-trivial radial solution. Moreover,
$\gamma^{\ast}$ is equal to the radial Adams' ratio:
\[
C_{A}^{\ast}=\sup\left\{  \frac{2}{\Vert u\Vert_{2}^{2}}\int_{\mathbb{R}^{4}%
}F(u)dx|\ u\in W_{r}^{2,2}(\mathbb{R}^{4})\setminus \{0\},\Vert\Delta u\Vert_{2}^{2}\leq
\frac{32\pi^{2}}{\alpha_{0}}\right\}  ,
\]
where $W_{r}^{2,2}(\mathbb{R}^{4})$  is the collection of all radial functions
in $W^{2,2}(\mathbb{R}^{4})$ and $F(t)=\int_{0}^{t}f(s)ds$. In particular, $\gamma^{\ast}=+\infty$ is equivalent to
\[
\lim_{|t|\rightarrow\infty}\frac{t^{2}F(t)}{\exp(\alpha_{0}t^{2})}=+\infty.
\]

Furthermore, if the nonlinearity has the special form $f(t)=\lambda t\exp(2|t|^{2})$, the authors  can further prove that the solutions obtained are ground-state solutions:

\medskip

{\bf Theorem C.}  \cite{chenlu2} For any $\gamma\in(0,+\infty)$, the equation

\[
(-\Delta)^{2}u+\gamma u=\lambda u\exp(2|u|^{2})\ \text{in}~\mathbb{R}^{4}%
\]
admits a radial ground state solution if $\lambda\in\left(  0,\gamma\right)  $.

\begin{remark}We cannot use the Schwarz symmetrization principle directly in the proof of Theorem C due to the presence of the higher order derivatives. In order to overcome this difficulty, in \cite{chenlu2} the authors applied the Fourier rearrangement proved by Lenzmann and Sok in \cite{Lenzmann} to obtain a radially minimizing sequence for the
infimum on the Pohozaev manifold. We stress that the Fourier rearrangement argument requires that $f(s)$ must be odd and all the
coefficients of the Taylor series for the primitive function $F(s)$ must be positive.
\end{remark}
Based on the above result, by exploiting the
relationship between the Nehari manifold and the corresponding limiting Nehari manifold, the authors can also obtain the existence of ground state
solutions of the bi-harmonic equation with the non-radial Rabinowitz type potential introduced in \cite{Rabinowitz1}:

{\bf Theorem D.} \cite{chenlu2}
\bigskip\label{thm6} Assume that $V\left(  x\right)  $ is a continuous
function satisfying
\[
0<\lambda<V_{0}=\underset{x\in\mathbb{R}^{4}}{\inf}V\left(  x\right)<\underset{x\in\mathbb{R}^{4}}{\sup}V\left(  x\right)
=\underset{\left\vert x\right\vert \rightarrow\infty}{\lim}V\left(  x\right)
=\gamma<+\infty,
\]
the equation \[
(-\Delta)^{2}u+V(x) u=\lambda u\exp(2|u|^{2})\ \text{in}~\mathbb{R}^{4}%
\]
 admits a ground state solution which is not necessarily radial.

\medskip

The following remarks are in order. First, as we pointed out earlier, the method  in \cite{chenlu2} of using the Fourier rearrangement to establish the existence of a ground state solution is not applicable to more general nonlinearity $f$ than the special form $f(t)=\lambda t\exp(2|t|^{2})$ (see Theorem C).
Therefore,   new method without using the Fourier rearrangement needs to be developed to deal with the more general nonlinearity $f$. Our method in this paper does not rely on the P\'olya-Szeg\"{o}
inequality nor the Fourier rearrangement.
 Second, in our earlier work \cite{chenlu2} we assume that the potential $V$  has a positive low bound in the entire space $\mathbb{R}^4$. Another novelty in this paper is that the potential $V$
can be degenerate on an open bounded set in $\mathbb{R}^4$. Third, the Adams inequality under the Sobolev norm associated with the degenerate potential $V$ is established and is of its independent interest. This Adams embedding is necessary to establish the existence of the ground state solution. Fourth, our result is fairly sharp in the sense that we have identified a threshold $\gamma^*$ for the bi-harmonic equation with constant potential $(-\Delta)^{2}u+\gamma u=f(u)\ \text{in}~\mathbb{R}^{4}$ such that the existence of a ground state solution is guaranteed for any $\gamma\in (0, \gamma^*)$ and the nonexistence of any ground state solution is assured for any $\gamma\in (\gamma^*, \infty)$.

\medskip
Therefore,
the main purpose of this paper is to answer  the following two questions:

   \medskip
   1. Can the solution in Theorem B be a ground-state solution and  does Theorem C still hold when the nonlinearity $f$ is a more general function satisfying the critical exponential growth and the Ambrosetti-Rabinowitz condition  rather than having a special form $f(t)=\lambda t\exp(2|t|^{2})$?

\medskip

2. Does Theorem D still hold when the potential $V$ is a degenerate Rabinowitz type potential and furthermore the nonlinearity $f$ is a more general function satisfying the critical exponential growth and the Ambrosetti-Rabinowitz condition  rather than having a special form $f(t)=\lambda t\exp(2|t|^{2})$?

\section{The main results}
 Motivated by the results just described, in this paper, we first consider the following bi-harmonic equation with the constant
potential:
\begin{equation}
(-\Delta)^{2}u+\gamma u=f(u)\ \text{in}~\mathbb{R}^{4},\ \label{3.1}%
\end{equation}
where the nonlinearity $f(t)$ satisfies $f(0)=0$  and  the following properties:

(i) has critical exponential growth (\ref{exponential critical1}).

(ii) (Ambrosetti-Rabinowitz condition)(A-R) \cite{Ambrosetti, Rabinowitz2, Rabinowitz3}  There exists $\mu>2$ such that $0<\mu F(t)=\mu\int_{0}^{t}f(s)ds\leq tf(t)$
for any $t\in\mathbb{R}$;\newline

(iii) there exist $t_{0}$ and $M_{0}>0$ such that $F(t)\leq M_{0}|f(t)|$ for any
$|t|\geq t_{0}$.

(iv) $f(t)=o(t)$ as $t\rightarrow 0$.

(v) $f(t)\in C^1$ and $\frac{f(t)}{t}$ is increasing.

\begin{remark}\label{adrem}
The condition (ii) implies that $F(t)=o(t^2)$ as $t\rightarrow 0$. Indeed, the condition (ii) implies that $\left(\frac{F(t)}{t^{\mu}}\right)^{\prime}>0$, from which one can immediately get $F(t)=o(t^2)$ as $t\rightarrow 0$. From  conditions (i),(ii) and (iv) above, one can obtain the following growth condition for $f(t)$: for any $\varepsilon>0$ and $\beta_{0}>\alpha_{0}$, there
exists $C_{\varepsilon}$ such that
\begin{equation}
|f(t)|\leq\varepsilon|t|+C_{\varepsilon}|t|^{\mu}\left(e^{\beta_{0}t^{2}}-1\right),\forall
\ t\in\mathbb{R}\label{xin1}.%
\end{equation}
From (v), one can also easily check that the function $f(t)t-2F(t)$ is increasing.
\end{remark}

Our first result is the following

\begin{theorem}\label{thm1}
\label{thm3} Assume that $f$ satisfies $f(0)=0$ and the conditions
 (i), (ii) and (iii), then there exists $\gamma^{\ast
}\in(0,+\infty]$ such that the equation
(\ref{3.1}) admits a ground-state solution for any $\gamma\in(0,\gamma^{\ast})$, and does not admit any ground-state solution for any
$\gamma\in(\gamma^{\ast},+\infty)$, where $\gamma^*$
 is equal to the Adams' ratio:
\[
\gamma^{\ast}=\sup\left\{  \frac{2}{\Vert u\Vert_{2}^{2}}\int_{\mathbb{R}^{4}%
}F(u)dx|\ u\in W^{2,2}(\mathbb{R}^{4})\setminus \{0\},\Vert\Delta u\Vert_{2}^{2}\leq
\frac{32\pi^{2}}{\alpha_{0}}\right\}
.\]
In particular, $\gamma^{\ast}=+\infty$ is equivalent to
\[
\lim_{|t|\rightarrow\infty}\frac{t^{2}F(t)}{\exp(\alpha_{0}t^{2})}=+\infty.
\]
\end{theorem}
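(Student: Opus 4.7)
The plan is a Nehari-manifold variational approach. Let
\[
J_\gamma(u) = \tfrac12\|\Delta u\|_2^2 + \tfrac{\gamma}{2}\|u\|_2^2 - \int_{\mathbb{R}^4}F(u)\,dx,\qquad \mathcal{N}_\gamma = \{u \neq 0 : \langle J_\gamma'(u),u\rangle = 0\},
\]
and set $m_\gamma = \inf_{\mathcal{N}_\gamma}J_\gamma$. Both halves of the theorem pivot on the Pohozaev identity in the critical dimension $n=2m=4$: testing the equation against $x\cdot\nabla u$, the biharmonic contribution drops out, yielding
\[
\gamma\|u\|_2^2 = 2\int_{\mathbb{R}^4}F(u)\,dx,
\]
so that $J_\gamma(u) = \tfrac12\|\Delta u\|_2^2$ for every solution $u$. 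This formula is the bridge between the two halves of the argument.

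For existence ($\gamma<\gamma^*$), standard arguments based on (i)--(iii) and the growth estimate (\ref{xin1}) give mountain-pass geometry for $J_\gamma$, positivity $m_\gamma>0$, and identification of $m_\gamma$ with the mountain-pass level (under the monotonicity (v), which I assume is in force). The crucial preliminary estimate is the strict inequality $m_\gamma<16\pi^2/\alpha_0$, precisely the level below which the critical Adams inequality (\ref{Adams entire space}) restores compactness. I would prove it from $\gamma<\gamma^*$: by definition of $\gamma^*$ one can pick $u_0\in W^{2,2}(\mathbb{R}^4)$ with $\|\Delta u_0\|_2^2\leq 32\pi^2/\alpha_0$ and $2\int F(u_0)/\|u_0\|_2^2>\gamma$, and projecting $u_0$ onto $\mathcal{N}_\gamma$ at $t_*u_0$ yields the strict bound. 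With this in hand I would extract a bounded Cerami sequence $(u_n)\subset\mathcal{N}_\gamma$ (boundedness from (A-R)) and run a translation-based Lions-type dichotomy: vanishing of $\sup_y\int_{B_1(y)}u_n^2$, combined with (\ref{Adams entire space}) applied below the Adams threshold, would force $\int u_n f(u_n)\to 0$ and hence $u_n\to 0$, contradicting $m_\gamma>0$; in the non-vanishing case, after a translation, a Brezis-Lieb splitting together with an $L^1$-convergence argument for $F(u_n)$ and $f(u_n)$ (justified because the residual energy sits strictly below the Adams threshold) furnishes a nonzero weak limit $u$ that solves the equation and attains $J_\gamma(u) = m_\gamma$.

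For non-existence ($\gamma>\gamma^*$) I argue by contradiction: if a ground state $u$ existed, Pohozaev gives $m_\gamma = \tfrac12\|\Delta u\|_2^2$. A parallel test-function computation using a concentrating Adams-Moser family $\omega_\epsilon$ with $\|\Delta\omega_\epsilon\|_2^2 = 32\pi^2/\alpha_0$, projected onto $\mathcal{N}_\gamma$, yields the complementary bound $m_\gamma\leq 16\pi^2/\alpha_0$. Together these force $\|\Delta u\|_2^2\leq 32\pi^2/\alpha_0$, and the very definition of $\gamma^*$ then gives $\gamma = 2\int F(u)/\|u\|_2^2 \leq \gamma^*$, contradicting $\gamma>\gamma^*$. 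The asymptotic equivalence $\gamma^*=+\infty \Leftrightarrow t^2F(t)/\exp(\alpha_0 t^2)\to +\infty$ is recovered by evaluating the ratio defining $\gamma^*$ on a sharp Adams-Moser concentrating family and tracking whether it blows up.

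The main obstacle is the compactness step in the existence argument: the Fourier rearrangement of \cite{chenlu2} is unavailable (it requires $f$ odd with non-negative Taylor coefficients for $F$), and Schwarz symmetrization is incompatible with the biharmonic operator, so there is no way to force the minimizing sequence to be radial. One must instead interlock (\ref{Adams entire space}), the strict level bound $m_\gamma<16\pi^2/\alpha_0$, the (A-R) condition, and the exponential-type estimate (\ref{xin1}) tightly enough to pass to the limit in the nonlinearity after a single translation, ruling out both vanishing and splitting of the Cerami sequence into multiple profiles.
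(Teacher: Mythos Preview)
Your overall architecture is right, and your nonexistence argument is actually cleaner than the paper's: once you know $m_\gamma\le 16\pi^2/\alpha_0$ and a ground state $u$ satisfies Pohozaev, you get $\|\Delta u\|_2^2\le 32\pi^2/\alpha_0$ and hence $\gamma=2\int F(u)/\|u\|_2^2\le\gamma^*$ directly from the definition of $\gamma^*$. The paper instead proves the equivalence $m_\gamma<16\pi^2/\alpha_0\Leftrightarrow\gamma<\gamma^*$ and then argues that a ground state at some $\gamma_0>\gamma^*$ would force $m_\gamma<m_{\gamma_0}=16\pi^2/\alpha_0$ for all $\gamma\in(\gamma^*,\gamma_0)$.

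There is, however, a genuine gap in your derivation of the strict level bound $m_\gamma<16\pi^2/\alpha_0$. You pick $u_0$ with $\|\Delta u_0\|_2^2\le 32\pi^2/\alpha_0$ and $2\int F(u_0)>\gamma\|u_0\|_2^2$, then project onto $\mathcal{N}_\gamma$ by scalar multiplication $t_*u_0$. But under (v) the fibering map $t\mapsto J_\gamma(tu_0)$ attains its \emph{maximum} at $t_*$, so $J_\gamma(t_*u_0)\ge J_\gamma(u_0)$; the information $J_\gamma(u_0)<\tfrac12\|\Delta u_0\|_2^2$ gives you nothing about $J_\gamma(t_*u_0)$. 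Moreover there is no reason that $t_*\le 1$: the hypothesis $\gamma\|u_0\|_2^2<2\int F(u_0)\le\int f(u_0)u_0$ does not control $\|\Delta u_0\|_2^2+\gamma\|u_0\|_2^2-\int f(u_0)u_0$. The paper circumvents this by working with the Pohozaev manifold $\mathcal{P}_\gamma=\{\gamma\|u\|_2^2=2\int F(u)\}$ instead. On $\mathcal{P}_\gamma$ one has exactly $I_\gamma(u)=\tfrac12\|\Delta u\|_2^2$, and the scalar projection $s_0u_0\in\mathcal{P}_\gamma$ does satisfy $s_0<1$ (since $h(s)=\gamma s^2\|u_0\|_2^2-2\int F(su_0)$ has $h(1)<0$ and $h(0^+)>0$), giving $M_p:=\inf_{\mathcal{P}_\gamma}I_\gamma\le\tfrac{s_0^2}{2}\|\Delta u_0\|_2^2<16\pi^2/\alpha_0$. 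The link $m_\gamma\le M_p$ is then supplied by the dilation invariance $\|\Delta(u(\lambda\,\cdot))\|_2=\|\Delta u\|_2$ in $\mathbb{R}^4$: any $u\in\mathcal{P}_\gamma$ can be dilated into $\mathcal{P}_\gamma\cap\mathcal{N}_\gamma$ without changing $I_\gamma$. This two-step detour through $\mathcal{P}_\gamma$ is precisely the missing ingredient in your sketch; your scalar projection onto $\mathcal{N}_\gamma$ cannot replace it.

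A smaller remark: your compactness step (``Brezis--Lieb splitting \dots\ ruling out splitting into multiple profiles'') is in the right spirit, but the paper's implementation is a specific near/far cutoff dichotomy (their Lemma~\ref{lem3.4}) showing that $\int f(u_k)u_k$ cannot split between $B_L$ and $\mathbb{R}^4\setminus B_L$; this uses the monotonicity of $f(t)t-2F(t)$ from (v) and is not a straightforward Brezis--Lieb argument.
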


The above theorem reveals an interesting relation between an Adams type inequality and the nonexistence of a ground-state solution of bi-harmonic equation with the critical exponential growth. In fact, as an immediate consequence of Theorem \ref{thm1}, we can conclude the following
\begin{corollary}Assume that $f$ satisfies $f(0)=0$ and the conditions
 (i), (ii), (iii) and (iv)  and $F(t)=\int_{0}^{t}f(s)ds$. Then
 the following Adams type inequality $$\sup_{\|\Delta u\|_2\le \frac{32\pi^2}{\alpha_0}}\frac{\int_{\mathbb{R}^{4}}F(u)dx}{\int_{\mathbb{R}^{4}}|u(x)|^{2}dx}
\leq C $$ holds for some $0<C<\infty$ if and only if there exists some $\beta^*\in \mathbb{R}$ such that for $\beta>\beta^*$,
$(-\Delta)^2 u+\beta u=f(u)$ does not admit any ground-state solution.
\end{corollary}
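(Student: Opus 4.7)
The plan is to recognize this corollary as a direct bookkeeping consequence of Theorem \ref{thm1}: the Adams-type ratio in the corollary is essentially $\gamma^{*}/2$, so its finiteness coincides with $\gamma^{*}<+\infty$, and Theorem \ref{thm1} already supplies the threshold characterization of (non)existence of ground-state solutions in terms of $\gamma^{*}$.

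First, I would observe that by the definition of $\gamma^{*}$ given in Theorem \ref{thm1},
\[
\gamma^{*}=2\sup\left\{\frac{\int_{\mathbb{R}^{4}}F(u)\,dx}{\int_{\mathbb{R}^{4}}|u|^{2}\,dx}\,:\,u\in W^{2,2}(\mathbb{R}^{4})\setminus\{0\},\ \|\Delta u\|_{2}^{2}\le\frac{32\pi^{2}}{\alpha_{0}}\right\}.
\]
Consequently, the Adams-type inequality displayed in the corollary holds for some finite $C>0$ if and only if $\gamma^{*}<+\infty$, with the smallest admissible constant being $C=\gamma^{*}/2$.

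For the forward implication, assuming the Adams-type inequality holds yields $\gamma^{*}<+\infty$, and I would simply set $\beta^{*}:=\gamma^{*}$; the nonexistence clause of Theorem \ref{thm1} then guarantees that no ground-state solution of $(-\Delta)^{2}u+\beta u=f(u)$ exists for any $\beta>\beta^{*}$. For the converse, suppose some $\beta^{*}\in\mathbb{R}$ exists above which no ground-state solution is present, and assume for contradiction that $\gamma^{*}=+\infty$. Then the existence clause of Theorem \ref{thm1} supplies a ground-state solution at every $\gamma\in(0,+\infty)=(0,\gamma^{*})$; choosing $\gamma=\max(\beta^{*},0)+1>\beta^{*}$ produces a ground-state solution at a parameter strictly above $\beta^{*}$, a contradiction. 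Hence $\gamma^{*}<+\infty$, which by the first step is equivalent to the stated Adams-type inequality.

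Since Theorem \ref{thm1} does all of the heavy analytic lifting, I do not anticipate any genuine obstacle here; the only bookkeeping point is to absorb the factor $2$ (present in the definition of $\gamma^{*}$ but absent from the corollary's ratio) into the constant $C$, and to confirm that the constraint $\|\Delta u\|_{2}^{2}\le 32\pi^{2}/\alpha_{0}$ in the corollary matches the one used to define $\gamma^{*}$. Beyond these normalization checks, the corollary is essentially a reformulation of the threshold behaviour already established in Theorem \ref{thm1}.
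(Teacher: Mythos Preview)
Your proposal is correct and matches the paper's approach: the paper does not give a separate proof of this corollary but states it as ``an immediate consequence of Theorem \ref{thm1},'' which is exactly the bookkeeping you carry out. Your observation about the factor $2$ and the constraint normalization is the only content needed beyond invoking the existence/nonexistence dichotomy of Theorem \ref{thm1}.
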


As we mentioned before, the loss of compactness for equations (\ref{potential eq}) may be produced not only by the concentration phenomena but also by the vanishing phenomena. In the literature, in order to exclude the vanishing phenomena, one can introduce the coercive potential (see \cite{zhao,Yang,Lamlu}), or apply some symmetrization  argument (see \cite{chenlu2,chenluzhang,bao,Alves, IMN, Rufs,MS2}). However, for our bi-harmonic equation (\ref{3.1}), the symmetrization argument fails, since the nonlinearity $f(t)$ in Theorem \ref{thm1} needn't be an odd function. Hence, neither the Schwarz symmetrization principle  nor the Fourier rearrangement used in \cite{chenlu2} can be applied to prove Theorem \ref{thm1}. For this reason, we will explore the relationship between the Nehari manifold and Pohozaev manifold, and develop a rearrangement-free approach to exclude the vanishing phenomena (see Lemma \ref{lem3.4} in Section 3). This rearrangement-free approach has it's own interests and can be used in the settings where the symmetrization technique does not work.
\vskip0.1cm

In the recent work \cite{chenlu1}, the authors of this paper established the existence of ground-state solution for the following  Schrodinger equation involving the degenerate Rabinowitz potential:
\begin{equation*}
-\Delta u+V(x)u=f(u)\ \text{in}\ \mathbb{R}^{2},
\end{equation*}
where $V(x)\geq 0$ and may vanishes on an open set of $\mathbb{R}^2$ and $f$ has the critical exponential growth. This is the first existence result for elliptic equation involving critical exponential growth without standard potential assumption: having the positive lower bound.
More recently, the authors established in \cite{chenlu3} the existence of ground state solutions to the following quasilinear Schr\"{o}dinger equation with the degenerate potential $V$:\begin{equation}\label{con}\begin{cases}
-\text{div}(|\nabla u|^{n-2}\nabla u) +V(x)|u|^{n-2}u=f(u)\ \text{in}\ \mathbb{R}^{n},\\
u\in W^{1,n}(\mathbb{R}^n).
\end{cases}\end{equation}

Motivated by the works in \cite{chenlu1} and \cite{chenlu3}, we are interested to study the existence of  ground state
solutions to the following bi-harmonic  equation:
\begin{equation}\label{degen}
\left(  -\Delta\right)  ^{2}u+V(x)u=f(u),\ \ x\in \mathbb{R}^4,%
\end{equation}
where $f(t)$ satisfies $f(0)=0$ and the conditions (i)-(v), and
the potential $V\left(  x\right)\geq 0$ satisfies
\vskip0.1cm

(V1) $V(x)=0$ at $B_{\delta}(0)$ and $V(x)\geq c_0$ in $\mathbb{R}^4\setminus B_{2\delta}(0)$ for some $c_0, \delta>0$,
\vskip0.1cm

(V2)
\[
\sup_{x\in\mathbb{R}^{4}}V(x)=\lim_{|x|\rightarrow\infty}V(x)=V_{\infty}>0.
\]

To this end, we first need to establish the following sharp critical Adams inequality involving the degenerate potential $V(x)$.

\begin{theorem}\label{Ad}
Assume that the potential $V(x)$ satisfies the condition $(V1)$, then
\begin{equation}\label{AdamsInequality}
\sup_{u\in W^{2,2}(\mathbb{R}^{4}), \int_{\mathbb{R}^4}(|\Delta u|^2+V(x)u^2)dx\leq1}\int_{\mathbb{R}^{4}}\big(e^{32\pi^2 u^2}-1\big)dx<\infty.
\end{equation}
\end{theorem}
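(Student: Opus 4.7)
The plan is to prove the inequality in two stages: first establish a Poincaré-type embedding controlling $\|u\|_{L^2(\mathbb{R}^4)}$ by the twisted norm $\|u\|_V^2 := \|\Delta u\|_2^2 + \int V u^2\,dx$, then reduce to the sharp Adams inequalities already at our disposal via a cut-off decomposition and concentration-compactness argument.

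First I would show that there is $C_0 = C_0(\delta, c_0) > 0$ with
\[
\|u\|_{L^2(\mathbb{R}^4)}^2 \leq C_0 \Bigl(\|\Delta u\|_2^2 + \int_{\mathbb{R}^4} V(x) u^2\,dx\Bigr),\qquad u \in W^{2,2}(\mathbb{R}^4).
\]
If no such $C_0$ existed, there would be a sequence $\{u_n\}$ with $\|u_n\|_2 = 1$ and $\|\Delta u_n\|_2^2 + \int V u_n^2 \to 0$. Since $V \geq c_0$ on $B_{2\delta}^c$ by (V1), $\|u_n\|_{L^2(B_{2\delta}^c)} \to 0$, so $\{u_n\}$ is bounded in $W^{2,2}(\mathbb{R}^4)$; up to a subsequence $u_n \rightharpoonup u$ in $W^{2,2}$ and $u_n \to u$ in $L^2_{\mathrm{loc}}$ by Rellich. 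The limit $u$ is harmonic on $\mathbb{R}^4$ (since $\Delta u_n \to 0$ in $L^2$) and lies in $L^2(\mathbb{R}^4)$, so the mean value property yields $|u(x_0)| \leq |B_R(x_0)|^{-1/2}\|u\|_2 \to 0$ as $R \to \infty$, hence $u \equiv 0$, contradicting $\|u\|_{L^2(B_{2\delta})} = \lim \|u_n\|_{L^2(B_{2\delta})} = 1$.

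Next, pick $\eta \in C_c^\infty(\mathbb{R}^4)$ with $\eta = 1$ on $B_{2\delta}$ and $\mathrm{supp}\,\eta \subset B_{3\delta}$, so that $\nabla\eta,\Delta\eta$ are supported in the annulus $A = B_{3\delta}\setminus B_{2\delta} \subset B_{2\delta}^c$, where $V \geq c_0$. Split $u = v_1 + v_2$ with $v_1 = \eta u \in W_0^{2,2}(B_{3\delta})$ and $v_2 = (1-\eta)u$ supported in $B_{2\delta}^c$. The Poincaré step, the interpolation $\|\nabla u\|_2^2 \leq \|u\|_2\|\Delta u\|_2$, and $\|u\|_{L^2(A)}^2 \leq c_0^{-1}\int V u^2$ together bound $\|\Delta v_1\|_2$, $\|\Delta v_2\|_2$, and $\|v_2\|_2$ by constants depending only on $(\delta, c_0)$. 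Adams' sharp inequality for $W_0^{2,2}(B_{3\delta})$ then controls $\int (e^{32\pi^2 v_1^2}-1)\,dx$, and the sharp $\mathbb{R}^4$-Adams inequality under the $\|\Delta u\|_2^2 + \tau\|u\|_2^2 \leq 1$ constraint (equation (1.13)) controls $\int (e^{32\pi^2 v_2^2}-1)\,dx$.

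The main obstacle is piecing these estimates together without degrading the exponent $32\pi^2$, since a naive $(v_1+v_2)^2 \leq (1+\varepsilon)v_1^2 + C_\varepsilon v_2^2$ would shrink the constant. I would handle this by a concentration-compactness analysis on a putative blow-up sequence $\{u_n\}$ with $\|u_n\|_V \leq 1$ and $\int(e^{32\pi^2 u_n^2}-1)\,dx \to \infty$: $W^{2,2}$-boundedness (from Step~1) gives a weak limit and a measure limit $|\Delta u_n|^2\,dx \rightharpoonup |\Delta u_\infty|^2\,dx + \sum_i \nu_i\delta_{x_i}$. If there are no atoms, $L^p_{\mathrm{loc}}$-strong convergence plus a Vitali argument contradicts the blow-up. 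At an atom $x_i \in B_{2\delta}$, the concentration sits inside $\mathrm{supp}\,\eta$ and bounded-domain sharp Adams on $B_{3\delta}$ closes the case with constant $32\pi^2$; at $x_i \notin B_{2\delta}$, the positivity $V(x_i) \geq c_0$ reduces matters to the classical sharp $\mathbb{R}^4$-Adams inequality. Because for a concentrating sequence the cut-off errors become asymptotically negligible ($\|\Delta v_{j,n}\|_2 \to \|\Delta u_n\|_2$ and $\int V v_{2,n}^2 \to 0$), the sharp exponent $32\pi^2$ survives.
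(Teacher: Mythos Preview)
Your Poincar\'e embedding is correct (the paper proves the same lemma by a direct cut-off computation rather than contradiction, but your Liouville argument for $L^2$ harmonic functions is fine). The main argument, however, is quite different from the paper's and, as written, has a real gap. The paper never uses concentration--compactness; it proceeds by the Lam--Lu level-set trick. Set $A := \bigl(\int V u^2\bigr)^{1/2}\leq 1$ and split $\mathbb{R}^4$ into $\Omega^+=\{u>A\}$, $\Omega^-=\{-u>A\}$, and $\{|u|\leq A\}$. On the last set $|u|<1$, so the exponential is controlled by $\|u\|_2^2$ (bounded by your Step~1). The sets $\Omega^\pm$ have measure bounded by $|B_{2\delta}|+c_0^{-1}$ (Chebyshev plus (V1)); on $\Omega^+$ put $v=u-A$, so $v|_{\partial\Omega^+}=0$ and $\|\Delta v\|_{L^2(\Omega^+)}^2\leq\|\Delta u\|_2^2\leq 1-A^2$. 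The elementary inequality $u^2\leq(1+A^2)v^2+1+A^2$ then gives $\|\Delta((1+A^2)^{1/2}v)\|_2^2\leq(1+A^2)(1-A^2)\leq 1$, so the Navier-boundary Adams inequality on the bounded set $\Omega^+$ applies with exponent exactly $32\pi^2$. No limiting procedure is needed; the constraint is preserved \emph{exactly} through the identity $(1+A^2)(1-A^2)\leq 1$.

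Your concentration--compactness route, by contrast, leaves genuine work undone at precisely the point you flag as the obstacle. In the no-atom case you invoke ``$L^p_{\mathrm{loc}}$ convergence plus Vitali,'' but Vitali requires local uniform integrability of $e^{32\pi^2 u_n^2}$, and the absence of atoms only yields $\limsup_n\int_{B_r(x_0)}|\Delta u_n|^2<1$; converting that into local uniform integrability of the \emph{critical} exponential for a fourth-order operator is itself a nontrivial Adams-type lemma you have not supplied (and which does not follow from mere $L^p_{\mathrm{loc}}$ convergence). You also never treat the tails: nothing rules out the blow-up escaping to infinity, and your piece $v_2=(1-\eta)u$ satisfies only $\|\Delta v_2\|_2^2+c_0\|v_2\|_2^2\leq C(\delta,c_0)$ with a constant that can exceed $1$, so the sharp $\mathbb{R}^4$-Adams inequality is not directly available for it. Finally, your handling of an atom on the annulus $B_{3\delta}\setminus B_{2\delta}$, where both $\nabla\eta$ is nonzero and you wish to route through the $v_2$ branch, is not addressed; the claim that ``cut-off errors become asymptotically negligible'' there is unjustified. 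The level-set decomposition in the paper sidesteps all of these issues at once.
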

\begin{remark}
It should be noted that the loss of a positive lower bound of the potential $V(x)$ makes this problem become fairly complicated and classical methods such as symmetrization argument and  blow-up analysis fail in dealing with this problem. Furthermore, because $\|\Delta (|u|)\|_{L^{2}(\mathbb{R}^4)}\leq \|\Delta (u)\|_{L^{2}(\mathbb{R}^4)}$ does not hold, it is not sufficient to only prove that this inequality \eqref{AdamsInequality}  holds for all positive functions in Sobolev space $W^{2,2}(\mathbb{R}^4)$. We will improve classical rearrangement-free argument developed by Lam and Lu in \cite{LamLu-AIM,LamLu-JDE2012} to overcome this difficulty.
\end{remark}

Based on Theorems \ref{thm1} and \ref{Ad}, by exploring the relationship between the Nehari manifold and the corresponding limiting Nehari manifold, we can obtain the following result.

\begin{theorem}\label{thm3}
 Assume that $V\left(  x\right)\geq 0$ is a continuous
function satisfying the condition $(V1)$ and $(V2)$, then for any  $V_{\infty}\in(0,\gamma^{\ast})$, the equation
\eqref{degen} admits a ground-state solution, where $\gamma^{\ast}$ is defined as in Theorem \ref{thm1}.
\end{theorem}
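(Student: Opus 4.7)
The plan is to run a variational analysis in the Hilbert space $E_V := \{u \in W^{2,2}(\mathbb{R}^4) : \int_{\mathbb{R}^4} V(x) u^2\,dx < \infty\}$ equipped with the norm $\|u\|_V^2 := \int_{\mathbb{R}^4}(|\Delta u|^2 + V(x) u^2)\,dx$, where Theorem \ref{Ad} ensures that the exponential integrals appearing in the energy
\begin{equation*}
I_V(u) = \tfrac{1}{2}\|u\|_V^2 - \int_{\mathbb{R}^4} F(u)\,dx
\end{equation*}
are well defined and that $I_V \in C^1(E_V)$. The growth bound \eqref{xin1} together with conditions (i), (ii) and (iv) yields the mountain pass geometry, while condition (v) guarantees that for every $u \in E_V\setminus\{0\}$ there is a unique $t(u) > 0$ with $t(u)u \in \mathcal{N}_V := \{w \neq 0 : \langle I_V'(w), w\rangle = 0\}$, so that the ground-state level admits the familiar identification
\begin{equation*}
m_V := \inf_{u \in \mathcal{N}_V} I_V(u) = \inf_{u \neq 0}\,\max_{t>0} I_V(tu).
\end{equation*}

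The decisive step is the strict comparison $m_V < m_\infty$, where $m_\infty$ is the ground-state level for the limiting equation $(-\Delta)^2 u + V_\infty u = f(u)$ in $\mathbb{R}^4$. Because $V_\infty \in (0,\gamma^\ast)$, Theorem \ref{thm1} produces a ground state $u_\infty$ with $I_\infty(u_\infty) = m_\infty$. Since $V(x) \leq V_\infty$ everywhere with $V \equiv 0$ on $B_\delta(0)$, the identity
\begin{equation*}
I_V(tu_\infty) = I_\infty(tu_\infty) - \frac{t^2}{2}\int_{\mathbb{R}^4}(V_\infty - V(x))u_\infty^2\,dx
\end{equation*}
holds, and the second term is strictly positive for every $t > 0$. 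Letting $t_V > 0$ be the Nehari projection with $t_V u_\infty \in \mathcal{N}_V$ and using $\max_{t>0} I_\infty(tu_\infty) = m_\infty$, one deduces $m_V \leq I_V(t_V u_\infty) < I_\infty(t_V u_\infty) \leq m_\infty$.

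Next I would take a bounded Cerami sequence $(u_n)\subset E_V$ at level $m_V$, with boundedness supplied by (ii), and study its weak limit $u_0 \in E_V$. Using (iv), \eqref{xin1}, and Theorem \ref{Ad}, a standard Lions-type argument shows that $u_0$ is a critical point of $I_V$. The essential non-triviality step is to rule out $u_0 \equiv 0$. Concentration of $|\Delta u_n|^2$ at a point is excluded by verifying that $m_V$ stays below the Adams compactness threshold $\tfrac{16\pi^2}{\alpha_0}$, an estimate which comes from inserting $u_\infty$ as a competitor in the Nehari infimum and invoking the very definition of $\gamma^\ast$ together with $V_\infty < \gamma^\ast$. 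Loss of mass by translation to infinity is handled by a profile decomposition: if $u_n(\cdot + y_n) \rightharpoonup \tilde u$ with $|y_n| \to \infty$, then by (V2) the limiting equation at infinity is $(-\Delta)^2\tilde u + V_\infty\tilde u = f(\tilde u)$, forcing $\tilde u \neq 0$ and $I_\infty(\tilde u) \leq m_V$, which contradicts $m_V < m_\infty$. Once $u_0 \neq 0$, Fatou combined with the monotonicity of $f(t)/t$ in (v) forces $I_V(u_0) = m_V$, and $u_0$ is the desired ground state.

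The main obstacle I anticipate is precisely the non-triviality of the weak limit: the degeneracy of $V$ destroys the weighted compact embedding into $L^2$ that normally kills the translation-to-infinity alternative, while the exponential nonlinearity prevents the usual Brezis--Lieb argument from being applied routinely. This is where the sharp degenerate Adams inequality of Theorem \ref{Ad} and the strict comparison $m_V < m_\infty$ must work in tandem; neither the Schwarz symmetrization nor the Fourier rearrangement can be invoked, because $f$ need not be odd and $u_n$ need not be radial, so the rearrangement-free argument of Section 3 for Theorem \ref{thm1} has to be adapted to the weighted setting.
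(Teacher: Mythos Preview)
Your proposal is correct and shares with the paper the two decisive ingredients: the strict comparison $m_V < m_\infty$, obtained by testing the Nehari infimum with the ground state $u_\infty$ supplied by Theorem~\ref{thm1}, and the sub-threshold estimate $m_V < 16\pi^2/\alpha_0$. The routes diverge in the compactness analysis. You run a Cerami sequence and a profile decomposition: if the weak limit vanishes, a translated limit $\tilde u\neq 0$ solves the limiting problem, so $m_\infty \leq I_\infty(\tilde u)$, and Fatou on $\tfrac12\int(f(\cdot)\cdot - 2F(\cdot))$ yields $I_\infty(\tilde u)\leq m_V$, a contradiction. The paper instead works with a plain minimizing sequence $\{u_k\}\subset\mathcal N_V$ (no Palais--Smale structure) and argues more elementarily in Lemma~\ref{lem4.3}: assuming $u=0$, it projects each $u_k$ onto $\mathcal N_\infty$ by a scalar $t_k\geq 1$, shows $t_k\to 1$ directly from condition (v) and the fact that $\int(V_\infty-V)|u_k|^2\to 0$, and concludes $m_\infty \leq \lim I_\infty(t_ku_k)=m_V$. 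It then establishes convergence of the nonlinear terms via a cut-off dichotomy (Lemma~\ref{lem4.4}) and the concentration-compactness form of Adams' inequality (Lemma~\ref{lem4.6}), finally showing $u\in\mathcal N_V$ with $I_V(u)=m_V$. Your approach is conceptually cleaner but relies on energy-splitting lemmas for exponential nonlinearities that are somewhat delicate; the paper's approach is more hands-on, never passes to the limit in the equation itself, and makes the role of the monotonicity of $f(t)/t$ in ruling out vanishing completely explicit.
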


This paper is organized as follows. Section 3 is devoted to the proofs of the existence and nonexistence of a ground-state solution to bi-harmonic equation \eqref{3.1} for general critical exponential growth.
In Section 4, we will prove the crtitical Adams inequalities involving degenerate potential.
In Section 5, we will prove the existence of ground state solutions for the
bi-harmonic equation \eqref{degen} with the degenerate Rabinowitz type potential.

Throughout this paper, the letter $c$ always denotes some positive constant
which may vary from line to line.

\section{Existence and nonexistence of a ground-state solution of bi-harmonic equations with constant potentials: Proof of Theorem \ref{thm1}}

In this section, we are concerned with the ground states of the following quasilinear bi-harmonic equation (\ref{3.1}) with the constant potential and the nonlinearity $f(t)$ satisfying (i)-(iii). Namely, we will prove Theorem \ref{thm1}.

\medskip
The associated functional and Nehari Manifold are
$$I_{\gamma}(u)=\frac{1}{2}\int_{\mathbb{R}^{4}}\left(  \left \vert \Delta
u\right\vert ^{2}+\gamma|u|^2\right)
dx-\int_{\mathbb{R}^{4}}F(u)dx,\ \
\mathcal{N}_{\gamma}=\left\{  \left.  u\in W^{2,2}\left(  \mathbb{R}^{4}\right)
\right\vert u\neq0,N_\gamma\left(  u\right)  =0\right\}$$
respectively, where
$$N_\gamma\left(  u\right)  =\int_{\mathbb{R}^{4}}\left(  \left\vert \Delta
u\right\vert ^{2}+\gamma|u|^2\right)
dx-\int_{\mathbb{R}^{4}}f(u)udx.$$
One can easily verify that if $u\in\mathcal{N}_{\gamma}$, then
$$I_\gamma(u)=\frac{1}{2}\int_{\mathbb{R}^4}(f(u)u-2F(u))dx.$$
In the following, we will denote the Sobolev norms by
$$\|u\|_{W_\gamma^{2,2}(\mathbb{R}^4)}=\left(\int_{\mathbb{R}^{4}}\left(  \left \vert \Delta
u\right\vert ^{2}+\gamma|u|^2\right)dx\right)^{1/2}$$ and $$\|u\|_{W_V^{2,2}(\mathbb{R}^4)}=\left(\int_{\mathbb{R}^{4}}\left(  \left \vert \Delta
u\right\vert ^{2}+V(x)|u|^2\right)dx\right)^{1/2},$$
  respectively.

\medskip
We first claim

\begin{lemma}\label{6.1}
For any $u\in W^{2,2}\left(  \mathbb{R}^{4}\right) \setminus \{0\}$, there exists a
$t_{u}>0$ such that $t_{u}u\in\mathcal{N}_{\gamma}$.
\end{lemma}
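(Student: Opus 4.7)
I would parametrize along the ray $\{tu : t \geq 0\}$ via the fibering function
$\psi(t) := I_{\gamma}(tu) = \tfrac{t^{2}}{2}\,\|u\|^{2}_{W_\gamma^{2,2}(\mathbb{R}^4)} - \int_{\mathbb{R}^{4}} F(tu)\,dx$,
and use the identity $t\,\psi'(t) = N_{\gamma}(tu)$ for $t > 0$. Consequently any positive critical point $t_{u}$ of $\psi$ automatically satisfies $t_{u}u \in \mathcal{N}_{\gamma}$. It therefore suffices to show that $\psi$ admits a positive maximum on $(0,+\infty)$, which I will establish by proving $\psi(0)=0$, $\psi(t)>0$ for small $t>0$, and $\psi(t)\to-\infty$ as $t\to+\infty$, and then invoking continuity.

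For the behavior near $0$, I would use Remark~\ref{adrem}: (ii) gives $F(s)=o(s^{2})$ as $s\to 0$, while the growth bound \eqref{xin1} integrated once yields, for each $\varepsilon>0$ and each $\beta_{0}>\alpha_{0}$, $|F(s)|\le\tfrac{\varepsilon}{2}s^{2}+C_{\varepsilon}|s|^{\mu+1}\bigl(e^{\beta_{0}s^{2}}-1\bigr)$. Substituting $s = tu(x)$ and integrating, H\"older's inequality combined with the sharp Adams inequality \eqref{Adams entire space} (applied to $u/\|u\|_{W^{2,2}}$) and the Sobolev embedding $W^{2,2}(\mathbb{R}^{4})\hookrightarrow L^{p}(\mathbb{R}^{4})$ for every $p\in[2,\infty)$ yield $\int_{\mathbb{R}^{4}} F(tu)\,dx \le \varepsilon\,t^{2}\|u\|_{2}^{2} + C(u,\varepsilon)\,t^{\mu+1}$, valid for all $t$ so small that $\beta_{0}\,t^{2}\,\|u\|^{2}_{W^{2,2}}$ lies below the Adams threshold $32\pi^{2}$. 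Choosing $\varepsilon$ with $\varepsilon\|u\|_{2}^{2}<\tfrac14\|u\|^{2}_{W_\gamma^{2,2}}$ then gives $\psi(t) \ge \tfrac{t^{2}}{4}\|u\|^{2}_{W_\gamma^{2,2}} - C\,t^{\mu+1} > 0$ for all sufficiently small $t>0$.

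For the behavior as $t\to+\infty$, I would use (ii) once more: since $(F(s)/|s|^{\mu})$ is non-decreasing on each of $(0,\infty)$ and $(-\infty,0)$, there exist constants $c_{0},s_{0}>0$ with $F(s)\ge c_{0}|s|^{\mu}$ whenever $|s|\ge s_{0}$. Because $u\not\equiv 0$, one can fix $a>0$ such that $A_{a}:=\{x : |u(x)|\ge a\}$ has positive Lebesgue measure, and Sobolev embedding gives $u\in L^{\mu}(\mathbb{R}^{4})$. For every $t\ge s_{0}/a$ one has $|tu|\ge s_{0}$ on $A_{a}$, so $\int_{\mathbb{R}^{4}} F(tu)\,dx \ge c_{0}\,t^{\mu}\!\int_{A_{a}} |u|^{\mu}\,dx =: c_{1}\,t^{\mu}$ with $c_{1}>0$. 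Since $\mu>2$ this forces $\psi(t) \le \tfrac{t^{2}}{2}\|u\|^{2}_{W_\gamma^{2,2}} - c_{1}t^{\mu} \to -\infty$. Continuity of $\psi$ then produces a positive interior maximum at some $t_{u}\in(0,+\infty)$, and $t_{u}\,\psi'(t_{u}) = N_{\gamma}(t_{u}u) = 0$ delivers the conclusion.

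The main technical nuisance I anticipate is that $\int_{\mathbb{R}^{4}} F(tu)\,dx$ may cease to be finite once $\beta_{0}\,t^{2}\,\|u\|^{2}_{W^{2,2}}$ exceeds the Adams threshold, so $\psi$ need not a priori be defined on all of $[0,+\infty)$. This is handled by restricting the argument to the maximal interval $[0,T_{0})$ on which $\psi$ is finite; if $T_{0}<+\infty$, monotone convergence forces $\psi(t)\to-\infty$ as $t\uparrow T_{0}$, while if $T_{0}=+\infty$ the Ambrosetti--Rabinowitz estimate above applies, and in either case a positive maximum is attained inside $(0,T_{0})$ and the proof concludes exactly as stated.
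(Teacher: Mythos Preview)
Your proof is correct and takes essentially the same approach as the paper: both analyze the ray $t\mapsto tu$ and use the small-$t$ behavior (via $f(t)=o(t)$ and the growth estimate \eqref{xin1}) together with the large-$t$ behavior (via critical growth or A--R) to locate a point on $\mathcal{N}_{\gamma}$. The paper argues directly with the sign change of $N_{\gamma}(tu)$ in two lines, while you use the equivalent fibering-map formulation $\psi(t)=I_{\gamma}(tu)$ and fill in much more detail; your final paragraph on possible non-finiteness of $\psi$ is unnecessary (for each fixed $u\in W^{2,2}(\mathbb{R}^{4})$ one has $\int_{\mathbb{R}^{4}}(e^{\alpha u^{2}}-1)\,dx<\infty$ for every $\alpha>0$, by density of $C_{c}^{\infty}$ and the Adams inequality), but it does no harm.
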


\begin{proof}
For any $u\in W^{2,2}\left(  \mathbb{R}^{4}\right)$, we have
\begin{equation}\begin{split}
N_{\gamma}(tu)=t^2\int_{\mathbb{R}^4}(|\Delta u|^2+\gamma|u|^2)dx-\int_{\mathbb{R}^4}f(tu)(tu)dx.
\end{split}\end{equation}
Obviously, from the expression of $N(tu)$ and the conditions (i) and (iv), it is not hard to find that
$N(tu)<0$ for large $t$ and $N(tu)>0$ for small $t$.
Hence, there exists a  $t_{u}>0$ such that
$t_{u}u\in\mathcal{N}_\gamma$.
\end{proof}
\medskip
We recall that a solution $u$ of (\ref{3.1})
is called a ground state if $$I_{\gamma}(u)= \inf\{I_{\gamma}(w)\ |\ w \neq 0,\  w\ \mathrm{is\ a\ weak\ solution\ of}%
\ \text{(\ref{3.1})}\}.$$  Set $m_{\gamma}=\inf \{I_\gamma(u)\ |\ N_\gamma(u)=0\}$, then the existence of ground-state solution of equation \eqref{3.1} is equivalent to the attainability of $m_{\gamma}$. We claim
\begin{lemma}
\label{upper}There holds
\begin{equation}
0<m_{\gamma}\leq \frac{16\pi^2}{\alpha_0}. \label{the rang}%
\end{equation}

\end{lemma}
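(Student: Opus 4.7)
The plan is to establish the two inequalities separately: the positivity $m_\gamma > 0$ by a standard Nehari-manifold estimate, and the sharp ceiling $m_\gamma \leq 16\pi^2/\alpha_0$ by testing against a concentrating Adams--Moser family.

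\textbf{Lower bound.} For $u \in \mathcal{N}_\gamma$ the Nehari identity $\|u\|_{W_\gamma^{2,2}(\mathbb{R}^4)}^{2} = \int_{\mathbb{R}^4} f(u) u\,dx$ holds. Plugging in the growth estimate \eqref{xin1} from Remark \ref{adrem}, applying H\"older's inequality, the Sobolev embedding $W^{2,2}(\mathbb{R}^4)\hookrightarrow L^p(\mathbb{R}^4)$ for $p\geq 2$, and the sharp Adams inequality \eqref{Adams entire space} to $u/\|\Delta u\|_{2}$ (legitimate once $\beta_0 \|\Delta u\|_{2}^{2} < 32\pi^2$, which holds automatically for $\|u\|_{W_\gamma^{2,2}(\mathbb{R}^4)}$ small), one gets
$$\|u\|_{W_\gamma^{2,2}(\mathbb{R}^4)}^{2} \leq \tfrac{\varepsilon}{\gamma}\|u\|_{W_\gamma^{2,2}(\mathbb{R}^4)}^{2} + C_\varepsilon\|u\|_{W_\gamma^{2,2}(\mathbb{R}^4)}^{\mu+1},$$
which yields a uniform positive lower bound $\|u\|_{W_\gamma^{2,2}(\mathbb{R}^4)} \geq \rho$ on $\mathcal{N}_\gamma$. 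Combined with the Ambrosetti--Rabinowitz condition (ii) this gives
$$I_\gamma(u) = \tfrac{1}{2}\int_{\mathbb{R}^4}\bigl(f(u)u - 2F(u)\bigr)\,dx \geq \tfrac{\mu-2}{2\mu}\|u\|_{W_\gamma^{2,2}(\mathbb{R}^4)}^{2} \geq \tfrac{\mu-2}{2\mu}\rho^{2} > 0,$$
so $m_\gamma > 0$.

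\textbf{Upper bound by concentration.} Since any $u \in W^{2,2}(\mathbb{R}^4)\setminus\{0\}$ admits $t_u > 0$ with $t_u u \in \mathcal{N}_\gamma$ by Lemma \ref{6.1}, it suffices to exhibit a test sequence along which the Nehari level stays below $16\pi^2/\alpha_0 + o(1)$. I take the standard radial Adams--Moser bumps $\{M_n\}\subset W^{2,2}(\mathbb{R}^4)$, supported in $B_1(0)$, with $\|\Delta M_n\|_{2}^{2} = 32\pi^2/\alpha_0$, $\|M_n\|_{2}^{2} \to 0$, and $\alpha_0 M_n(x)^2 \geq 4\log n$ for $x \in B_{1/n}(0)$. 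Choose $t_n > 0$ so that $t_n M_n \in \mathcal{N}_\gamma$; then $m_\gamma \leq I_\gamma(t_n M_n)$. The Nehari identity $t_n^2\|M_n\|_{W_\gamma^{2,2}(\mathbb{R}^4)}^{2} = \int f(t_n M_n)(t_n M_n)\,dx$ together with the AR condition shows $\{t_n\}$ is bounded. Arguing by contradiction, suppose $t_n^2 \geq 1+\delta$ along a subsequence. The critical growth lower bound $s f(s) \geq C_\varepsilon\bigl(e^{(\alpha_0-\varepsilon)s^2}-1\bigr)$ for $|s|$ large, applied on $B_{1/n}(0)$, gives
$$t_n^2\|M_n\|_{W_\gamma^{2,2}(\mathbb{R}^4)}^{2} \geq c\,n^{-4}\bigl(n^{4 t_n^2(1-\varepsilon/\alpha_0)} - 1\bigr),$$
whose right-hand side blows up once $\varepsilon > 0$ is small enough that $(1+\delta)(1-\varepsilon/\alpha_0) > 1$, contradicting boundedness of the left-hand side. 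Hence $\limsup_n t_n^2 \leq 1$, and using $F \geq 0$,
$$m_\gamma \leq I_\gamma(t_n M_n) \leq \tfrac{t_n^2}{2}\|M_n\|_{W_\gamma^{2,2}(\mathbb{R}^4)}^{2} = \tfrac{t_n^2}{2}\Bigl(\tfrac{32\pi^2}{\alpha_0}+o(1)\Bigr) \to \tfrac{16\pi^2}{\alpha_0}.$$

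\textbf{Main obstacle.} The decisive technical point is the explicit construction of the Adams--Moser profile $M_n$ in dimension four and the sharp verification that $\|\Delta M_n\|_{2}^{2} = 32\pi^2/\alpha_0$ holds simultaneously with the pointwise concentration bound $\alpha_0 M_n^2 \geq 4\log n$ on $B_{1/n}(0)$ and the decay $\|M_n\|_{2}^{2} \to 0$. This calibration is what makes the critical Adams constant $32\pi^2$ match the threshold $16\pi^2/\alpha_0$ exactly; the remaining estimates, in particular the blow-up of the concentrated integral against the $n^{-4}$ measure of the concentration ball, are by now standard once the profile is fixed.
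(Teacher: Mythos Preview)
Your lower bound argument is essentially the paper's: both feed the Nehari identity into the growth estimate \eqref{xin1} and the Adams inequality to force $\|u\|_{W_\gamma^{2,2}}$ away from zero, then invoke (A--R). The paper phrases it as a contradiction with a sequence $I_\gamma(u_k)\to 0$; you phrase it as a direct lower bound $\rho$ on the norm. Same content.

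Your upper bound, however, takes a genuinely different route. The paper does \emph{not} build concentrating test functions. Instead it argues abstractly: writing $m_\gamma = t_\gamma^2/2$ and using the monotonicity of $f(t)/t$ (condition (v)), it shows that for every $w$ with $\|w\|_{W_\gamma^{2,2}}=1$ one has $\int f(t_\gamma w)w/t_\gamma \leq 1$; finiteness of this supremum over the unit sphere, combined with the critical exponential growth of $f$ and the sharpness of the Adams inequality, forces $t_\gamma^2 \leq 32\pi^2/\alpha_0$. This is short and avoids any explicit profile, but it tacitly uses hypothesis (v), which is not listed among the assumptions of Theorem~\ref{thm1}. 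Your approach via the Adams--Moser bumps $M_n$ is more hands-on but relies only on the bare critical-growth definition (i), so in that sense it is more self-contained relative to the stated hypotheses.

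One small gap: your claim that ``the Nehari identity together with the AR condition shows $\{t_n\}$ is bounded'' is not justified. (A--R) yields $\int f(t_nM_n)t_nM_n \geq \mu\int F(t_nM_n) \geq c\,t_n^\mu \int F(M_n)$, but $\int F(M_n)\to 0$ for the concentrating sequence, so this does not give a uniform bound on $t_n$. Fortunately your own blow-up estimate already does the job: if $t_{n}\to\infty$ along a subsequence, the right side $c\,n^{4[t_n^2(1-\varepsilon/\alpha_0)-1]}$ grows super-polynomially while the left side $t_n^2\|M_n\|_{W_\gamma^{2,2}}^2\leq Ct_n^2$ is only quadratic in $t_n$; taking logarithms gives $4[t_n^2(1-\varepsilon/\alpha_0)-1]\log n \leq C+2\log t_n$, impossible for large $n$. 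So simply drop the appeal to (A--R) and let the concentration inequality handle both boundedness and $\limsup t_n^2\leq 1$ in one stroke.
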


\begin{proof}
We first show that $m_{\gamma}>0$.  We prove this by contradiction. Assume that there
exists some sequence $u_{k}\in$ $\mathcal{N}_\gamma$ such that $I_\gamma\left(
u_{k}\right)  \rightarrow0$, that is,%
\[
\lim\limits_{k\rightarrow+\infty}\int_{\mathbb{R}^{4}}\big(f(u_{k}%
)u_{k}-2F(u_{k})\big)dx=0,
\]
which together with (A-R) condition and $u_{k}\in\mathcal{N}_\gamma$ yields that
\begin{equation}
\lim_{k\rightarrow+\infty}\int_{\mathbb{R}^{4}}\left(|\Delta u_{k}|^{2}%
+|u_{k}|^{2}\right)dx=0.\label{t1c}%
\end{equation}
On one hand, it follows from \eqref{xin1} and $u_{k}\in\mathcal{N}_\gamma$ that
\begin{equation}%
\begin{split}
1&=\int_{\mathbb{R}^{4}}f(u_{k})\frac{u_{k}}{\Vert u_{k}\Vert^2_{W_\gamma^{2,2}(\mathbb{R}^4)}}dx\\
&\leq\int_{\mathbb{R}^{4}}\big(\varepsilon\frac{|u_{k}|^{2}}{\Vert
u_{k}\Vert^2_{W_\gamma^{2,2}(\mathbb{R}^4)}}+C_{\varepsilon}\frac{|u_{k}|^{\mu+1}}{\Vert u_{k}\Vert_{W_\gamma^{2,2}(\mathbb{R}^4)}%
^{2}}\left(  e^{\beta_{0}u_{k}^{2}}-1\right)\big)dx
\end{split}
\label{t2}%
\end{equation}

On the other hand, by \eqref{t1c},  Adams inequality (\ref{Adams entire space}) and the fact
that $\mu>2$, we get for any $p>1,$
\begin{align*}
&\frac{1}{\Vert u_{k}\Vert^2_{W_\gamma^{2,2}(\mathbb{R}^4)}}\int_{\mathbb{R}^{4}}|u_{k}|^{2}%
|u_{k}|^{\mu-2}e^{\beta_{0}u_{k}^{2}}dx  \\ &\leq\frac{1}{\Vert u_{k}%
\Vert^2_{W_\gamma^{2,2}(\mathbb{R}^4)}}\left(  \int_{\mathbb{R}^{2}}|u_{k}|^{(\mu+1) p}dx\right)
^{1/p}\left(  \int_{\mathbb{R}^{4}}\left(  e^{p^{\prime}\beta_{0}u_{k}^{2}%
}-1\right)  dx\right)  ^{1/p^{\prime}}\\
& \leq c\Vert u_{k}\Vert_{W_\gamma^{2,2}(\mathbb{R}^4)}^{\mu-1}\rightarrow0,\text{as }k\rightarrow
\infty.
\end{align*}
which is a contradiction with \eqref{t2}.
\medskip

Next, we prove that $m_{\gamma}\leq \frac{16\pi^2}{\alpha_0}$. Let $w\in W^{2,2}(\mathbb{R}^4)$ such that $\int_{\mathbb{R}^{4}}\left(  \left\vert
\Delta w\right\vert ^{2}+\left\vert w\right\vert
^{2}\right)  dx=1.$
Then there exists $t_{w}>0$ such that $$\int_{\mathbb{R}^4}\left(|\Delta(t_{w} w)|^2+(t_{w} w)^2-f(t_{w} w)(t_{w} w)\right)dx=0,$$
which implies that
\begin{equation}\begin{split}
m_\gamma&\leq \frac{1}{2}\int_{\mathbb{R}^4}\left(|\Delta(t_{w} w)|^2+(t_{w} w)^2\right)dx
-\int_{\mathbb{R}^4}F(t_{w} w)\\
&<\frac{t_{w}^2}{2}\int_{\mathbb{R}^4}\left(|\Delta w|^2+|w|^2\right)dx=\frac{t_{w}^2}{2}.
\end{split}\end{equation}
On the other hand, $\frac{f(t w)}{t}w$ is monotone increasing about the variable $t$. Set $m_{\gamma}=\frac{t_\gamma^2}{2}$, then we derive that
\begin{equation}\begin{split}
\int_{\mathbb{R}^4}\frac{f(t_{\gamma}w)}{t_{\gamma}}wdx&\leq \int_{\mathbb{R}^4}\frac{f(t_{w} w)}{t_w}wdx\\
&=\int_{\mathbb{R}^4}|\Delta w|^2+\gamma|w|^2dx=1,
\end{split}\end{equation}
which implies that $$\sup_{\int_{\mathbb{R}^4}(|\Delta w|^2+ |w|^2dx)\leq 1}\int_{\mathbb{R}^4}\frac{f(t_{\gamma}w)}{t_{\gamma}}wdx<\infty.$$
Since $f$ has the critical exponential growth, we derive $m_\gamma=\frac{t_{\gamma}^2}{2}\leq \frac{16\pi^2}{\alpha_0}$ by the critical Adams inequality which was established  in \cite{LaLu4}.
\end{proof}
Now, we introduce the Adams ratios:
\[
C_{A}^{L}=\sup\{\frac{2}{\left\Vert u\right\Vert _{2}^{2}}\int_{\mathbb{R}%
^{4}}F(u)|\ u\in W^{2,2}(\mathbb{R}^{4})\setminus \{0\},\left\Vert \Delta u\right\Vert
_{2}^{2}\leq L\}.
\]
The Adams threshold $R(F)$ is given by
\[
R(F)=\sup\{L>0\ |\ C_{A}^{L}<+\infty\}.
\]
We denote by $\gamma^{*}=C_{A}^{R(F)}$ the ratio at the threshold\ $R(F)$.
It follows from the critical exponential growth of nonlinearity $f$ and the Adams inequality with the exact growth condition in $W^{2,2}(\mathbb{R}^4)$ (\cite{MS}) that $R(F)=32\pi^{2}/\alpha_{0}$.

Next, we claim that

\begin{lemma}\label{m}
If $\gamma<\gamma^{*}$, then $m_\gamma<\frac{16\pi^2}{\alpha_0}$.
\label{lemm2}
\end{lemma}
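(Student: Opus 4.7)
The plan is to construct an explicit element of $\mathcal{N}_{\gamma}$ whose energy lies strictly below $16\pi^{2}/\alpha_{0}$. Since $\gamma<\gamma^{*}=C_{A}^{R(F)}$ with $R(F)=32\pi^{2}/\alpha_{0}$, the definition of the supremum produces $u\in W^{2,2}(\mathbb{R}^{4})\setminus\{0\}$ with
$$\|\Delta u\|_{2}^{2}\leq\frac{32\pi^{2}}{\alpha_{0}}\qquad\text{and}\qquad 2\int_{\mathbb{R}^{4}}F(u)\,dx>\gamma\|u\|_{2}^{2}.$$

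Next I would introduce the scale-invariant dilation $u_{\lambda}(x):=u(x/\lambda)$ for $\lambda>0$. In dimension four one has $\|\Delta u_{\lambda}\|_{2}^{2}=\|\Delta u\|_{2}^{2}$, while $\|u_{\lambda}\|_{2}^{2}=\lambda^{4}\|u\|_{2}^{2}$ and $\int F(u_{\lambda})\,dx=\lambda^{4}\int F(u)\,dx$. This yields the clean decomposition
$$I_{\gamma}(tu_{\lambda})=\frac{t^{2}}{2}\|\Delta u\|_{2}^{2}+\lambda^{4}p(t),\qquad p(t):=\frac{t^{2}\gamma}{2}\|u\|_{2}^{2}-\int_{\mathbb{R}^{4}}F(tu)\,dx,$$
separating the dilation-invariant biharmonic term from the nonlinear piece.

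The key analytic step is to study $p$. From the Ambrosetti-Rabinowitz condition (ii) together with the monotonicity (v) one has $sf(s)-2F(s)>0$ for $s\neq 0$, which is equivalent to the map $t\mapsto t^{-2}\int F(tu)\,dx$ being strictly increasing on $(0,\infty)$. Combined with the strict inequality $\int F(u)\,dx>\gamma\|u\|_{2}^{2}/2$, this produces a unique $t^{*}\in(0,1)$ with $p(t^{*})=0$. Moreover, (v) implies that $p'(t)/t$ is strictly decreasing, so for each $\lambda>0$ the map $t\mapsto I_{\gamma}(tu_{\lambda})$ admits a unique positive critical point $t_{\max}(\lambda)$ (the global maximum), characterised by $t_{\max}\|\Delta u\|_{2}^{2}+\lambda^{4}p'(t_{\max})=0$. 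Since $p'(t^{*})<0$ (because $p>0$ immediately before and $p<0$ immediately after $t^{*}$), defining $\lambda_{0}^{4}:=-t^{*}\|\Delta u\|_{2}^{2}/p'(t^{*})>0$ yields $\lambda_{0}>0$ such that $t_{\max}(\lambda_{0})=t^{*}$.

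Finally I would take $w:=t^{*}u_{\lambda_{0}}\in\mathcal{N}_{\gamma}$. Using $p(t^{*})=0$ one obtains
$$m_{\gamma}\leq I_{\gamma}(w)=\frac{(t^{*})^{2}}{2}\|\Delta u\|_{2}^{2}+\lambda_{0}^{4}p(t^{*})=\frac{(t^{*})^{2}}{2}\|\Delta u\|_{2}^{2}\leq\frac{(t^{*})^{2}}{2}\cdot\frac{32\pi^{2}}{\alpha_{0}}<\frac{16\pi^{2}}{\alpha_{0}},$$
the final strict inequality being $t^{*}<1$. The main conceptual point, and the hardest step to recognise, is that the free dilation parameter $\lambda$ is exactly what is needed to place the Nehari projection onto the zero set of $p$, thereby annihilating the nonlinear contribution $\lambda_{0}^{4}p(t^{*})$ and reducing the energy estimate to the biharmonic term alone; the slack $t^{*}<1$, supplied by the strict Adams ratio inequality, then delivers the required strict bound. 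Condition (v) is essential throughout, both to pin down $t^{*}$ uniquely in $(0,1)$ and to guarantee that a suitable $\lambda_{0}$ exists.
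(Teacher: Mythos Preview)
Your proof is correct and uses the same mechanism as the paper: pick $u$ from the definition of $\gamma^{*}$, scale to $t^{*}u$ on the Pohozaev set $\{\gamma\|v\|_{2}^{2}=2\int F(v)\}$ (the paper's $s_{0}u_{0}$), and then exploit the dilation $x\mapsto x/\lambda$, which leaves $\|\Delta\cdot\|_{2}^{2}$ invariant, to land on $\mathcal{N}_{\gamma}$ and obtain $m_{\gamma}\le\tfrac{1}{2}(t^{*})^{2}\|\Delta u\|_{2}^{2}<16\pi^{2}/\alpha_{0}$. The only structural difference is that the paper passes through the intermediate quantity $M_{p}=\inf_{\mathcal{P}_{\gamma}}I_{\gamma}$ (Step~1 shows $m_{\gamma}\le M_{p}$ via a minimising sequence, Step~2 bounds $M_{p}$), while you construct a single explicit element of $\mathcal{N}_{\gamma}\cap\mathcal{P}_{\gamma}$ directly; the underlying idea is identical.

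One correction worth noting: your claim that condition~(v) is ``essential throughout'' is too strong. The Ambrosetti--Rabinowitz condition~(ii) alone already gives $sf(s)>2F(s)$ for $s\neq 0$, hence the strict monotonicity of $t\mapsto t^{-2}\int F(tu)\,dx$ and the uniqueness of $t^{*}$; and it also forces $p'(t^{*})<0$, since $p(t^{*})=p'(t^{*})=0$ would imply $\int\bigl[f(t^{*}u)\,t^{*}u-2F(t^{*}u)\bigr]\,dx=0$, contradicting $sf(s)\ge\mu F(s)>2F(s)$. This matters because the lemma is used in the proof of Theorem~\ref{thm1}, which is stated under hypotheses (i)--(iii) only.
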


\begin{proof}
The proof is divided into two steps:
\medskip

\emph{Step 1:}  Define the Pohozaev manifold $\mathcal{P}_\gamma$ by
$$\mathcal{P}_\gamma=\{u\in W^{2,2}(\mathbb{R}^4)|\ \int_{\mathbb{R}^4}\gamma |u|^2dx=2\int_{\mathbb{R}^4}F(u)dx\},$$ and
 $M_{p}=\inf\left\{  I_{\gamma}\left(  u\right)  ,u\in\mathcal{P}_{\gamma
}\right\}$, we claim $m_\gamma\leq M_p$.
\vskip0.1cm

Assume that $\{u_k\}_k$ is a minimizing sequence for $M_{p}$, that is $u_k\in \mathcal{P}_\gamma$ and $\lim\limits_{k\rightarrow +\infty}I(u_k)=\lim\limits_{k\rightarrow +\infty}\frac{1}{2}\|\Delta u_k\|_2^2=M_{p}$. Choosing $\lambda_k$ such that
$$\int_{\mathbb{R}^4}|\Delta (u_k(\lambda_k x))|^2dx+\gamma\int_{\mathbb{R}^4}|u_k(\lambda_kx)|^2dx=\int_{\mathbb{R}^4}f(u_k(\lambda_k x))u(\lambda_k x)dx.$$
Direct computations yields $$\lambda_k^4=\frac{\int_{\mathbb{R}^4}\big(f(u_k)u_k-2F(u_k)\big)dx}{\|\Delta u_k\|_2^2},$$ which together with (A-R) condition gives that $\lambda_k>0$. Obviously $u_k(\lambda_kx)\in \mathcal{P}_\gamma\cap \mathcal{N}_\gamma$ and
\begin{equation}
m_\gamma\leq \lim\limits_{k\rightarrow +\infty}I(u_k(\lambda_kx))=\lim\limits_{k\rightarrow +\infty}\frac{1}{2}\int_{\mathbb{R}^4}|\Delta (u_k(\lambda_k x))|^2dx=\lim\limits_{k\rightarrow +\infty}\frac{1}{2}\int_{\mathbb{R}^4}|\Delta u_k|^2dx=M_{p}.
\end{equation}

\medskip

\emph{Step 2:} We claim that if $\gamma<\gamma^{*}$, then $M_p<\frac{16\pi^2}{\alpha_0}$.
\vskip0.1cm

We distinguish between the case $\gamma^{*}<+\infty$ and
$\gamma^{*}=+\infty$.
\vskip0.1cm

In the case $\gamma^{*}<+\infty$, since
$\gamma<\gamma^{*}$, then $\gamma<\gamma^{*}-\varepsilon_{0}$ for some
$\varepsilon_{0}>0$. It follows from the definition of  $\gamma^{*}$ that
 there exists some $u_{0}\in W^{2,2}(\mathbb{R}^{4})$ with $\Vert\Delta
u_{0}\Vert_{2}^{2}\leq R(F)$ satisfying
\[
\gamma^{*}-\varepsilon_{0}<\frac{2}{\Vert u_{0}\Vert_{2}^{2}}\int
_{\mathbb{R}^{4}}F(u_{0})dx.
\]
Consequently,
\[
\gamma\Vert u_{0}\Vert_{2}^{2}<2\int_{\mathbb{R}^{4}}F(u_{0})dx.
\]

Let $h(s)=\gamma\int_{\mathbb{R}^4}|su_0|^2dx-2\int_{\mathbb{R}^4}F(su_0)dx$ for $s>0$.
Since $h(1)<0$ and $h(s)>0$ for $s>0$ small enough, then there exists $s_{0}%
\in(0,1)$ satisfying $h(s_{0}u_{0})=0$. Therefore, we have  $s_{0}u_{0}\in
\mathcal{P}_\gamma$ and
\[
M_p\leq\frac{1}{2}\Vert\Delta(s_{0}u_{0})\Vert_{2}^{2}=\frac{1}{2}%
s_{0}^{2}\Vert\Delta u_{0}\Vert_{2}^{2}<\frac{1}{2}R(F)=\frac{16\pi^2}{\alpha_0}.
\]
\vskip0.1cm

In the case $\gamma^{*}=+\infty$, for any $\gamma>0$, there exists $u_{0}\in
W^{2,2}(\mathbb{R}^{4})$ with $\Vert\Delta u_{0}\Vert_{2}^{2}\leq R(F)$
satisfying
\[
\gamma\Vert u_{0}\Vert_{2}^{2}<2\int_{\mathbb{R}^{4}}F(u_{0})dx.
\]
Hence we can repeat the same arguments as  case $\gamma^{*}<+\infty$ to get the conclusion. Combining Step 1 and Step 2, we conclude that
If $\gamma<\gamma^{*}$, then $m_\gamma<\frac{16\pi^2}{\alpha_0}$.

\end{proof}

We now consider a minimizing sequence $\left\{  u_{k}\right\}  _{k}%
\subset\mathcal{N}_\gamma$ for $m_\gamma$. According to (A-R) condition (ii) and
$I(u_{k})\rightarrow m_\gamma>0$, we derive  that $\left\{
u_{k}\right\}  _{k}$ is bounded in $W^{2,2}(\mathbb{R}^4)$,
then up to a subsequence, there exists $u\in W^{2,2}(\mathbb{R}^4)$ such that

\begin{itemize}
\item $u_{k}\rightarrow u$ weakly in $W^{2,2}(\mathbb{R}^4)  $
and in $L^{p}\left(  \mathbb{R}^{4}\right)  $, for any $p>1$,

\item $u_{k}\rightarrow u$ in $L_{loc}^{p}\left(  \mathbb{R}^{4}\right)  $,

\item $u_{k}\rightarrow u$, a.e.
\end{itemize}
\begin{lemma}\label{lem3.3}
If $\gamma<\gamma^*$, then up to some translation, we can assume that the minimizing sequence $u_k$ satisfies $\lim\limits_{L\rightarrow +\infty}\lim\limits_{k\rightarrow +\infty}\int_{B_{L}}f(u_k)u_kdx\neq 0$.
\end{lemma}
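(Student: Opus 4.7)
The plan is to argue by contradiction. Suppose that for every sequence of translations $\{y_k\}\subset\mathbb{R}^4$ and every $L>0$ one has $\int_{B_L(y_k)}f(u_k)u_k\,dx\to 0$. Since $f(t)t>0$ whenever $t\neq 0$ and $W^{2,2}(\mathbb{R}^4)\hookrightarrow L^2_{\mathrm{loc}}(\mathbb{R}^4)$ compactly, this forces $u_k(\cdot+y_k)\rightharpoonup 0$ weakly in $W^{2,2}$ for every $\{y_k\}$; equivalently, $\sup_{y\in\mathbb{R}^4}\int_{B_L(y)}|u_k|^2\,dx\to 0$ for every $L>0$. The classical Lions vanishing lemma applied to the sequence $\{u_k\}$, which is bounded in $W^{2,2}(\mathbb{R}^4)$ by (A-R), then yields $u_k\to 0$ strongly in $L^q(\mathbb{R}^4)$ for every $q\in(2,\infty)$.

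The second ingredient is to upgrade the minimizing sequence to one lying in $\mathcal{N}_\gamma\cap\mathcal{P}_\gamma$, exactly by the rescaling $\tilde u_k(x)=u_k(\lambda_k x)$ used in Step~1 of Lemma~\ref{m} (with $\lambda_k^4=\int(f(u_k)u_k-2F(u_k))\,dx/\|\Delta u_k\|_2^2$). On this intersection the Pohozaev identity $\gamma\|\tilde u_k\|_2^2=2\int F(\tilde u_k)\,dx$ combines with Nehari to collapse the action to $I_\gamma(\tilde u_k)=\tfrac12\|\Delta\tilde u_k\|_2^2$; Lemma~\ref{m} supplies $m_\gamma\leq M_p<16\pi^2/\alpha_0$, hence the crucial sub-criticality
\begin{equation*}
\|\Delta \tilde u_k\|_2^2\longrightarrow 2M_p<\frac{32\pi^2}{\alpha_0}.
\end{equation*}
This is the payoff of mixing the Nehari and Pohozaev manifolds: only the pure bi-Laplace norm is forced to be sub-critical, while the full $W^{2,2}_\gamma$ norm is only controlled by $\tfrac{2\mu}{\mu-2}m_\gamma$ through (A-R), which may exceed $32\pi^2/\alpha_0$ for $\mu$ close to $2$.

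To close the argument, fix $\beta_0>\alpha_0$ and $p'>1$ close enough to these endpoints so that $p'\beta_0\cdot 2M_p<32\pi^2$. Starting from the bound $|f(t)t|\leq\varepsilon t^2+C_\varepsilon|t|^{\mu+1}(e^{\beta_0 t^2}-1)$ of Remark~\ref{adrem}, rewrite
\begin{equation*}
|\tilde u_k|^{\mu+1}(e^{\beta_0 \tilde u_k^2}-1)=\bigl[\,|\tilde u_k|^{\mu+1}(1+\tilde u_k^2)\,\bigr]\cdot\frac{e^{\beta_0 \tilde u_k^2}-1}{1+\tilde u_k^2}
\end{equation*}
and apply H\"older with exponents $p,p'$ together with the pointwise inequality $(e^x-1)^{p'}\leq e^{p'x}-1$. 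The Adams inequality with the exact growth condition on $\mathbb{R}^4$ (Masmoudi--Sani, \cite{MS}) bounds the exponential factor uniformly via $\int\frac{e^{p'\beta_0 \tilde u_k^2}-1}{1+\tilde u_k^2}\,dx\leq C\|\tilde u_k\|_2^2\leq C$, while the polynomial factor is dominated by $\|\tilde u_k\|_{p(\mu+1)}^{\mu+1}+\|\tilde u_k\|_{p(\mu+3)}^{\mu+3}$, which tends to $0$ by the Lions vanishing from the first paragraph. Therefore $\int f(\tilde u_k)\tilde u_k\,dx\to 0$, and the Nehari identity $\|\tilde u_k\|^2_{W^{2,2}_\gamma}=\int f(\tilde u_k)\tilde u_k\,dx$ forces $I_\gamma(\tilde u_k)\to 0$, contradicting $I_\gamma(\tilde u_k)\to M_p\geq m_\gamma>0$. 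The main obstacle is this last Adams estimate: the classical Ruf--Sani inequality demands sub-criticality of the full $W^{2,2}_\gamma$ norm, which is not available here, so the exact-growth refinement of Masmoudi--Sani --- which uses only $\|\Delta \tilde u_k\|_2$ --- is indispensable.
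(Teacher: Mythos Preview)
Your opening Lions-type step --- deducing $\sup_y\int_{B_L(y)}|u_k|^2\,dx\to 0$ from the vanishing of $\int_{B_L(y_k)}f(u_k)u_k\,dx$ via Fatou and the compact embedding $W^{2,2}\hookrightarrow L^2_{\mathrm{loc}}$ --- is clean and correct. The gap is in the ``upgrade'' step. Dilation $u\mapsto u(\lambda\,\cdot)$ \emph{preserves} the Pohozaev constraint $\gamma\|u\|_2^2=2\int F(u)\,dx$ (both sides scale by $\lambda^{-4}$) but not the Nehari constraint. The rescaling in Step~1 of Lemma~\ref{m} runs in the direction $\mathcal{P}_\gamma\to\mathcal{N}_\gamma\cap\mathcal{P}_\gamma$: it starts from an $M_p$-minimizing sequence already lying in $\mathcal{P}_\gamma$ and dilates it into $\mathcal{N}_\gamma$. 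You are trying to run it from $\mathcal{N}_\gamma$ into $\mathcal{P}_\gamma$, which is impossible: since $\mathcal{P}_\gamma$ is dilation-invariant, $u_k(\lambda_k\,\cdot)\in\mathcal{P}_\gamma$ if and only if $u_k\in\mathcal{P}_\gamma$, and the given $m_\gamma$-minimizing sequence $\{u_k\}\subset\mathcal{N}_\gamma$ has no reason to satisfy the Pohozaev identity. Consequently neither the collapse $I_\gamma(\tilde u_k)=\tfrac12\|\Delta\tilde u_k\|_2^2$ nor the convergence $\|\Delta\tilde u_k\|_2^2\to 2M_p$ is available (note also that $\|\Delta\tilde u_k\|_2=\|\Delta u_k\|_2$, so your rescaling leaves the bi-Laplace norm unchanged anyway), and the exact-growth Adams estimate has no usable input.

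The paper obtains subcriticality of the \emph{full} norm rather than of $\|\Delta u_k\|_2$ alone, so no Pohozaev detour is needed. Under your Lions vanishing $\|u_k\|_q\to 0$ for every $q>2$, split $\int F(u_k)\,dx$: on $\{|u_k|<R\}$ the bound $F(t)\le\varepsilon t^2+C_{\varepsilon,R}|t|^q$ gives a contribution that is $o_k(1)$, while on $\{|u_k|\ge R\}$ condition~(iii) gives $F(u_k)\le\tfrac{M_0}{R}f(u_k)u_k$, hence a contribution $\le C/R$ since $\int f(u_k)u_k\,dx=\|u_k\|^2_{W^{2,2}_\gamma}$ is bounded. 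Sending $k\to\infty$ and then $R\to\infty$ yields $\int F(u_k)\,dx\to 0$, whence
\[
\|u_k\|^2_{W^{2,2}_\gamma}=2I_\gamma(u_k)+2\int_{\mathbb{R}^4} F(u_k)\,dx\longrightarrow 2m_\gamma<\frac{32\pi^2}{\alpha_0}.
\]
Now the ordinary Adams inequality \eqref{Adams entire space} applies directly, and H\"older together with $\|u_k\|_q\to 0$ gives $\int f(u_k)u_k\,dx\to 0$, contradicting the Nehari identity and $m_\gamma>0$. The Masmoudi--Sani refinement is not needed.
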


\begin{proof}Define $M(L)=\lim\limits_{k\rightarrow +\infty}\sup_{y\in \mathbb{R}^{4}}\int_{B_{L}(y)}f(u_k)u_kdx$, we will show that $\lim\limits_{L\rightarrow +\infty}M(L)\neq 0$.  We first show that there exists some $R>0$ such that \begin{equation}\label{no-vanish}\lim\limits_{k\rightarrow \infty}\int_{\{|u_k|<R\}} f\left(u_{k}\right) u_{k} dx>0. \end{equation} Suppose not, that is
$$\lim\limits_{R\rightarrow \infty}\lim\limits_{k\rightarrow \infty}\int_{\{|u_k|<R\}} f\left(u_{k}\right) u_{k} \mathrm{~d}x=0,\ \
\lim\limits_{R\rightarrow \infty}\lim\limits_{k\rightarrow \infty}\int_{\{|u_k|\geq R\}} f\left(u_{k}\right) u_{k} \mathrm{~d}x=\lim\limits_{k\rightarrow \infty}\int_{\mathbb{R}^4} f\left(u_{k}\right) u_{k} \mathrm{~d}x.$$
Then it follows that $u_k\rightharpoonup 0$ in $W^{2,2}(\mathbb{R}^4)$ and $\{u_k\}_k$ blow-up. By the condition (ii), we know that  $\lim\limits_{t\rightarrow +\infty}\frac{f(t)t}{F(t)}=+\infty$. Hence, it follows that
\begin{equation}\begin{split}
\lim\limits_{k\rightarrow +\infty}\int_{\mathbb{R}^4}F(u_k)dx&=\lim\limits_{R\rightarrow +\infty}\lim\limits_{k\rightarrow +\infty}\int_{\{|u_k|<R\}}F(u_k)dx+\lim\limits_{R\rightarrow +\infty}\lim\limits_{k\rightarrow +\infty}\int_{\{|u_k|\geq R\}}F(u_k)dx\\
&=\lim\limits_{R\rightarrow +\infty}\lim\limits_{k\rightarrow +\infty}\int_{\{|u_k|\geq R\}}F(u_k)dx=0
\end{split}\end{equation}
From this  claim  and  Lemma \ref{m}, we  immediately get
 $\lim\limits_{k\rightarrow +\infty}\|\Delta u_k\|_{L^{2}(\mathbb{R}^4)}^2+\gamma \|u_k\|_{L^{2}(\mathbb{R}^4)}^2=2m_{\gamma}<\frac{32\pi^2}{\alpha_0}$. Through the sharp Adams inequality \eqref{Adams entire space} and Remark \ref{adrem}, we see that for any $\varepsilon>0$,

 \begin{equation}\begin{split}
0<\int_{\mathbb{R}^4} f\left(u_{k}\right)u_{k} dx&\leq \varepsilon\|u_k\|_2+C_{\varepsilon}\int_{\mathbb{R}^4}|u_k|^{\mu+1}(e^{\beta_0 u_k^2}-1)dx\\
&\leq \varepsilon\|u_k\|_2+c\|u_k\|^{\mu}_{\mu}\rightarrow 0 \text{ as } k\rightarrow +\infty,
\end{split}\end{equation}
 which is a contradiction. This proves that there exists some $R>0$ such that \eqref{no-vanish} holds.
\vskip0.1cm

Now, we are in position to prove that $\lim\limits_{L\rightarrow +\infty}M(L)\neq 0$. In fact, if $\lim\limits_{L\rightarrow +\infty}M(L)=0$, then for any $L>0$, $\lim\limits_{k\rightarrow +\infty}\sup_{y\in \mathbb{R}^{4}}\int_{B_{L}(y)}f(u_k)u_kdx=0$. It follows from the Lions lemma (\cite{Lions}) that $\lim\limits_{k\rightarrow +\infty}\|u\|_{L^{q}(\mathbb{R}^4)}=0$ for any $q>2$. Hence by Remark \ref{adrem}, we can derive that for any $R>0,$
\begin{equation}\begin{split}
\lim_{k\rightarrow \infty}\int_{\{|u_k|<R\}} |f\left(u_{k}\right) u_{k}|dx&\leq \lim\limits_{k\rightarrow \infty}\varepsilon\|u_k\|_2+C_{\varepsilon}\lim_{k\rightarrow \infty}\int_{\mathbb{R}^4}|u_k|^{\mu+1}(e^{\beta_0 R^2}-1)dx\\
&\leq C\varepsilon,
\end{split}\end{equation}
which is an contradiction with $\lim\limits_{k\rightarrow \infty}\int_{\{|u_k|<R\}} f\left(u_{k}\right) u_{k} \mathrm{~d}x>0$. This accomplishes the proof of $\lim\limits_{L\rightarrow +\infty}M(L)\neq 0$.
\vskip0.1cm

Hence there exists $x_k\in \mathbb{R}^{4}$ such that $\lim\limits_{L\rightarrow +\infty}\lim\limits_{k\rightarrow +\infty}\int_{B_{L}(x_k)}f(u_k)u_kdx\neq 0$. Denote $\{w_k\}_k$ by $w_k(x)=u_k(x+x_k)$, then $w_k$ is still a minimizing sequence for $m_\gamma$ and
$$\lim\limits_{L\rightarrow +\infty}\lim\limits_{k\rightarrow +\infty}\int_{B_{L}}f(w_k)w_kdx\neq 0.$$ For convenience, we still denoted this new minimizing sequence by $\{u_k\}_k$.

\end{proof}

Next, we claim that

\begin{lemma}\label{lem3.4}It holds
$$\lim_{k\rightarrow+\infty}\int_{\mathbb{R}^4}f(u_k)u_kdx=\lim_{L\rightarrow +\infty}\lim_{k\rightarrow+\infty}\int_{B_L(0)}f(u_k)u_kdx$$ and
$$\lim_{L\rightarrow +\infty}\lim_{k\rightarrow+\infty}\int_{\mathbb{R}^4\setminus B_L(0)}f(u_k)u_kdx=0.$$
\end{lemma}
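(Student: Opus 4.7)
The plan is to argue by contradiction, ruling out any mass of $f(u_k)u_k$ escaping to spatial infinity via an energy-splitting analysis. Suppose, toward contradiction, that $\limsup_{L\to\infty}\limsup_{k\to\infty}\int_{\mathbb{R}^4\setminus B_L(0)} f(u_k)u_k\,dx=\alpha>0$. By the A-R condition and $I_\gamma(u_k)\to m_\gamma$ the sequence is bounded in $W^{2,2}(\mathbb{R}^4)$, and Lemma \ref{m} furnishes the strict subcritical bound $\|\Delta u_k\|_2^2<32\pi^2/\alpha_0$ for $k$ large. Passing to a subsequence, $u_k\rightharpoonup u$ weakly in $W^{2,2}$, $u_k\to u$ in $L^p_{\mathrm{loc}}$ for every $p\geq 1$, and pointwise a.e. This subcritical margin combined with the growth bound \eqref{xin1} and Vitali's theorem yields $\int_{B_L}f(u_k)u_k\,dx\to\int_{B_L}f(u)u\,dx$ for each fixed $L$, so Lemma \ref{lem3.3} forces $u\not\equiv 0$.

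Next I would produce a second nontrivial bubble. Splitting the escaping integral into $\{|u_k|\geq R\}$ (handled by A-R together with the upper bound on $\int(f(u_k)u_k-2F(u_k))\,dx\to 2m_\gamma$ and Chebyshev's bound on $|\{|u_k|\geq R\}|$) and $\{|u_k|<R\}$ (where \eqref{xin1} reduces the integrand to $|u_k|^{\mu+1}$ up to the uniform factor $e^{\beta_0 R^2}$), the escaping mass forces $\int_{(\mathbb{R}^4\setminus B_L)\cap\{|u_k|<R\}}|u_k|^{\mu+1}\,dx\geq c>0$ for some $R>0$ and arbitrarily large $L,k$. Lions' lemma then yields $y_k\in\mathbb{R}^4$ with $|y_k|\to\infty$ and $u_k(\cdot+y_k)\rightharpoonup w\ne 0$ in $W^{2,2}$. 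Because $|y_k|\to\infty$, the profiles $u$ and $w(\cdot-y_k)$ are spatially orthogonal. Using the subcritical Adams margin from Lemma \ref{m} together with \eqref{xin1} to justify Brezis-Lieb for the exponential nonlinearities, I obtain the two-bubble decomposition
\[
I_\gamma(u_k)=I_\gamma(u)+I_\gamma(w)+I_\gamma(r_k)+o(1),
\]
with $r_k:=u_k-u-w(\cdot-y_k)\rightharpoonup 0$ weakly in $W^{2,2}$.

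To close the contradiction I would upgrade the minimizing sequence to a Palais-Smale sequence via Ekeland's variational principle on $\mathcal{N}_\gamma$ (the tangent derivative of $N_\gamma$ along $\mathcal{N}_\gamma$ is uniformly bounded away from zero by A-R), so that the weak limits $u$ and $w$ are genuine critical points of $I_\gamma$. Each therefore satisfies the Pohozaev identity and lies in the Pohozaev manifold $\mathcal{P}_\gamma$, so Step 1 of the proof of Lemma \ref{m} gives $I_\gamma(u),\,I_\gamma(w)\geq M_p\geq m_\gamma$. Combined with $\liminf_k I_\gamma(r_k)\geq 0$ (A-R plus $r_k\rightharpoonup 0$ plus the subcritical Adams control on $r_k$), the decomposition yields $m_\gamma=\lim I_\gamma(u_k)\geq 2m_\gamma$, contradicting $m_\gamma>0$. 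Once no bubble can escape to infinity, both displayed identities of the lemma follow: the mass of $f(u_k)u_k$ concentrates in fixed balls and hence $\lim_k\int_{\mathbb{R}^4}f(u_k)u_k=\lim_L\lim_k\int_{B_L(0)}f(u_k)u_k$.

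The main obstacle I anticipate is the Brezis-Lieb decomposition for the exponential terms $\int F(u_k)$ and $\int f(u_k)u_k$: the classical proof requires equi-integrability of $e^{\beta u_k^2}-1$, and this is available only because Lemma \ref{m} keeps $\|u_k\|_{W^{2,2}_\gamma}^2$ strictly below $32\pi^2/\alpha_0$; one must select $\beta_0>\alpha_0$ and a H\"older exponent $p'>1$ close enough to $1$ so that $\{(e^{p'\beta_0 u_k^2}-1)\}$ remains uniformly in $L^1$, and then combine a.e.\ convergence with the spatial orthogonality of the two profiles. A secondary delicacy is the bubble bound $I_\gamma(w)\geq m_\gamma$ without condition (v): the Nehari fibration on a ray need not be single-valued under A-R alone, so routing the bound through the Pohozaev manifold (as in the proof of Lemma \ref{m}) is cleaner, at the price of needing $w$ to be a true critical point—which is precisely what the Ekeland detour supplies.
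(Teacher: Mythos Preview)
There is a genuine gap. You assert that Lemma~\ref{m} furnishes the strict subcritical bound $\|\Delta u_k\|_2^2<32\pi^2/\alpha_0$ (equivalently $\|u_k\|_{W^{2,2}_\gamma}^2<32\pi^2/\alpha_0$) for large $k$, and your entire plan rests on this margin. But Lemma~\ref{m} only gives $m_\gamma<16\pi^2/\alpha_0$, a bound on the \emph{functional value}, not on the norm. On $\mathcal{N}_\gamma$ one has $\|u_k\|_{W^{2,2}_\gamma}^2=2I_\gamma(u_k)+2\int_{\mathbb{R}^4}F(u_k)\,dx$, and the A-R condition yields at best $\|u_k\|_{W^{2,2}_\gamma}^2\leq\frac{2\mu}{\mu-2}\,m_\gamma$ with $\frac{2\mu}{\mu-2}>2$; this is in general strictly larger than $32\pi^2/\alpha_0$. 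In the paper's logic the subcritical norm bound becomes available only \emph{after} the present lemma, via Lemma~\ref{lem3.5be} (which uses Lemma~\ref{lem3.4}) and then Lemma~\ref{lem3.6}; invoking it here is circular. Without that margin every subsequent step collapses: Vitali equi-integrability of $e^{\beta_0 u_k^2}-1$ is unavailable, so local convergence $\int_{B_L}f(u_k)u_k\to\int_{B_L}f(u)u$ is unjustified; the Brezis--Lieb splitting for the exponential terms fails; and even after Ekeland you cannot pass to weak-limit critical points because $f(u_k)\to f(u)$ in $L^1_{\mathrm{loc}}$ is exactly what needs the Adams compactness you do not have.

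The paper avoids this obstruction by never touching a norm bound. It works entirely with the Nehari-invariant quantity $\int_{\mathbb{R}^4}(f(u_k)u_k-2F(u_k))\,dx=2I_\gamma(u_k)\to 2m_\gamma$, cuts $u_k$ spatially into $u_{k,L}^0=u_k\phi_L^0$ and $u_{k,L}^\infty=u_k\phi_L^\infty$, projects onto $\mathcal{N}_\gamma$ the piece with surplus Nehari defect, and compares using the monotonicity of $t\mapsto f(t)t-2F(t)$ (a consequence of condition~(v)). This forces the dichotomy $(M^0,M^\infty)\in\{(M,0),(0,M)\}$ directly at the level of $\int(f\,u-2F)$, and Lemma~\ref{lem3.3} rules out $(0,M)$. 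No profile decomposition, no Ekeland detour, and no Adams-level compactness are needed.
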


\begin{proof}Set $$M=\lim\limits_{k\rightarrow+\infty}\int_{\mathbb{R}^4}f(u_k)u_kdx$$, $$M^0=\lim\limits_{L\rightarrow +\infty}\lim\limits_{k\rightarrow+\infty}\int_{B_L(0)}f(u_k)u_kdx$$ and $$M^{\infty}=\lim\limits_{L\rightarrow +\infty}\lim\limits_{k\rightarrow+\infty}\int_{\mathbb{R}^4\setminus B_L(0)}f(u_k)u_kdx.$$ Obviously, we have $M^{0}+M^{\infty}=M$.  We first show that
$(M^{0},M^{\infty})=(M,0)$ or $(M^{0},M^{\infty})=(0,M)$. Since $u_k\in \mathcal{N}_\gamma$, then
\begin{equation}\begin{split}
\int_{\mathbb{R}^4}(|\Delta u_k|^2+\gamma|u_k|^2)dx&=\lim\limits_{L\rightarrow +\infty}\lim\limits_{k\rightarrow+\infty}\int_{B_L(0)}f(u_k)u_kdx+\lim\limits_{L\rightarrow +\infty}\lim\limits_{k\rightarrow+\infty}\int_{\mathbb{R}^4\setminus B_L(0)}f(u_k)u_kdx\\
&=M^{0}+M^{\infty}.
\end{split}\end{equation}
Noticing that we can also write $\int_{\mathbb{R}^4}(|\Delta u_k|^2+\gamma|u_k|^2)dx$ as
\begin{align*}
 \int_{\mathbb{R}^4}(|\Delta u_k|^2+\gamma|u_k|^2)dx&=\lim\limits_{L\rightarrow +\infty}\lim\limits_{k\rightarrow+\infty}\int_{B_L(0)}(|\Delta u_k|^2+\\&+\gamma|u_k|^2)dx+\lim\limits_{L\rightarrow +\infty}\lim\limits_{k\rightarrow+\infty}\int_{\mathbb{R}^4\setminus B_L(0)}(|\Delta u_k|^2+\gamma|u_k|^2)dx.
\end{align*}

Hence we can assume that \begin{equation}\label{case1}\lim\limits_{L\rightarrow +\infty}\lim\limits_{k\rightarrow+\infty}\int_{B_L(0)}(|\Delta u_k|^2+\gamma|u_k|^2)dx\leq \lim\limits_{L\rightarrow +\infty}\lim\limits_{k\rightarrow+\infty}\int_{B_L(0)}f(u_k)u_kdx\end{equation} or  \begin{equation}\label{case2}\lim\limits_{L\rightarrow +\infty}\lim\limits_{k\rightarrow+\infty}\int_{\mathbb{R}^4\setminus B_L(0)}(|\Delta u_k|^2+\gamma|u_k|^2)dx\leq \lim\limits_{L\rightarrow +\infty}\lim\limits_{k\rightarrow+\infty}\int_{\mathbb{R}^4\setminus B_L(0)}f(u_k)u_kdx.\end{equation}
\vskip0.1cm

For a sufficiently large number $L>0$, define function
\begin{equation}\label{test}\phi_{L}^{0}(x)=\begin{cases}
1,\ \ \ \ \ {\rm if}\  |x|\leq L,\\
0\sim 1 ,\ {\rm if}\  L<|x|\leq L+1,\\
0,\ \ \ \ \ {\rm if}\  |x|\geq L+1,
\end{cases}\end{equation}
and $\phi_{L}^{\infty}(x)=1-\phi_{L}^{0}(x)$. Define $u_{k,L}^{*}=u_k\phi_{L}^{*}$ ($*=0$ or $\infty$), the following fact is easily proved by an  argument similar to that in \cite{Iko}.
$$\lim\limits_{L\rightarrow +\infty}\lim\limits_{k\rightarrow+\infty}\int_{\mathbb{R}^4}(|\Delta u_{k,L}^{0}|^2+\gamma|u_{k,L}^{0}|^2)dx=\lim\limits_{L\rightarrow +\infty}\lim\limits_{k\rightarrow+\infty}\int_{B_L(0)}(|\Delta u_k|^2+\gamma|u_k|^2)dx$$
$$\lim\limits_{L\rightarrow +\infty}\lim\limits_{k\rightarrow+\infty}\int_{\mathbb{R}^4}(|\Delta u_{k,L}^{\infty}|^2+\gamma|u_{k,L}^{\infty}|^2)dx=\lim\limits_{L\rightarrow +\infty}\lim\limits_{k\rightarrow+\infty}\int_{\mathbb{R}^4\setminus B_L(0)}(|\Delta u_k|^2+\gamma|u_k|^2)dx$$
$$\lim\limits_{L\rightarrow +\infty}\lim\limits_{k\rightarrow+\infty}\int_{\mathbb{R}^4}f(u_{k,L}^{0})u_{k,L}^0dx=\lim\limits_{L\rightarrow +\infty}\lim\limits_{k\rightarrow+\infty}\int_{B_L(0)}f(u_k)u_kdx$$
$$\lim\limits_{L\rightarrow +\infty}\lim\limits_{k\rightarrow+\infty}\int_{\mathbb{R}^4}f(u_{k,L}^{\infty})u_{k,L}^{\infty}dx=\lim\limits_{L\rightarrow +\infty}\lim\limits_{k\rightarrow+\infty}\int_{\mathbb{R}^4\setminus B_L(0)}f(u_k)u_kdx.$$
$$\lim\limits_{L\rightarrow +\infty}\lim\limits_{k\rightarrow+\infty}\int_{\mathbb{R}^4}F(u_{k,L}^{0})dx=\lim\limits_{L\rightarrow +\infty}\lim\limits_{k\rightarrow+\infty}\int_{B_L(0)}F(u_k)dx$$
$$\lim\limits_{L\rightarrow +\infty}\lim\limits_{k\rightarrow+\infty}\int_{\mathbb{R}^4}F(u_{k,L}^{\infty})dx=\lim\limits_{L\rightarrow +\infty}\lim\limits_{k\rightarrow+\infty}\int_{\mathbb{R}^4\setminus B_L(0)}F(u_k)dx.$$
Without loss of generality, we can assume that (\ref{case1}) holds, then there exists $t_{k,L}^{0}$ such that $t_{k,L}^{0}u_{k,L}^0\in \mathcal{N}_\gamma$. Obviously, $\lim\limits_{L\rightarrow +\infty}\lim\limits_{k\rightarrow+\infty}t_{k,L}^{0}\leq 1$. If
$t_{k,L}^{0}\leq 1$, then
\begin{equation}\label{eq3,1}\begin{split}
I_\gamma(t_{k,L}^{0}u_{k,L}^0)&=\frac{1}{2}\int_{\mathbb{R}^4}\big(f(t_{k,L}^{0}u_{k,L}^0)t_{k,L}^{0}u_{k,L}^0-2F(t_{k,L}^{0}u_{k,L}^0)\big)\\
&\leq \frac{1}{2}\int_{\mathbb{R}^4}\big(f(u_{k,L}^0)u_{k,L}^0-2F(u_{k,L}^0)\big).
\end{split}\end{equation}
If $t_{k,L}^{0}\geq 1$, then
\begin{equation}\label{eq3,2}\begin{split}
I_\gamma(t_{k,L}^{0}u_{k,L}^0)&=\frac{1}{2}(t_{k,L}^{0})^2\int_{\mathbb{R}^4}(|\Delta u_{k,L}^{0}|^2+\gamma|u_{k,L}^{0}|^2)dx-\int_{\mathbb{R}^4}F(t_{k,L}^{0}u_{k,L}^0)dx\\
&\leq \frac{1}{2}(t_{k,L}^{0})^2\int_{\mathbb{R}^4}(|\Delta u_{k,L}^{0}|^2+\gamma|u_{k,L}^{0}|^2)dx-\int_{\mathbb{R}^4}F(u_{k,L}^0)dx\\
&\leq \frac{1}{2}(t_{k,L}^{0})^2(\int_{\mathbb{R}^4}f(u_{k,L}^0)u_{k,L}^0dx+o_{k,L}(1))-\int_{\mathbb{R}^4}F(u_{k,L}^0)dx.
\end{split}\end{equation}
Combining the above estimate, we derive that
\begin{equation}\begin{split}
m_\gamma &\leq\lim\limits_{L\rightarrow +\infty}\lim\limits_{k\rightarrow+\infty}I_\gamma(t_{k,L}^{0}u_{k,L}^0)\\&\leq \frac{1}{2}\lim\limits_{L\rightarrow +\infty}\lim\limits_{k\rightarrow+\infty}\int_{\mathbb{R}^4}\big(f(u_{k,L}^0)u_{k,L}^0-2F(u_{k,L}^0)\big)dx\\
&\ \ \ \ \ +\frac{1}{2}\lim\limits_{L\rightarrow +\infty}\lim\limits_{k\rightarrow+\infty}\int_{\mathbb{R}^4}\big(f(u_{k,L}^{\infty})u_{k,L}^{\infty}-2F(u_{k,L}^{\infty})\big)dx\\
&=\lim\limits_{k\rightarrow +\infty}I(u_k)=m_\gamma.
\end{split}\end{equation}
Thus, we can conclude that $$\lim\limits_{L\rightarrow +\infty}\lim\limits_{k\rightarrow+\infty}\int_{\mathbb{R}^4}\big(f(u_{k,L}^{\infty})u_{k,L}^{\infty}-2F(u_{k,L}^{\infty})\big)dx=0,$$
that is $$\lim\limits_{L\rightarrow +\infty}\lim\limits_{k\rightarrow+\infty}\int_{\mathbb{R}^4\setminus B_{L}(0)}\big(f(u_{k})u_{k}-2F(u_{k})\big)dx=0,$$
which together with (A-R) condition implies that $M^{\infty}=0$, that is $(M^{0},M^{\infty})=(M,0)$. Similarly, we can prove that $(M^{0},M^{\infty})=(0,M)$ if we assume that
(\ref{case2}) holds.

Now, it remains to show that $(M^{0},M^{\infty})=(0,M)$ is impossible to occur. In fact, according to Lemma \ref{lem3.3}, we get $\lim\limits_{L\rightarrow +\infty}\lim\limits_{k\rightarrow +\infty}\int_{B_{L}(0)}f(u_k)u_kdx\neq 0$, which implies that $M^{0}\neq 0$. This accomplishes the proof of Lemma \ref{lem3.4}.
\end{proof}

\begin{lemma}\label{lem3.5be}
There holds $\lim\limits_{k\rightarrow +\infty}\int_{\mathbb{R}^4}F(u_k)dx=\int_{\mathbb{R}^4}F(u)dx$.
\end{lemma}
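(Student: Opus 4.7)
\textbf{Proof plan for Lemma \ref{lem3.5be}.} The strategy is to decompose $\int_{\mathbb{R}^{4}}F(u_k)\,dx$ into a spatial tail outside a large ball $B_L(0)$ plus a local part, and then split the local part according to whether $|u_k|$ is small or large. Each of the three pieces will be controlled by a different tool: Lemma \ref{lem3.4} for the spatial tail, condition (iii) combined with (A-R) for the high-values region, and the dominated convergence theorem for the bounded-values region. First, I record two preliminary facts that follow immediately from (A-R): $F(t)\geq 0$ for all $t$, with $F(t)\leq tf(t)/\mu$, and $tf(t)\geq 0$. Combining the first with $u_k\in\mathcal{N}_\gamma$, Fatou's lemma, and the a.e.\ convergence $u_k\to u$ yields $F(u)\in L^{1}(\mathbb{R}^{4})$.

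For the spatial tail, the pointwise estimate $F(u_k)\leq u_k f(u_k)/\mu$ gives
\[
\int_{\mathbb{R}^{4}\setminus B_L(0)}F(u_k)\,dx\;\leq\;\frac{1}{\mu}\int_{\mathbb{R}^{4}\setminus B_L(0)}f(u_k)u_k\,dx,
\]
whose right-hand side tends to $0$ as $L\to\infty$ (uniformly for $k$ large) by Lemma \ref{lem3.4}. Since $F(u)\in L^{1}$, the analogous tail $\int_{\mathbb{R}^{4}\setminus B_L(0)}F(u)\,dx$ is likewise small for large $L$. For the high-values region on the ball, I invoke condition (iii): for $R>t_0$,
\[
\int_{B_L(0)\cap\{|u_k|>R\}}F(u_k)\,dx\;\leq\;M_0\!\!\int_{\{|u_k|>R\}}\!|f(u_k)|\,dx\;\leq\;\frac{M_0}{R}\int_{\mathbb{R}^{4}}u_k f(u_k)\,dx\;\leq\;\frac{M_0 C}{R},
\]
where the second inequality uses $u_k f(u_k)\geq 0$ to write $|f(u_k)|\leq u_k f(u_k)/|u_k|\leq u_k f(u_k)/R$ on $\{|u_k|>R\}$, and the last uses the uniform bound $\|u_k\|_{W^{2,2}_\gamma}^{2}=\int u_k f(u_k)\leq C$. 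The same bound gives $\int_{B_L(0)\cap\{|u|>R\}}F(u)\,dx\leq M_0C/R$.

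Finally, on the bounded-values region $B_L(0)\cap\{|u_k|\leq R\}$ the integrand satisfies $|F(u_k)|\leq \max_{|t|\leq R}|F(t)|$, an integrable majorant on the bounded set $B_L(0)$. Using $u_k\to u$ a.e.\ and choosing $R$ outside the (at most countable) set of atoms of the distribution function of $|u|$, the dominated convergence theorem gives
\[
\int_{B_L(0)\cap\{|u_k|\leq R\}}F(u_k)\,dx\;\longrightarrow\;\int_{B_L(0)\cap\{|u|\leq R\}}F(u)\,dx\qquad(k\to\infty).
\]
Assembling the three pieces in the order: choose $R$ large to kill the high-values contributions, then $L$ large to kill the spatial tails, then take $k\to\infty$ to close the bounded-values region via DCT, yields $\lim_{k\to\infty}\int_{\mathbb{R}^{4}}F(u_k)\,dx=\int_{\mathbb{R}^{4}}F(u)\,dx$. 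The main obstacle is the high-values piece: the growth condition (iii), which was not needed in Lemma \ref{lem3.4}, enters precisely here to convert $F(u_k)$ into $|f(u_k)|$, after which (A-R) allows the reduction to the uniformly controlled quantity $\int u_k f(u_k)\,dx$.
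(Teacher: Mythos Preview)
Your proof is correct and follows essentially the same route as the paper: the spatial tail is killed via Lemma \ref{lem3.4} together with (A-R), the high-values region on the ball is handled by condition (iii) to pass from $F$ to $|f|$ and then to the uniformly bounded quantity $\int u_k f(u_k)$, and the bounded-values region is closed by dominated convergence. Your treatment is in fact slightly more careful than the paper's in two minor respects---you explicitly verify $F(u)\in L^{1}(\mathbb{R}^{4})$ via Fatou, and you handle the level-set indicator by choosing $R$ away from atoms of the distribution of $|u|$---but the architecture of the argument is identical.
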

\begin{proof}
It follows from Lemma \ref{lem3.4} that $\lim\limits_{L\rightarrow +\infty}\lim\limits_{k\rightarrow +\infty}\int_{\mathbb{R}^{4}\setminus B_{L}(0)}f(u_k)u_kdx=0$, which together with the (A-R) condition implies that $\lim\limits_{L\rightarrow +\infty}\lim\limits_{k\rightarrow +\infty}\int_{\mathbb{R}^{4}\setminus B_{L}(0)}F(u_k)dx=0$. In order to obtain the desired convergence, we only need to prove that
$$\lim\limits_{L\rightarrow +\infty}\lim\limits_{k\rightarrow +\infty}\int_{B_{L}(0)}F(u_k)dx=\int_{\mathbb{R}^4}F(u)dx.$$
Indeed, for any $s>0$, we have
\begin{equation}\begin{split}
& |\int_{B_{L}(0)}F\left(u_k\right)  dx-\int_{B_{L}(0)}F\left(  u\right)  dx| \\
&  \leq |\int_{B_{L}(0)\cap \{|u_k|<s\}}
F\left(  u_k\right) dx-\int_{B_{L}(0)\cap \{|u_k|<s\}}F\left(  u\right)  dx| \\
&  \ \ \ \ +\left\vert \int_{B_{L}(0)\cap \{|u_k|\geq s\}}F\left(u_{k}\right)dx-\int_{B_{L}(0)\cap \{|u_k|\geq s\}}F\left(  u\right)  dx\right\vert \\
&  =I_{k,R,s}+II_{k,R,s}.
\end{split}\end{equation}
A direct application of the dominated convergence theorem leads to
$I_{k,R,s}\rightarrow0$. For $II_{k,R,s}$, from the condition (iii), we have
\begin{align*}
\int_{B_{L}^{0}\cap\left\{  {|u_{k}|}\geq s\right\}  }F\left(
u_{k}\right)  dx  &  \leq\frac{c}{s}\int_{\mathbb{R}^{4}\cap\left\{
{|u_{k}|}\geq s\right\}  }f\left(  u_{k}\right)  {u_{k}%
}dx\\
&  =\frac{c}{s}\int_{\mathbb{R}^{4}}f\left(  u_{k}\right)  {u_{k}%
}dx\rightarrow0,\text{ as }s\rightarrow\infty,
\end{align*}
where we have used the fact that $\int_{\mathbb{R}^{4}}f\left(  u_{k}\right)
{u_{k}}dx$ is bounded. Consequently, $II_{k,R,s}\rightarrow0$, and the lemma
is finished.
\vskip0.1cm
\end{proof}

\begin{lemma}\label{lem3.6}
Let $u_k$ be a bounded sequence in $W^{2,2}(\mathbb{R}^4)$ converging weakly to non-zero $u$. Furthermore, we also assume that $\lim\limits_{k\rightarrow+\infty}I_{\gamma}(u_k)<\frac{16\pi^2}{\alpha_0}$ and $\int_{\mathbb{R}^4}\big(|\Delta u|^2+\gamma|u|^2\big)dx>\int_{\mathbb{R}^4}f(u)udx$, then
$$\lim_{k\rightarrow+\infty}\int_{\mathbb{R}^4}f(u_k)u_kdx=\int_{\mathbb{R}^4}f(u)udx.$$
\end{lemma}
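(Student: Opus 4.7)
The plan is to apply Vitali's convergence theorem to $\{f(u_k)u_k\}$ in $L^1(\mathbb{R}^4)$. The main intermediate goal is a strict sub-critical bound on $\|w_k\|_{W_\gamma^{2,2}}^2$, where $w_k := u_k - u \rightharpoonup 0$ in $W^{2,2}(\mathbb{R}^4)$; once secured, the sharp Adams inequality \eqref{Adams entire space} together with the growth estimate \eqref{xin1} and H\"older's inequality delivers the uniform integrability and tightness required by Vitali, while pointwise a.e.\ convergence $f(u_k)u_k \to f(u)u$ is immediate from $u_k \to u$ a.e.\ and the continuity of $f$.

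To obtain the norm bound, I establish two Brezis--Lieb-type splittings. The classical Brezis--Lieb for the Hilbert norm gives $\|u_k\|_{W_\gamma^{2,2}}^2 = \|u\|_{W_\gamma^{2,2}}^2 + \|w_k\|_{W_\gamma^{2,2}}^2 + o(1)$. Adapting the argument of Lemma \ref{lem3.5be} (whose inputs, namely non-vanishing and non-concentration, are available here from $u \neq 0$ and $c := \lim I_\gamma(u_k) < \frac{16\pi^2}{\alpha_0}$), I also obtain $\int F(u_k)\,dx \to \int F(u)\,dx$. Combining these with $I_\gamma(u_k) \to c$ yields $\lim_k \|w_k\|_{W_\gamma^{2,2}}^2 = 2(c - I_\gamma(u))$. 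The hypothesis $N_\gamma(u) > 0$ together with (A-R) gives $\mu \int F(u)\,dx \leq \int f(u)u\,dx < \|u\|_{W_\gamma^{2,2}}^2$, so
\begin{equation*}
I_\gamma(u) = \frac{1}{2}\|u\|_{W_\gamma^{2,2}}^2 - \int F(u)\,dx > \left(\frac{1}{2} - \frac{1}{\mu}\right)\|u\|_{W_\gamma^{2,2}}^2 > 0,
\end{equation*}
which forces $\limsup_k \|w_k\|_{W_\gamma^{2,2}}^2 < 2c < \frac{32\pi^2}{\alpha_0}$. I then pick $\beta_0 > \alpha_0$ and $r > 1$ so that $r\beta_0 \|w_k\|_{W_\gamma^{2,2}}^2 \leq 32\pi^2$ for large $k$, apply \eqref{Adams entire space} to $w_k$, and conclude via the Vitali theorem, together with the Rellich--Kondrachov compactness $w_k \to 0$ in $L^p_{\mathrm{loc}}(\mathbb{R}^4)$ for every finite $p$, that $\int f(w_k)w_k\,dx \to 0$. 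A parallel Brezis--Lieb splitting $\int f(u_k)u_k\,dx = \int f(u)u\,dx + \int f(w_k)w_k\,dx + o(1)$ then closes the argument.

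The main obstacle is the Brezis--Lieb splittings for the nonlinear integrals in the critical exponential regime, which are not covered by the classical Brezis--Lieb lemma. I will handle them through a truncation dichotomy at level $M$: on $\{|u_k| \leq M\}$ the nonlinearity is polynomially bounded and dominated convergence applies, while on $\{|u_k| > M\}$ the sharp Adams inequality combined with the smallness of this set (a consequence of $\|u_k\|_2$ being bounded) supplies uniform control. The apparent circularity, whereby the Adams bound on $w_k$ requires a norm estimate that itself uses the nonlinear splitting, is resolved precisely by the strict positivity $I_\gamma(u) > 0$ extracted from $N_\gamma(u) > 0$, which produces the sub-critical margin $2c - 2I_\gamma(u) < \frac{32\pi^2}{\alpha_0}$ needed for Adams to close.
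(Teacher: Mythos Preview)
Your energy computation is correct and, in fact, reproduces the paper's key estimate: the identity $\lim_k \|w_k\|^2_{W_\gamma^{2,2}} = 2c - 2I_\gamma(u)$ together with $I_\gamma(u) > 0$ (obtained from $N_\gamma(u) > 0$ and (A--R)) yields the strictly subcritical bound $\lim_k \|w_k\|^2 < 32\pi^2/\alpha_0$. Since $\lim_k \|u_k\|^2(1-\|v_0\|^2) = \lim_k \|u_k\|^2 - \|u\|^2 = \lim_k \|w_k\|^2$, this is exactly the content of \eqref{d.8}.

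The gap lies in how you pass from this bound to the conclusion. Two steps fail as written.

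First, the claim $\int_{\mathbb{R}^4} f(w_k)w_k\,dx \to 0$ does not follow from standard Adams applied to $w_k$ plus $L^p_{\mathrm{loc}}$ convergence. Vitali on $\mathbb{R}^4$ requires \emph{tightness}: $\sup_k \int_{|x|>R} |f(w_k)w_k|\,dx \to 0$ as $R\to\infty$, which you never establish. A translated profile $w_k(x)=w(x-x_k)$ with $|x_k|\to\infty$ satisfies $w_k\rightharpoonup 0$, subcritical norm, and $w_k\to 0$ in every $L^p_{\mathrm{loc}}$, yet $\int f(w_k)w_k$ is constant. Lemma~\ref{lem3.4} supplies tail control for $f(u_k)u_k$, not for $f(w_k)w_k$, and these are not interchangeable for exponential $f$.

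Second, the ``parallel Brezis--Lieb splitting'' $\int f(u_k)u_k = \int f(u)u + \int f(w_k)w_k + o(1)$ is not a standard fact for critical exponential nonlinearities, and your truncation sketch is circular: controlling $\int_{\{|u_k|>M\}} f(u_k)u_k$ uniformly in $k$ via ``Adams plus smallness of the set'' amounts to a bound on $\int (f(u_k)u_k)^{p_0}$ for some $p_0>1$, which is precisely the uniform integrability you are trying to prove.

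The paper avoids both issues by applying a Lions-type concentration-compactness form of the Adams inequality directly to $u_k$ (not to $w_k$): from \eqref{d.7} one deduces $\sup_k \int_{\mathbb{R}^4} (f(u_k)u_k)^{p_0}\,dx < \infty$ for some $p_0>1$, and then Vitali on $B_L$ together with the tail control of Lemma~\ref{lem3.4} gives the result. Your subcritical bound on $\|w_k\|$ is actually the main ingredient in \emph{proving} that concentration-compactness inequality, so the correct repair of your argument is to use $u_k^2 \le (1+\eta)w_k^2 + C_\eta u^2$ and H\"older to bound $\int_{B_L} e^{p_0\beta_0 u_k^2}$ uniformly in $k$, and then proceed as the paper does---rather than routing through a nonlinear Brezis--Lieb splitting that you cannot justify.
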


\begin{proof}
According to Lemma \ref{lem3.4}, we only need to prove that $$\lim_{L\rightarrow +\infty}\lim_{k\rightarrow+\infty}\int_{B_L(0)}f(u_k)u_kdx=
\int_{\mathbb{R}^4}f(u)udx.$$
It follows the lower semicontinuity of the norm in $W^{2,2}(\mathbb{R}^4)$ that
$$\lim\limits_{k\rightarrow\infty}\int_{\mathbb{R}^4}\big(|\Delta u_k|^2+\gamma|u_k|^2\big)dx\geq \int_{\mathbb{R}^4}\big(|\Delta u|^2+\gamma|u|^2\big)dx.$$
We divide the proof into the following case.
\vskip0.1cm

Case 1: $\int_{\mathbb{R}^4}\big(|\Delta u_k|^2+\gamma|u_k|^2\big)dx=\int_{\mathbb{R}^4}\big(|\Delta u|^2+\gamma|u|^2\big)dx$, then according to convexity of the norm and the equivalence of norms, we see that
$u_k\rightarrow u$ in $W^{2,2}(\mathbb{R}^4)$, hence $u_k\rightarrow u$ in $L^{p}(\mathbb{R}^4)$ for any $p\geq 2$. Hence
it follows from Adams inequality in $W^{2,2}(\mathbb{R}^4)$ that for any $p_0>1$, $\sup_{k}\int_{\mathbb{R}^4} \big(f(u_k)u_k\big)^{p_0}dx<\infty$,
which implies that
\begin{equation}\label{con1}
\lim_{L\rightarrow+\infty}\lim_{k\rightarrow\infty}\int_{B_{L}}f(u_k)u_kdx=\lim_{L\rightarrow+\infty}\int_{B_{L}}f(u)udx=\int_{\mathbb{R}^4}f(u)udx.
\end{equation}

Case 2: If $\lim\limits_{k\rightarrow\infty}\int_{\mathbb{R}^4}\big(|\Delta u_k|^2+\gamma|u_k|^2\big)dx>\int_{\mathbb{R}^4}\big(|\Delta u|^2+\gamma|u|^2\big)dx$, we set
 $$ v_k:=\frac{u_k}{\lim\limits_{k\rightarrow\infty}((\|\Delta u_k\|_2^2+\gamma\|u_k\|_2^2))^{\frac{1}{2}}}\ \mbox{and}\ v_0:=\frac{u}{\lim\limits_{k\rightarrow\infty}(\|\Delta u_k\|_2^2+\gamma\|u_k\|_2^2)^{\frac{1}{2}}}.$$
 We claim there exists $q_0>1$ sufficiently $1$ such that
 \begin{equation}\label{d.7}
  q_0(\|\Delta u_k\|_2^2+\gamma\|u_k\|_2^2)<\frac{32\pi^2}{1-(\|\Delta v_0\|_2^2+\gamma\|v_0\|_2^2)}.
  \end{equation}
 Indeed, we can apply the Lemma \ref{lem3.5be} and (A-R) condition to obtain
  \begin{equation}\begin{split}\label{d.8}
  &\lim\limits_{k\rightarrow\infty}((\|\Delta u_k\|_2^2+\gamma\|u_k\|_2^2))\big(1-(\|\Delta v_0\|_2^2+\gamma\|v_0\|_2^2)\big)\\
  &\ \ =\lim\limits_{k\rightarrow\infty}(\|\Delta u_k\|_2^2+\gamma\|u_k\|_2^2))\Big(1-\frac{\|\Delta u\|_2^2+\gamma\|u\|_2^2)}{\|\Delta u_k\|_2^2+\gamma\|u_k\|_2^2)}\Big)\\
 &\ \ =2\lim\limits_{k\rightarrow +\infty}I_{\gamma}(u_k)+2\int_{\mathbb{R}^4}F(u_k)dx-2I(u)-2\int_{\mathbb{R}^4}F(u)dx\\
&\ \ <\frac{32\pi^2}{\alpha_0},
\end{split}\end{equation}
where the last inequality holds because $\int_{\mathbb{R}^4}\big(|\Delta u|^2+\gamma|u|^2\big)dx>\int_{\mathbb{R}^4}f(u)udx>2\int_{\mathbb{R}^4}F(u)udx$.
Combining the above estimate with Adams inequality in $W^{2,2}(\mathbb{R}^4)$, one can derive that there exists $p_0>1$ such that
\begin{eqnarray}\label{d.9}
\sup_{k}\int_{\mathbb{R}^4}\big(f(u_k)u_k\big)^{p_0}dx<\infty.
\end{eqnarray}
Then it follows Vitali convergence theorem that $$\lim_{L\rightarrow+\infty}\lim_{k\rightarrow\infty}\int_{B_{L}}f(u_k)u_kdx=\lim_{L\rightarrow+\infty}\int_{B_{L}}f(u)udx=\int_{\mathbb{R}^4}f(u)udx.$$
then we accomplishes the proof of Lemma \ref{lem3.6}.
\end{proof}

Now we are in position to give the existence of ground-state solutions for the bi-harmonic equation with the constant potential $\gamma<\gamma^*$.
\begin{proof}[Proof of the first part of Theorem \ref{thm1}]
Since $\gamma<\gamma^{*}$, we  will prove that $m_{\gamma}$ is achieved by some non-zero function $u$. We argue this by contradiction. Suppose that some $u=0$, then
\begin{equation}\begin{split}
\lim_{k\rightarrow \infty}(\|\Delta u_k\|_2^2+\gamma\|u_k\|_2^2)&=2\lim_{k\rightarrow \infty}I_{\lambda}(u_k)+2\int_{\mathbb{R}^4}F(u_k)dx\\
&=2\lim_{k\rightarrow \infty}I_{\lambda}(u_k)=2m_\gamma<\frac{32\pi^2}{\alpha_0}
\end{split}\end{equation}
Then it follows from the Adams inequality in $\mathbb{R}^4$ and Lemma \ref{lem3.4} that
$$\lim_{k\rightarrow \infty}\int_{\mathbb{R}^4}f(u_k)u_kdx=0,$$
which implies that $$0<m_\gamma=\lim_{k\rightarrow \infty}(\|\Delta u_k\|_2^2+\gamma\|u_k\|_2^2)=\lim_{k\rightarrow \infty}\int_{\mathbb{R}^4}f(u_k)u_kdx=0,$$
which is a contradiction. This proves $u\neq 0$.

Next, we claim that $$\|\Delta u\|_2^2+\gamma\|u\|_2^2\leq \int_{\mathbb{R}^4}f(u)udx.$$
Suppose this is false, that is,
\begin{equation}\label{t1}
\|\Delta u\|_2^2+\gamma\|u\|_2^2>\int_{\mathbb{R}^4}f(u)udx.
\end{equation}
In view of Lemma \ref{m} and Lemma \ref{lem3.6}, we derive that  $$\lim_{k\rightarrow \infty}\int_{\mathbb{R}^4}f(u_k)u_kdx=\int_{\mathbb{R}^4}f(u)udx.$$
This implies that
\begin{equation}\begin{split}
\|\Delta u_0\|_2^2+\gamma\|u_0\|_2^2&\leq \|\Delta u_k\|_2^2+\gamma\|u_k\|_2^2\\
&=\lim_{k\rightarrow \infty}\int_{\mathbb{R}^4}f(u_k)u_kdx\\
&=\int_{\mathbb{R}^4}f(u)udx<\|\Delta u\|_2^2+\gamma\|u\|_2^2,
\end{split}\end{equation}
which is a contradiction. This proves the claim.
\vskip 0.1cm
Since $$\|\Delta u\|_2^2+\gamma\|u\|_2^2\leq \int_{\mathbb{R}^4}f(u)udx,$$
there exists $\gamma_{0}\in (0,1]$ such that $\gamma_{0}u\in \mathcal{N}_\gamma$. According to the definition of  $m_{\gamma}$, we derive that
\begin{equation}\begin{split}
m_{\gamma}\leq I_\gamma(\gamma_{0}u)&=\frac{1}{2}\int_{\mathbb{R}^4}\big(f(\gamma_{0}u)(\gamma_{0}u)-2F(\gamma_{0}u)\big)dx\\
&\leq \frac{1}{2}\int_{\mathbb{R}^4}\big(f(u)(u)-2F(u)\big)dx\\
&\leq \lim_{k\rightarrow \infty}\frac{1}{2}\int_{\mathbb{R}^4}\big(f(u_k)(u_k)-2F(u_k)\big)dx\\
&=\lim_{k\rightarrow \infty}I_{\gamma}(u_k)=m_{\gamma}.
\end{split}\end{equation}
This implies that $\gamma_0=1$ and $u\in \mathcal{N}_\gamma$ and $I_\gamma(u)=m_{\gamma}$. This means that the equation
(\ref{3.1}) admits a ground-state solution for any $\gamma\in(0,\gamma^{\ast})$.
\medskip
\end{proof}

\bigskip
In order to finish the proof of Theorem \ref{thm1}, we need the following result.

\begin{lemma}\label{lem3.8}
 $m_\gamma<\frac{16\pi^2}{\alpha_0}$ if and only if $\gamma<\gamma^{*}$.
\end{lemma}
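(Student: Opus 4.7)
The forward direction ``$\gamma<\gamma^{*}\Rightarrow m_\gamma<\frac{16\pi^{2}}{\alpha_{0}}$'' is exactly Lemma \ref{m}; moreover, if $\gamma^{*}=+\infty$ then the statement reduces to Lemma \ref{m} itself. It therefore suffices to prove the converse when $\gamma^{*}<+\infty$: assuming $m_\gamma<\frac{16\pi^{2}}{\alpha_{0}}$, I want to deduce $\gamma<\gamma^{*}$. The key observation is that the proof of the first part of Theorem \ref{thm1} never really required the hypothesis $\gamma<\gamma^{*}$ directly; what it actually used was the consequence $m_\gamma<\frac{16\pi^{2}}{\alpha_{0}}$, which is precisely what allows $\lim_{k}(\|\Delta u_{k}\|_{2}^{2}+\gamma\|u_{k}\|_{2}^{2})=2m_\gamma<\frac{32\pi^{2}}{\alpha_{0}}$ and unlocks the concentration--compactness lemmas \ref{lem3.3}, \ref{lem3.4} and \ref{lem3.6}. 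Rerunning that argument verbatim under the weaker hypothesis $m_\gamma<\frac{16\pi^{2}}{\alpha_{0}}$ then produces a nontrivial ground-state solution $u\in W^{2,2}(\mathbb{R}^{4})$ with $I_\gamma(u)=m_\gamma$.

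Because $u$ is a weak solution of $(-\Delta)^{2}u+\gamma u=f(u)$ in $\mathbb{R}^{4}$, the biharmonic Pohozaev identity in dimension $n=4$ collapses (the coefficient $\tfrac{n-4}{2}$ of $\|\Delta u\|_{2}^{2}$ vanishes) to
$$\gamma\|u\|_{2}^{2}=2\int_{\mathbb{R}^{4}}F(u)\,dx,$$
so $u\in\mathcal{P}_\gamma$; combining this with $I_\gamma(u)=m_\gamma$ gives $I_\gamma(u)=\tfrac{1}{2}\|\Delta u\|_{2}^{2}$, whence the \emph{strict} bound
$$\|\Delta u\|_{2}^{2}=2m_\gamma<\frac{32\pi^{2}}{\alpha_{0}}=R(F).$$

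Finally, set $\lambda^{*}:=\sqrt{R(F)/\|\Delta u\|_{2}^{2}}>1$, so that $\lambda^{*}u$ lies in the competing class $\{\|\Delta v\|_{2}^{2}\le R(F)\}$ defining $\gamma^{*}=C_{A}^{R(F)}$. For $g(\lambda):=\lambda^{-2}\int_{\mathbb{R}^{4}}F(\lambda u)\,dx$, the Ambrosetti--Rabinowitz condition (ii) yields
$$g'(\lambda)=\frac{1}{\lambda^{3}}\int_{\mathbb{R}^{4}}\bigl[\lambda u\,f(\lambda u)-2F(\lambda u)\bigr]dx\;\geq\;\frac{\mu-2}{\lambda^{3}}\int_{\mathbb{R}^{4}}F(\lambda u)\,dx\;>\;0,$$
the strict positivity using that $F(t)>0$ for $t\neq 0$ (from the strict inequality ``$0<\mu F$'' in (ii)) together with $u\not\equiv 0$. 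Thus $g$ is strictly increasing on $(0,+\infty)$, and combined with the Pohozaev identity this gives
$$\gamma^{*}\;\geq\;\frac{2\int_{\mathbb{R}^{4}}F(\lambda^{*}u)\,dx}{\|\lambda^{*}u\|_{2}^{2}}\;=\;\frac{2g(\lambda^{*})}{\|u\|_{2}^{2}}\;>\;\frac{2g(1)}{\|u\|_{2}^{2}}\;=\;\frac{2\int_{\mathbb{R}^{4}}F(u)\,dx}{\|u\|_{2}^{2}}\;=\;\gamma,$$
completing the argument. The principal obstacle is upgrading $\gamma\le\gamma^{*}$ to the \emph{strict} inequality $\gamma<\gamma^{*}$: this rests on the ground-state $u$ lying \emph{strictly} inside the Adams sublevel $\{\|\Delta v\|_{2}^{2}\le R(F)\}$ (which Pohozaev extracts from $m_\gamma<\frac{16\pi^{2}}{\alpha_{0}}$) together with the strict monotonicity of $g$ afforded by the strict positivity of $F$ in the (A-R) condition.
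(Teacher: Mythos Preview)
Your proposal is correct and follows essentially the same route as the paper: both observe that the existence machinery (Lemmas \ref{lem3.3}--\ref{lem3.6} and the proof of the first part of Theorem \ref{thm1}) only uses the consequence $m_\gamma<\frac{16\pi^{2}}{\alpha_{0}}$ rather than $\gamma<\gamma^{*}$ itself, produce a ground state $u$, place it on $\mathcal{P}_\gamma$ via Pohozaev so that $\|\Delta u\|_{2}^{2}=2m_\gamma<R(F)$, and then scale $u$ to the boundary $\|\Delta v\|_{2}^{2}=R(F)$ and invoke the strict monotonicity of $s\mapsto s^{-2}\int F(su)$ (from the (A--R) condition) to conclude $\gamma^{*}>\gamma$. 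The only cosmetic difference is that the paper also records the equality $M_{p}=m_\gamma$ along the way, which you bypass.
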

\begin{proof}
Recalling Lemma \ref{m}, we have proved that if $\gamma<\gamma^{*}$, then $m_\gamma<\frac{16\pi^2}{\alpha_0}$. Hence we only need to prove that if $m_\gamma<\frac{16\pi^2}{\alpha_0}$, then $\gamma<\gamma^{*}$.
Obviously, if the $\gamma^{*}=+\infty$, then $\gamma<\gamma^{*}$ and the
proof is complete. Therefore, without loss of generality, we may assume that
$\gamma^{*}<+\infty$. From the previously discussion, we know that if $m_\gamma<\frac{16\pi^2}{\alpha_0}$, then $m_{\gamma}$ could be achieved by some function $u\in W^{2,2}(\mathbb{R}^4)\setminus \{0\}$ which is a ground state solution to equation \eqref{3.1}. Obviously, we have $u\in \mathcal{P}_{\gamma}$, which implies that $M_{p}\leq m_{\gamma}$. Recalling Lemma \ref{m}, we have already proved that $M_p\geq m_\gamma$. Combining these facts, we conclude that $M_p$ is also achieved by $u_\gamma$.
Then according to the definition of the $M_{p}$, we have $\left\Vert
\Delta u\right\Vert _{2}^{2}<32\pi^{2}/\alpha_{0}$ and $\gamma\left\Vert
u\right\Vert _{2}^{2}=2\int_{\mathbb{R}^{4}}F(u)dx$. Define
\[
g(s)=\frac{2}{s^{2}\left\Vert u\right\Vert _{2}^{2}}\int_{\mathbb{R}^{4}%
}F(su)dx,
\]
then $g(1)=\gamma$. From the (A-R) condition, then it is easy to
see that $g(s)$ is monotone increasing. If we set $v=\frac{R(F)^{1/2}%
}{\left\Vert \Delta u\right\Vert _{2}}u$, then $\Vert\Delta v\Vert_{2}%
^{2}=R(F)$ and \
\[
\gamma^{*}\geq\frac{2}{\Vert v\Vert_{2}^{2}}\int_{\mathbb{R}^{4}%
}F(v)dx=g(\frac{R(F)^{1/2}}{\Vert\Delta u\Vert_{2}})>g(1)=\gamma.
\]
\end{proof}
Now, we give the proof for the non-existence of ground-state solutions for the bi-harmonic equation with the constant potential $\gamma>\gamma^*$.
\begin{proof}[Proof of the second part of Theorem \ref{thm1}]
 We argue this by contradiction. We assume that there exists $\gamma_0>\gamma^{*}$ such that the equation (\ref{3.1}) admits a ground-state solution. From Lemma \ref{upper} and Lemma \ref{lem3.8}, we know that $m_{\gamma_0}=\frac{16\pi^2}{\alpha_0}$. Since $m_{\gamma_0}$ could be achieved by some function $u_0\in W^{2,2}(\mathbb{R}^4)$, Direct calculation gives that for any $\gamma \in (\gamma^{*},\gamma_0)$, $m_{\gamma}<m_{\gamma_0}=\frac{16\pi^2}{\alpha_0}$. This is contradiction with the fact: $m_\gamma<\frac{16\pi^2}{\alpha_0}$ if and only if $\gamma<\gamma^{*}$.  This indicates that for any $\gamma>\gamma^{*}$, equation
(\ref{3.1}) does not admit a ground-state solution.

\end{proof}

\section{The Adams inequality with degenerate potentials in $\mathbb{R}^4$: Proof of Theorem \ref{Ad}}
In this section, we will prove the critical Adams inequality involving degenerate potential, namely we will give the proof of Theorem \ref{Ad}. For this purpose, we need the following lemma.
\begin{lemma}\label{imbedding}
Assume that $u\in W^{2,2}(\mathbb{R}^4)$ such that $\int_{\mathbb{R}^4}\left(|\Delta u|^2+V(x)u^2\right)dx<+\infty$, where
$V(x)$ satisfies the assumption (V). Then there exits some constant $c>0$ depending on $\delta$ and $c_0$ such that $$\int_{\mathbb{R}^4}u^2dx\le c\int_{\mathbb{R}^4}\left(|\Delta u|^2+V(x)u^2\right)dx.$$.
\end{lemma}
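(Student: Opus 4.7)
The plan is to split the $L^{2}$ norm according to the support of $V$. On the exterior region $\mathbb{R}^4\setminus B_{2\delta}(0)$ the hypothesis (V1) gives $V\ge c_0$, so
$$\int_{\mathbb{R}^4\setminus B_{2\delta}(0)}u^2\,dx\le\frac{1}{c_0}\int_{\mathbb{R}^4}V(x)u^2\,dx,$$
and the whole difficulty of the lemma is to establish the interior estimate
\begin{equation*}
\int_{B_{2\delta}(0)}u^2\,dx\le c\int_{\mathbb{R}^4}\bigl(|\Delta u|^2+V(x)u^2\bigr)\,dx,
\end{equation*}
which I would attack by contradiction.

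Suppose this interior bound fails. Rescaling, there exists a sequence $\{u_n\}\subset W^{2,2}(\mathbb{R}^4)$ with $\|u_n\|_{L^2(\mathbb{R}^4)}=1$ and
$$\int_{\mathbb{R}^4}\bigl(|\Delta u_n|^2+V(x)u_n^2\bigr)\,dx\longrightarrow 0.$$
The exterior inequality above forces $\|u_n\|_{L^2(\mathbb{R}^4\setminus B_{2\delta}(0))}\to 0$, while the bi-Laplacian piece gives $\|\Delta u_n\|_{L^2(\mathbb{R}^4)}\to 0$. Using the Plancherel interpolation identity
$$\|\nabla u_n\|_2^{2}=\int_{\mathbb{R}^4}|\xi|^2|\widehat{u_n}(\xi)|^2\,d\xi\le \|u_n\|_2\,\|\Delta u_n\|_2,$$
I conclude $\|\nabla u_n\|_2\to 0$, hence $\{u_n\}$ is bounded in $W^{2,2}(\mathbb{R}^4)$.

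Passing to a subsequence, $u_n\rightharpoonup u_\infty$ weakly in $W^{2,2}(\mathbb{R}^4)$, and by the Rellich--Kondrachov theorem $u_n\to u_\infty$ strongly in $L^2_{\mathrm{loc}}(\mathbb{R}^4)$. Since $\Delta u_n\to 0$ strongly in $L^2$, the distributional limit yields $\Delta u_\infty=0$. Thus $u_\infty\in L^2(\mathbb{R}^4)$ is harmonic, and the classical mean-value/Liouville argument (bounding $|u_\infty(x)|$ by $C_4 R^{-2}\|u_\infty\|_{L^2(\mathbb{R}^4)}$ and letting $R\to\infty$) forces $u_\infty\equiv 0$. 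Therefore $u_n\to 0$ in $L^2(B_{2\delta}(0))$, and combined with the exterior vanishing this contradicts $\|u_n\|_2=1$.

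The main obstacle is organising the compactness step: one must first upgrade the crude control on $\|u\|_2$ and $\|\Delta u\|_2$ to a full $W^{2,2}$ bound (the role of the Plancherel interpolation), so that Rellich compactness is available on bounded sets, and then glue this local strong convergence with the exterior decay supplied by $V\ge c_0$ on $\mathbb{R}^4\setminus B_{2\delta}(0)$. The Liouville-type rigidity for $L^2$-harmonic functions on $\mathbb{R}^4$ is the ingredient that finally closes the contradiction, and it is precisely the absence of any lower bound on $V$ inside $B_\delta(0)$ that makes this compactness-plus-rigidity route, rather than a direct Poincaré-type computation, the natural approach.
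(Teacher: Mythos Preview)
Your argument is correct, but it proceeds along a genuinely different route than the paper. The paper gives a direct, constructive estimate: it fixes a smooth cutoff $\eta$ with $\eta\equiv 1$ on $B_{2\delta}$ and $\eta\equiv 0$ outside $B_{4\delta}$, applies the Poincar\'e-type inequality $\int_{B_{4\delta}}|u\eta|^2\le c\delta^4\int_{B_{4\delta}}|\Delta(u\eta)|^2$, expands $\Delta(u\eta)=\eta\Delta u+u\Delta\eta+2\nabla u\cdot\nabla\eta$, and absorbs the cross term via the elementary interpolation $\int|\nabla u|^2\lesssim\int(|u|^2+|\Delta u|^2)$ on the annulus. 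This yields $\int_{B_{2\delta}}u^2\le c_1\int_{B_{4\delta}\setminus B_{2\delta}}u^2+c_1\int_{B_{4\delta}}|\Delta u|^2$, and since $V\ge c_0$ on the annulus the proof concludes. So despite your closing remark that a ``direct Poincar\'e-type computation'' is not natural here, that is exactly what the paper does.

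Your compactness-plus-Liouville strategy is cleaner in that it sidesteps the cutoff algebra and the handling of the gradient cross term; the Fourier interpolation $\|\nabla u\|_2^2\le\|u\|_2\|\Delta u\|_2$ is an elegant way to secure the $W^{2,2}$ bound needed for Rellich. The trade-off is that your proof is non-constructive (no explicit constant in terms of $\delta$ and $c_0$) and invokes slightly heavier machinery (Rellich--Kondrachov and the $L^2$-Liouville theorem), whereas the paper's argument is entirely local and elementary. One small expository point: when you write ``suppose this interior bound fails'' and then normalise $\|u_n\|_{L^2(\mathbb{R}^4)}=1$, it would be cleaner to phrase the contradiction hypothesis as the failure of the \emph{full} inequality, since that is what the normalisation actually reflects; the logic is unaffected.
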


\begin{proof}
Choose the cutoff function $\eta$ such that $\eta=1$ in $B_{2\delta} $ and $\eta=0$ in  $\mathbb{R}^4\backslash B_{4\delta }$. Obviously, $|\eta|\leq 1$ and  $|\Delta \eta|\leq \frac{c}{\delta^2}$. By the Poincare inequality and Young inequality, we derive that

\begin{align*}
   \int\limits_{{B_{4\delta }}} {{{\left| {u\eta } \right|}^2}} dx & \le c{\delta ^4}\int\limits_{{B_{4\delta }}} {{{\left| {\Delta\left( {u\eta } \right)} \right|}^2}} dx \\
  & \le  c{\delta ^4}\int\limits_{{B_{4\delta }}} {{{\left| {\eta \Delta u + u\Delta \eta +\nabla u \nabla \eta} \right|}^2}} dx \\
  &  \le c{\delta ^4}\int\limits_{{B_{4\delta }}} {{{\left| {\eta \Delta u} \right|}^2}} dx +c\delta^4\int\limits_{{B_{4\delta }}\backslash {B_{2\delta} }} {{{\left| u \right|}^2|\Delta \eta|^2}} dx + c\delta^4 \int_{B_{4\delta}\setminus B_{2\delta}}|\nabla u \nabla \eta|^2dx  \\
  &\leq c{\delta ^4}\int\limits_{{B_{4\delta }}} {{{\left| {\Delta u} \right|}^2}}dx+c\int\limits_{{B_{4\delta }}\backslash {B_{2\delta} }} {{{\left| u \right|}^2}} dx+c\delta^2 \int_{B_{4\delta}\setminus B_{2\delta}}\big(|u|^2+|\Delta u|^2\big)dx.
  \end{align*}

This gives that there exists $c_1$ depending on $\delta$ such that $$\int_{B_{2\delta}}|u_k|^2dx\leq c_1\int_{B_{4\delta}\setminus B_{2\delta}}|u|^2dx+c_1\int_{B_{4\delta}}|\Delta u|^2dx,$$
which together with $V(x)\geq c_0$ in $\mathbb{R}^4\setminus B_{2\delta}$ implies that $$\int_{\mathbb{R}^4}u^2dx< c\int_{\mathbb{R}^4}\left(|\Delta u|^2+V(x)u^2\right)dx,$$
where $c$ depends on $c_0$ and $\delta$.

\end{proof}

Now, we are in position to prove Theorem \ref{Ad}.
\medskip

\emph{Proof of Theorem \ref{Ad}:}
Since $C_{c}^{\infty}(\mathbb{\mathbb{R}}^4)$ is dense in $W^{2,2}(\mathbb{R}^4)$, we may assume that $u$ is a compactly supported smooth function.
Furthermore, we assume that $\int_{\mathbb{R}^4}V(x)u^2dx>0$. In fact, if $\int_{\mathbb{R}^4}V(x)u^2dx=0$, then obviously $\textrm{supp } u\subseteq B_{2\delta}(0)$, through the classical Adams inequality on bounded domain, we have
$$\int_{\mathbb{R}^2}\big(e^{32\pi^2 u^2}-1\big)dx=\int_{B_{2\delta}}\big(e^{32\pi^2 u^2}-1\big)dx<c,$$  and the proof of Theorem \ref{Ad} is completed.

Hence it remains to consider the case when $\int_{\mathbb{R}^4}V(x)u^2dx>0$. Set $$A(u):=\left(\int_{\mathbb{R}^4}V(x)u^2dx\right)^{\frac{1}{2}}$$
and  $ \Omega (u):=\left \{ x\in \mathbb{R}^4| \ u>A(u) \right \}$ and $ \tilde{\Omega} (u):=\left \{ x\in \mathbb{R}^4| \ -u>A(u) \right \}$. Then $A(u)<1$ and
\begin{align*}
\int_{\Omega (u)\cap B^c_{2\delta}}u^2dx& \geq  \int_{\Omega (u)\cap B^c_{2\delta}}A^2(u)dx\\
 & = \left(\int_{\mathbb{R}^4}V(x)u^2dx\right) \left |   \Omega (u)\cap B^c_{2\delta}     \right |,
\end{align*}
hence by (V1) we get $$ \left |   \Omega (u)\cap B^c_{2\delta} \right |\leqslant \frac{ \int_{\Omega (u)\cap B^c_{2\delta}}u^2dx}{\int_{\mathbb{R}^4}V(x)u^2dx} \leq \frac{1}{c_0},$$
and then $\left | \Omega (u)\right |\leq |B_{2\delta}|+\frac{1}{c_0}.$ Similar, we can also obtain $\left | \tilde{\Omega} (u)\right |\leq |B_{2\delta}|+\frac{1}{c_0}.$

Now, we rewrite \begin{align*}\int_{\mathbb{R}^{4}}\left (  e^{32\pi^2 |u|^{2}}-1 \right )dx&= \int_{\Omega (u)}\left (  e^{32\pi^2 |u|^{2}}-1 \right )dx+ \int_{\tilde{\Omega} (u)}\left (  e^{32\pi^2 |u|^{2}}-1 \right )dx \\
&\ \ \ +\int_{\mathbb{R}^4\setminus \ (\Omega (u)\cup \tilde{\Omega} (u))}\left ( e^{32\pi^2 |u|^{2}}-1 \right )dx\\
&:=I_{1}+I_{2}+I_{3},\end{align*}
and we will prove that both $I_1$, $I_2$ and $I_3$ are bounded by a constant $c$.
\vskip0.1cm

First, we estimate $I_3$. Since $\int_{\mathbb{R}^4}\left(|\Delta u|^2+V(x)u^2\right)dx\leq 1$, through Lemma \ref{imbedding}, we know that $\int_{\mathbb{R}^4}u^2dx$ is also bounded by some constant, then

\begin{align*} I_{3}  \leqslant \int_{\{|u(x)|<1\}}  \sum_{k=1}^{\infty} \frac{\left ( 32\pi^2 \right ) ^{k}}{k !}|u|^{2k}dx  \leqslant \sum_{k=1}^{\infty} \frac{\left ( 32\pi^2 \right ) ^{k}}{k !} \int_{\mathbb{R}^4}u^2dx  \leqslant c. \end{align*}

Since the estimate of $I_1$ and $I_3$ is similar, we only estimate $I_1$. Set $$v(x)=u(x)-A(u) \quad \text{in } \Omega(u),$$ then $v \in W^{2,2}(\Omega(u))$ with $v=0$ on the $\partial\Omega(u)$. Direct calculation gives that in $\Omega(u)$,
\begin{align*}
u^2(x) &=\left ( v(x)+A(u) \right ) ^2 \\
&=v^2(x)+A^2(u)+2v(x)A(u)\\
&\le v^2(x)+A^2(u)+v^2(x)A^2(u)+1\\
&=v^2(x)(1+A^2(u))+A^2(u)+1.
\end{align*}

Let $w(x)=v(x)(1+A^2(u)) ^{\frac{1}{2} }$, then
$w(x)\in W^{2,2}( \Omega(u))\cap W^{1,2}_{0}(\Omega(u)) $, $$u^2(x)\le w^2(x)+1+A^2(u),$$  $$\Delta w(x)=(1+A^2(u))^{\frac{1}{2} }\Delta v(x),$$
and \begin{align*}
\int_{\Omega\left ( u \right )  } \left | \Delta w(x) \right |^2dx &= \left ( 1+A^2(u) \right )\int_{\Omega\left ( u \right ) }\left | \Delta v(x) \right |^2dx\\
&\le  \left ( 1+A^2(u)\right )\left ( 1-\int_{\mathbb{R}^4}V(x)u^2dx  \right ) \\
&=\left ( (1+\int_{\mathbb{R}^4}V(x)u^2dx ) \right )\left ( 1-\int_{\mathbb{R}^4}V(x)u^2dx  \right ) \\
&\le 1.
\end{align*}
Then using the Adams inequalities on bounded domain with the Navier boundary  (see \cite{Ta}), we get

\begin{align*}
I_1&\le \int_{\Omega (u)}\left (  e^{32\pi^2 |u|^{2}}-1 \right )dx
\le  e^{32\pi^2(1+A^2(u))}\int_{\Omega (u)} e^{32\pi^2 |w|^{2}} dx\le c,
 \end{align*}
and the proof of Theorem \ref{Ad} is finished.

\section{Existence of the ground-state solution of bi-harmonic equations with degenerate potential: The proof of Theorem \ref{thm3}}

In this section, we are concerned with the ground states of the following quasilinear bi-harmonic equation (\ref{degen}), where $f(t)$ has the critical exponential growth satisfying (i)-(v) and the potential $V\left(  x\right)\geq 0$ satisfies (V1) and (V2).

The associated functional and Nehari Manifold are
$$I_{V}(u)=\frac{1}{2}\int_{\mathbb{R}^{4}}\left(  \left\vert \Delta
u\right\vert ^{2}+V(x)\left\vert u\right\vert ^{2}\right)
dx-\int_{\mathbb{R}^{4}}F(u)dx$$ and
$$\mathcal{N}_{V}=\left\{  \left.  u\in W^{2,2}\left(  \mathbb{R}^{4}\right)
\right\vert u\neq0,N_V\left(  u\right)  =0\right\},$$
respectively, where
$$N_{V}\left(  u\right)  =\int_{\mathbb{R}^{4}}\left(  \left\vert \Delta
u\right\vert ^{2}+V(x)\left\vert u\right\vert ^{2}\right)
dx-\int_{\mathbb{R}^{4}}f(u)udx.$$
One can easily verify that if $u\in\mathcal{N}_V$, then
$$I_V(u)=\frac{1}{2}\int_{\mathbb{R}^4}(f(u)u-2F(u))dx.$$

Set $m_{V}=\inf \{I_V(u)\ |\ N_V(u)=0\}$, we will prove that if $V_{\infty}<\gamma^{*}$, then $m_{V}$ is achieved by some function $u\in W^{2,2}(\mathbb{R}^4)$.

\begin{lemma}
\label{impor}If $V_\infty<\gamma^{*}$, then
\begin{equation}
0<m_V<\frac{16\pi^2}{\alpha_0}. \label{the rang}%
\end{equation}
\end{lemma}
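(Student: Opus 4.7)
The plan is to establish the two bounds in the lemma separately: the lower bound $m_V>0$ by a standard Nehari-manifold argument adapted from Lemma~\ref{upper}, and the upper bound $m_V<16\pi^2/\alpha_0$ by a fiber-map comparison with the constant-potential problem at level $\gamma=V_\infty$.

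For the lower bound, I would argue by contradiction. Suppose there exists $\{u_k\}\subset\mathcal{N}_V$ with $I_V(u_k)\to0$. Combining $I_V(u_k)=\tfrac12\int(f(u_k)u_k-2F(u_k))\,dx$ with the (A-R) condition yields $\bigl(\tfrac12-\tfrac1\mu\bigr)\|u_k\|^2_{W_V^{2,2}}\le I_V(u_k)\to0$, and then Lemma~\ref{imbedding} forces $\|u_k\|_2\to0$ as well. Normalizing in the Nehari identity and applying \eqref{xin1}, H\"{o}lder's inequality, and the critical Adams inequality of Theorem~\ref{Ad} to $u_k/\|u_k\|_{W_V^{2,2}}$ (for $\|u_k\|_{W_V^{2,2}}$ small one can choose $\beta_0,p>1$ so that $p\beta_0\|u_k\|^2_{W_V^{2,2}}\le 32\pi^2$, bringing the exponential exponent into the admissible range of Theorem~\ref{Ad}), one obtains
$$
1=\frac{1}{\|u_k\|^2_{W_V^{2,2}}}\int_{\mathbb{R}^4} f(u_k)u_k\,dx\longrightarrow 0,
$$
a contradiction. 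Hence $m_V>0$.

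For the upper bound, the hypothesis $V_\infty\in(0,\gamma^*)$ and Theorem~\ref{thm1} furnish a ground state $u_\infty\in W^{2,2}(\mathbb{R}^4)$ of the constant-potential equation with $\gamma=V_\infty$; in particular $N_{V_\infty}(u_\infty)=0$ and $I_{V_\infty}(u_\infty)=m_{V_\infty}$, and by Lemma~\ref{m} we have $m_{V_\infty}<16\pi^2/\alpha_0$. Under condition (v), the standard fiber-map analysis shows that for every $u\neq0$ and every continuous nonnegative weight $W$, the map $t\mapsto I_W(tu)$ on $t\ge 0$ attains its unique maximum exactly at the value $t_W(u)>0$ for which $t_W(u)u\in\mathcal{N}_W$; in particular $\max_{t\ge0} I_{V_\infty}(tu_\infty)=I_{V_\infty}(u_\infty)=m_{V_\infty}$, while $I_V(t_V(u_\infty)u_\infty)=\max_{t\ge0}I_V(tu_\infty)$.

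The pointwise estimate $V(x)\le V_\infty$ provided by (V2) then gives
$$
I_V(tu_\infty)=I_{V_\infty}(tu_\infty)-\frac{t^2}{2}\int_{\mathbb{R}^4}(V_\infty-V(x))|u_\infty|^2\,dx\ \le\ I_{V_\infty}(tu_\infty),\qquad \forall\,t\ge0,
$$
and therefore
$$
m_V\ \le\ I_V(t_V(u_\infty)u_\infty)\ \le\ \max_{t\ge0} I_{V_\infty}(tu_\infty)\ =\ m_{V_\infty}\ <\ \frac{16\pi^2}{\alpha_0}.
$$
The hard part is the lower bound: without a uniform positive lower bound on $V$, the classical route from the Sobolev norm $\|\cdot\|_{W_V^{2,2}}$ to the $L^2$-norm is unavailable, so one must rely essentially on the degenerate-potential embedding in Lemma~\ref{imbedding} together with the degenerate-potential Adams inequality of Theorem~\ref{Ad}. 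The upper bound, by contrast, reduces to the clean pointwise monotonicity of $I_W(tu)$ in the potential $W$ combined with the already-established existence of a ground state in the constant-potential case.
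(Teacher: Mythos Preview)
Your proof is correct and, for the lower bound $m_V>0$, essentially identical to the paper's: both argue by contradiction, use the (A-R) condition to drive $\|u_k\|_{W^{2,2}_V}\to 0$, and then combine the growth estimate \eqref{xin1} with the degenerate-potential Adams inequality (Theorem~\ref{Ad}) to reach a contradiction from the normalized Nehari identity.

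For the upper bound the two arguments diverge slightly. You use the fiber-map/mountain-pass characterization $m_V\le\max_{t\ge 0}I_V(tu_\infty)\le \max_{t\ge 0}I_{V_\infty}(tu_\infty)=m_{V_\infty}$, invoking only the pointwise monotonicity $V\le V_\infty$. The paper instead uses the consequence of (v) recorded in Remark~\ref{adrem}, namely that $t\mapsto f(t)t-2F(t)$ is increasing: from $N_V(w)<0$ it finds $t\in(0,1)$ with $tw\in\mathcal{N}_V$ and then compares $I_V(tw)=\tfrac12\int\bigl(f(tw)tw-2F(tw)\bigr)<\tfrac12\int\bigl(f(w)w-2F(w)\bigr)=m_\infty$ directly on the Nehari manifold. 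Both routes are standard; yours is perhaps cleaner, while the paper's avoids invoking uniqueness of the fiber-map maximum.

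One point worth flagging: the paper's version of the argument yields the \emph{strict} inequality $m_V<m_\infty$, recorded as \eqref{compare}, and this strictness is essential later in the proof of Lemma~\ref{lem4.3}. Your chain as written only gives $m_V\le m_{V_\infty}$. You recover strictness immediately by observing that $\int_{\mathbb{R}^4}(V_\infty-V(x))|u_\infty|^2\,dx>0$, since $V\equiv 0$ on $B_\delta(0)$ by (V1) while the ground state $u_\infty$ of the constant-potential equation cannot vanish on an open set; this makes the inequality $I_V(tu_\infty)\le I_{V_\infty}(tu_\infty)$ strict for every $t>0$. You should make this explicit if your proof is to serve the same downstream role as the paper's.
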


\begin{proof}
We first show that $m_V>0$.  We prove this by contradiction. Assume that there
exists some sequence $u_{k}\in$ $\mathcal{N}_V$ such that $I_V\left(
u_{k}\right)  \rightarrow0$, that is,%
\[
\lim\limits_{k\rightarrow+\infty}\int_{\mathbb{R}^{4}}\big(f(u_{k}%
)u_{k}-2F(u_{k})\big)dx=0,
\]
which together with (A-R) condition and $u_{k}\in\mathcal{N}_V$ yields that
\begin{equation}
\lim_{k\rightarrow+\infty}\int_{\mathbb{R}^{4}}\left(|\Delta u_{k}|^{2}%
+V(x)|u_{k}|^{2}\right)dx=0.\label{t1c1}%
\end{equation}
On one hand, it follows from \eqref{xin1} and $u_{k}\in\mathcal{N}_V$ that
\begin{equation}%
\begin{split}
1&=\int_{\mathbb{R}^{4}}f(u_{k})\frac{u_{k}}{\Vert u_{k}\Vert^2_{W^{2,2}_{V}(\mathbb{R}^4)}}dx\\
&\leq\int_{\mathbb{R}^{4}}\big(\varepsilon\frac{|u_{k}|^{2}}{\Vert
u_{k}\Vert^2_{W^{2,2}_V(\mathbb{R}^4)}}+C_{\varepsilon}\frac{|u_{k}|^{\mu+1}}{\Vert u_{k}\Vert_{W^{2,2}_V(\mathbb{R}^4)}%
^{2}}\left(  e^{\beta_{0}u_{k}^{2}}-1\right)\big)dx
\end{split}
\label{t2}%
\end{equation}

On the other hand, by \eqref{t1c1},  Adams inequality involving the degenerate potential (Theorem \ref{Ad}) and the fact
that $\mu>2$, we get for any $p>1,$
\begin{align*}
\frac{1}{\Vert u_{k}\Vert^2_{W^{2,2}_{V}(\mathbb{R}^4)}}\int_{\mathbb{R}^{4}}%
|u_{k}|^{\mu+1}e^{\beta_{0}u_{k}^{2}}dx  & \leq\frac{1}{\Vert u_{k}%
\Vert^2_{W^{2,2}_{V}(\mathbb{R}^4)}}\left(  \int_{\mathbb{R}^{4}}|u_{k}|^{(\mu+1) p}dx\right)
^{1/p}\left(  \int_{\mathbb{R}^{4}}\left(  e^{p^{\prime}\beta_{0}u_{k}^{2}%
}-1\right)  dx\right)  ^{1/p^{\prime}}\\
& \leq c\Vert u_{k}\Vert_{W^{2,2}_{V}(\mathbb{R}^4)}^{\mu-1}\rightarrow0,\text{as }k\rightarrow
\infty.
\end{align*}
which is a contradiction with \eqref{t2}.
\medskip

Next, we prove that $m_{V}<\frac{16\pi^2}{\alpha_0}$. Define \[
I_{\infty}\left(  u\right)  =\frac{1}{2}\int_{\mathbb{R}^{4}}\left(
\left\vert \Delta u\right\vert ^{2}+V_{\infty}\left\vert u\right\vert ^{2}\right)
dx-\int_{\mathbb{R}^{4}}F(u)dx
\]
and%
\[
\mathcal{N}_{\infty}=\left\{  \left.  u\in W^{2,2}\left(  \mathbb{R}^{4}\right)
\right\vert u\neq0,N_{\infty}\left(  u\right)  =0\right\}  ,
\]
where
\[
N_{\infty}\left(  u\right)  =\int_{\mathbb{R}^{4}}\left(  \left\vert \Delta
u\right\vert ^{2}+V_{\infty}\left\vert u\right\vert ^{2}\right)  dx-
\int_{\mathbb{R}^{4}}f(u)udx.
\]
Set \[
m_{\infty}=\inf\left\{  I_{\infty}\left(  u\right)  ,u\in\mathcal{N}%
_{\infty}\right\}\]
Recalling the proof of Theorem \ref{thm1}, we have proved that $m_\infty$ is achieved by some function $w\in W^{2,2}(\mathbb{R}^4)$ if $V_\infty<\gamma^{*}$ with
$m_\infty=I_{\infty}(w)<\frac{16\pi^2}{\alpha_0}$. Since $w\in \mathcal{N}_{\infty}$, then $\int_{\mathbb{R}^4}(|\Delta w|^2+V_{\infty}|w|^2)dx=\int_{\mathbb{R}^4}f(w)wdx$, which implies that $\int_{\mathbb{R}^4}(|\Delta w|^2+V(x)|w|^2)dx<\int_{\mathbb{R}^4}f(w)wdx$ from the assumption (V2). It follows that there exists $t\in (0,1)$ such that
$tw\in \mathcal{N}_{V}$ and
\begin{equation}\begin{split}
m_{V}\leq I_{V}(tw)&=\frac{1}{2}\int_{\mathbb{R}^4}\big(f(tw)tw-2F(tw)\big)dx\\
&< \frac{1}{2}\int_{\mathbb{R}^4}\big(f(w)w-2F(w)\big)dx\\&=I_{\infty}(w)=m_{\infty},
\end{split}\end{equation}
this proves that \begin{equation}\label{compare}m_{V}< m_{\infty}<\frac{16\pi^2}{\alpha_0}, \text{  if } V_\infty<\gamma^*,\end{equation}
and the proof for this lemma is finished.
\end{proof}

We now consider a minimizing sequence $\left\{  u_{k}\right\}  _{k}%
\subset\mathcal{N}_V$ for $m_V$. According to (A-R) condition (iii) and
$I_V(u_{k})\rightarrow m_V>0$, we derive  that $\left\{
u_{k}\right\}  _{k}$ is bounded in $W^{2,2}(\mathbb{R}^4)$,
then up to a subsequence, there exists $u\in W^{2,2}(\mathbb{R}^4)$ such that

\begin{itemize}
\item $u_{k}\rightarrow u$ weakly in $W^{2,2}(\mathbb{R}^4)  $
and in $L^{p}\left(  \mathbb{R}^{4}\right)  $, for any $p>1$,

\item $u_{k}\rightarrow u$ in $L_{loc}^{p}\left(  \mathbb{R}^{4}\right)  $,

\item $u_{k}\rightarrow u$, a.e.
\end{itemize}
We claim
\begin{lemma}\label{lem4.3}
$u\neq 0$.
\end{lemma}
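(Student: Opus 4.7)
\medskip
\noindent\emph{Plan.} I argue by contradiction, assuming $u_k \rightharpoonup 0$ weakly in $W^{2,2}(\mathbb{R}^4)$. The strategy is to extract a nontrivial profile of the minimizing sequence at infinity, and to use it to contradict the strict inequality $m_V < m_\infty$ already established in Lemma \ref{impor}.

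First, adapting the reasoning of Lemma \ref{lem3.3} to the degenerate setting, I would establish local $L^2$-non-vanishing: there exist $L,\delta>0$ and $y_k\in\mathbb{R}^4$ with $\int_{B_L(y_k)} u_k^2\,dx \geq \delta$. The key observation is that if $\lim_k \int_{\{|u_k|<R\}} f(u_k)u_k\,dx = 0$ for every $R>0$, then blow-up combined with (ii)--(iii) forces $\int F(u_k)\,dx\to 0$ and hence $\|u_k\|_{W_V^{2,2}}^2 \to 2m_V < 32\pi^2/\alpha_0$; the degenerate Adams inequality (Theorem \ref{Ad}) together with the growth bound \eqref{xin1} then yields $\int f(u_k)u_k\,dx\to 0$, contradicting $u_k\in\mathcal{N}_V$ with $m_V>0$. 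The positivity $\int_{\{|u_k|<R\}} f(u_k)u_k\,dx \geq c>0$ transfers to a lower bound $\int u_k^2 \geq c'>0$ via (iv), and a Lions-type localization in $\mathbb{R}^4$ produces the $y_k$. Moreover, $\{y_k\}$ must be unbounded: otherwise, along a subsequence $y_k\to y_0$, so $B_L(y_k)$ sits in a fixed ball and the Rellich compactness $u_k\to 0$ in $L^2_{loc}$ contradicts $\int_{B_L(y_k)} u_k^2 \geq \delta$. So $|y_k|\to\infty$.

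Define $v_k(x):=u_k(x+y_k)$; this sequence is bounded in $W^{2,2}(\mathbb{R}^4)$ and, up to extraction, $v_k\rightharpoonup v$ with $v\neq 0$ by Rellich (since $\int_{B_L(0)} v_k^2 \geq \delta$). Since $V(x+y_k)\to V_\infty$ pointwise and $V\leq V_\infty$, a localized Fatou argument yields $\liminf_k \int V(x+y_k)v_k^2\,dx \geq V_\infty\|v\|_2^2$; combining this with weak lower semi-continuity of $\|\Delta\,\cdot\,\|_2^2$, Fatou applied to $f(v_k)v_k\geq 0$ in the Nehari identity for $v_k$, and a Brezis-Lieb splitting for $\int F(v_k)\,dx$ (legitimate by the subcritical exponential integrability of $\{v_k\}$ inherited from Theorem \ref{Ad}), one obtains $N_\infty(v)\leq 0$. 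Hence there exists $t\in(0,1]$ with $tv\in\mathcal{N}_\infty$, and since $s\mapsto f(s)s-2F(s)$ is non-decreasing in $|s|$ (Remark \ref{adrem}),
\[
m_\infty \leq I_\infty(tv) = \tfrac{1}{2}\int \bigl(f(tv)(tv) - 2F(tv)\bigr)\,dx \leq \tfrac{1}{2}\int \bigl(f(v)v-2F(v)\bigr)\,dx \leq \liminf_{k\to\infty} I_V(u_k) = m_V,
\]
contradicting $m_V<m_\infty$. The main obstacle is precisely this last step: extracting a nontrivial profile $v$ from the weak limit and justifying the passage to $I_\infty$ for the critical-growth term $\int F(v_k)\,dx$ in the absence of strong convergence. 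This is where Theorem \ref{Ad} (giving the subcritical exponential integrability of the translated sequence) and the strict inequality $V<V_\infty$ on a set of positive measure guaranteed by (V1) (already exploited in Lemma \ref{impor}) are decisive.
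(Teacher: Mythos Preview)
Your overall strategy (extract a nontrivial profile $v$ of the translated sequence $v_k=u_k(\cdot+y_k)$ at infinity and compare with the limiting problem to get $m_\infty\le m_V$) is a natural alternative to the paper's route, but the argument has a genuine gap at the step ``one obtains $N_\infty(v)\le 0$''. The ingredients you list do not yield this inequality. From the translated Nehari identity
\[
\|\Delta v_k\|_2^2+\int V(x+y_k)\,v_k^2\,dx=\int f(v_k)v_k\,dx,
\]
weak lower semi-continuity and your Fatou argument give $\liminf_k$ of the left side $\ge \|\Delta v\|_2^2+V_\infty\|v\|_2^2$ \emph{and} $\liminf_k$ of the right side $\ge \int f(v)v\,dx$; both bounds point the same way and cannot be combined into $N_\infty(v)\le 0$. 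In fact, since $V\le V_\infty$, one has $N_\infty(v_k)\ge N_{V(\cdot+y_k)}(v_k)=0$ for every $k$, so there is no a~priori sign information on $N_\infty(v)$ in your favour. What would actually force $N_\infty(v)\le 0$ is the \emph{convergence} $\int f(v_k)v_k\,dx\to\int f(v)v\,dx$, and this requires ruling out mass of $f(v_k)v_k$ escaping to infinity for the \emph{translated} sequence --- essentially a dichotomy exclusion of the type of Lemma~\ref{lem4.4}, which you have not supplied. A Br\'ezis--Lieb splitting for $\int F(v_k)\,dx$ alone does not address $\int f(v_k)v_k\,dx$, and ``subcritical exponential integrability inherited from Theorem~\ref{Ad}'' is also not automatic: the norm $\|\cdot\|_{W^{2,2}_V}$ is not translation-invariant, and you have not shown that the translated sequence stays below the Adams threshold.

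The paper avoids all of this by \emph{not} translating. Since $u_k\to 0$ in $L^2_{loc}$ (the contradiction hypothesis) and $V_\infty-V(x)\to 0$ at infinity, one gets $\int(V_\infty-V)|u_k|^2\,dx\to 0$ directly; hence the scaling $t_k\ge 1$ that places $t_ku_k\in\mathcal N_\infty$ satisfies $t_k\to 1$ (this step uses the strict monotonicity of $f(t)/t$ and a short case analysis to exclude $t_0>1$). Then
\[
m_\infty\le I_\infty(t_ku_k)=I_V(t_ku_k)+\tfrac{t_k^2}{2}\!\int(V_\infty-V)|u_k|^2\,dx\le t_k^2\,I_V(u_k)+o(1)\to m_V,
\]
using the monotonicity of $F(t)/t^2$. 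This gives the contradiction with \eqref{compare} without any profile extraction or Br\'ezis--Lieb machinery. If you want to salvage your approach, you must either (i) prove convergence of $\int f(v_k)v_k\,dx$ for the translated sequence (a nontrivial no-dichotomy statement), or (ii) upgrade $(u_k)$ to a Palais--Smale sequence via Ekeland so that the weak limit $v$ is an actual solution of the limiting equation, which would give $N_\infty(v)=0$ for free.
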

\begin{proof}
We prove this by contradiction. If $u=0$, and $u_{k}\rightarrow0$
in $L_{loc}^{2}\left(  \mathbb{R}^{4}\right)  $. We first claim that:%
\begin{equation}
\underset{k\rightarrow+\infty}{\lim}\int_{\mathbb{R}^{4}}\left(
V_{\infty}-V\left(  x\right)  \right)  \left\vert u_{k}\right\vert ^{2}dx=0.
\label{vanish}%
\end{equation}

For any fixed $\varepsilon>0$, we take $R_{\varepsilon}>0$ such that
\[
\left\vert V_{\infty}-V\left(  x\right)  \right\vert \leq\varepsilon,\text{ for
any }\left\vert x\right\vert >R_{\varepsilon}.
\]
Combining this and the boundedness of $u_{k}$ in $W^{2,2}\left(  \mathbb{R}%
^{4}\right)  $, we derive that
\begin{align*}
\int_{\mathbb{R}^{4}}\left(V_{\infty}-V\left(  x\right)  \right)  \left\vert
u_{k}\right\vert ^{2}dx &  =\int_{B_{R_{\varepsilon}}}\left( V_{\infty}-V\left(
x\right)  \right)  \left\vert u_{k}\right\vert ^{2}dx+\int_{B_{R_{\varepsilon
}}^{c}}\left( v_{\infty}-V\left(  x\right)  \right)  \left\vert u_{k}\right\vert
^{2}dx\\
&  \leq c\int_{B_{R_{\varepsilon}}}\left\vert u_{k}\right\vert ^{2}%
dx+M\varepsilon,
\end{align*}
where $M=\underset{k}{\sup}\int_{\mathbb{R}^{4}}\left\vert u_{k}\right\vert
^{2}dx$. This together with $u_{k}\rightarrow0$ in $L_{loc}^{2}\left(
\mathbb{R}^{4}\right)  $ as $k\rightarrow\infty$ yields that
\[
\underset{k\rightarrow+\infty}{\lim}\int_{\mathbb{R}^{4}}\left(  \gamma-V\left(  x\right)  \right)  \left\vert
u_{k}\right\vert ^{2}dx\leq M\varepsilon,
\]
which implies (\ref{vanish}) holds.

Since $u_{k}\in \mathcal{N}_{V}$, we know that there exists some sequence
$t_{k}\geq1$ such that $t_{k}u_{k}\in\mathcal{N}_{\infty}$, that is,%
\begin{equation}
\int_{\mathbb{R}^{4}}\left(  \left\vert \nabla u_{k}\right\vert ^{2}%
+V_{\infty}\left\vert u_{k}\right\vert ^{2}\right)  dx-\int_{\mathbb{R}^{4}}%
\frac{f\left(  t_{k}u_{k}\right)  u_{k}}{t_{k}}dx=0.\label{sub1}%
\end{equation}
On the other hand, since $u_{k}\in \mathcal{N}_{V}$, then
\begin{equation}
\int_{\mathbb{R}^{4}}\left(  \left\vert \nabla u_{k}\right\vert ^{2}+V\left(
x\right)  \left\vert u_{k}\right\vert ^{2}\right)  dx-\int_{\mathbb{R}^{4}%
}f\left(  u_{k}\right)  u_{k}dx=0.\label{sub2}%
\end{equation}

Combining (\ref{sub1}) and (\ref{sub2}),\ we get%
\begin{align*}
&  \int_{\mathbb{R}^{4}}\frac{f\left(  t_{k}u_{k}\right)  }{t_{k}u_{k}}%
u_{k}^{2}dx-\int_{\mathbb{R}^{4}}\frac{f\left(  u_{k}\right)  }{u_{k}}%
u_{k}^{2}dx\\
&  =\int_{\mathbb{R}^{4}}\left(  V_{\infty}-V\left(  x\right)  \right)  \left\vert
u_{k}\right\vert ^{2}dx\rightarrow0.
\end{align*}
\ Hence
\begin{equation}
\int_{\mathbb{R}^{4}}\frac{f\left(  t_{k}u_{k}\right)  }{t_{k}u_{k}}u_{k}%
^{2}dx=\int_{\mathbb{R}^{4}}\frac{f\left(  u_{k}\right)  }{u_{k}}u_{k}%
^{2}dx+o_{k}\left(  1\right).  \label{converg}%
\end{equation}

Next, we claim that $t_{k}\rightarrow t_{0}=1$ as $k\rightarrow\infty$. We
prove this by contradiction. Assume that $t_{0}>1$. Since
\[
I_{V}\left(  u_{k}\right)  =\frac{1}{2}\int_{\mathbb{R}^{4}}\left(  f\left(
u_{k}\right)  u_{k}-2F\left(  u_{k}\right)  \right)  dx\rightarrow
m_{V}>0,\text{as }k\rightarrow\infty,
\] we have $\underset{k\rightarrow\infty}{\lim}\int_{\mathbb{R}%
^{4}}f\left(  u_{k}\right)  u_{k}dx>0$. Now, we claim that \begin{equation}\underset{k\rightarrow\infty} {\lim}  \int_{\mathbb{R}^{4}}%
\frac{f\left(  t_{0}u_{k}\right)  }{t_{0}u_{k}}u_{k}^{2}dx >\underset{k\rightarrow\infty}{\lim}\int_{\mathbb{R}%
^{4}}f\left(  u_{k}\right)  u_{k}dx.\end{equation}
Since $f(t)=o(t)$ as $t\rightarrow0$, and $\|u_k\|_2$ is bounded, then $$\lim\limits_{r\rightarrow 0}\lim\limits_{k\rightarrow+\infty}\int_{\{|u_k|<r\}}f\left(u_{k}\right) u_{k}dx=0.$$ Notice that $\lim\limits_{k \rightarrow \infty} \int_{\mathbb{R}^{4}} f\left(u_{k}\right) u_{k}dx>0$, hence one of the following two cases must occur:

Case 1: $\lim\limits_{R\rightarrow+\infty}\lim\limits_{k \rightarrow \infty} \int_{\{|u_k|>R\}} f\left(u_{k}\right) u_{k} \mathrm{~d}x>0;$
\vskip0.1cm

Case 2: there exist some  $r,R>0$ such that $\lim_{k\rightarrow \infty}\int_{\{r<|u_k|<R\}} f\left(u_{k}\right) u_{k} \mathrm{~d}x>0$.
\medskip

If Case 1 occurs, since $f(t)$ has critical exponential growth when $t$ is large, then we have
$$\lim\limits_{R\rightarrow+\infty}\lim_{k \rightarrow \infty} \int_{\{|u_k|>R\}}\frac{f\left(t_{0} u_{k}\right)}{t_{0} u_{k}} u_{k}^{2} \mathrm{~d} x>\lim\limits_{R\rightarrow+\infty}\lim _{k \rightarrow \infty} \int_{\{|u_k|>R\}}f\left(u_{k}\right) u_{k} \mathrm{~d}x.$$
\medskip

If Case 2 occurs, since $u_{k}$ are bounded on the set $\{x|r\leq |u_k|\leq R\}$, then $|\{x|r\leq |u_k|\leq R\}|>\beta$ for some $\beta>0.$ Hence
$$\lim_{k \rightarrow \infty} \int_{\{r\leq |u_k|\leq R\}}\frac{f\left(t_{0} u_{k}\right)}{t_{0} u_{k}} u_{k}^{2} \mathrm{~d} x>\lim _{k \rightarrow \infty} \int_{\{r\leq |u_k|\leq R\}}f\left(u_{k}\right) u_{k} \mathrm{~d}x.$$

Combining the above estimates, the claim follows from the monotonicity of $\frac{f(t)}{t}$. Therefore, we derive that

\[
\underset{k\rightarrow\infty}{\lim}\int_{\mathbb{R}^{4}}\frac{f\left(
t_{k}u_{k}\right)  }{t_{k}u_{k}}u_{k}^{2}dx\geq\underset{k\rightarrow\infty} {\lim}  \int_{\mathbb{R}^{4}}%
\frac{f\left(  t_{0}u_{k}\right)  }{t_{0}u_{k}}u_{k}^{2}dx >\underset{k\rightarrow\infty}{\lim}\int_{\mathbb{R}%
^{4}}f\left(  u_{k}\right)  u_{k}dx>0
\]
which contradicts (\ref{converg}).
\vskip 0.1cm

Now, by\ (\ref{vanish}), we can write
\begin{align*}
m_{\infty}  &  \leq\lim_{k\rightarrow+\infty}I_{\infty}\left(  t_{k}%
u_{k}\right)  =\lim_{k\rightarrow+\infty}\left(  I_{V}\left(  t_{k}%
u_{k}\right)  +\frac{1}{2}t_{k}^{2}\int_{\mathbb{R}^{4}}(V_{\infty}
-V(x))\left\vert u_{k}\right\vert ^{2}dx\right) \\
&  =\lim_{k\rightarrow+\infty}I_{V}\left(  t_{k}u_{k}\right)  \leq
\lim_{k\rightarrow+\infty}t_{k}^{2}\left(  \int_{\mathbb{R}^{4}}\left(
\left\vert \nabla u_{k}\right\vert ^{2}+V\left(  x\right)  \left\vert
u_{k}\right\vert ^{2}\right)  dx-\int_{\mathbb{R}^{4}}\frac{F\left(
t_{k}u_{k}\right)  }{t_{k}^{2}u_{k}^{2}}u_{k}^{2}dx\right).
\end{align*}
This together with the monotonicity of $\frac{F\left(
t\right)  }{t^{2}}$ (see Remark \ref{adrem}) and $\lim\limits_{k\rightarrow \infty}t_k=1$ gives
\[
m_{\infty}\leq\lim_{k\rightarrow+\infty}I_{V}\left(  u_{k}\right)  =m_{V}%
\]
which contradicts (\ref{compare}). This accomplishes the proof of Lemma \ref{lem4.3}.
\end{proof}

Next, we claim that

\begin{lemma}\label{lem4.4}  It holds that
$$\lim_{k\rightarrow+\infty}\int_{\mathbb{R}^4}f(u_k)u_kdx=\lim_{L\rightarrow +\infty}\lim_{k\rightarrow+\infty}\int_{B_L(0)}f(u_k)u_kdx,\ \
\lim_{L\rightarrow +\infty}\lim_{k\rightarrow+\infty}\int_{\mathbb{R}^4\setminus B_L(0)}f(u_k)u_kdx=0.$$
\end{lemma}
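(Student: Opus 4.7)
The plan is to mirror the strategy of Lemma \ref{lem3.4}, with two essential modifications that handle the lack of translation invariance of $V(x)$: the exterior piece will be projected onto the limiting Nehari manifold $\mathcal{N}_\infty$ rather than $\mathcal{N}_V$, and the exterior-concentration alternative will be ruled out by combining the strict inequality $m_V < m_\infty$ from \eqref{compare} with the nonvanishing of the weak limit $u\neq 0$ from Lemma \ref{lem4.3}, in place of the translation argument used in Lemma \ref{lem3.3}.

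First I would set up $M$, $M^0$, $M^\infty$ exactly as in Lemma \ref{lem3.4}, observe that $M^0+M^\infty=M$, and reduce everything to showing $M^\infty=0$. With the cutoffs $\phi_L^0,\phi_L^\infty$ of \eqref{test} and $u_{k,L}^{*}:=u_k\phi_L^{*}$, the asymptotic splitting identities of Lemma \ref{lem3.4} carry over verbatim with $V(x)$ in place of $\gamma$; the cross terms involving $\nabla\phi_L^{*}$ and $\Delta\phi_L^{*}$ vanish in the iterated limit because $\{u_k\}$ is bounded in $W^{2,2}(\mathbb{R}^4)$ and these cutoff factors are supported on an annulus of bounded measure. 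Moreover, assumption $(V2)$ yields the new identity
$$\lim_{L\to\infty}\lim_{k\to\infty}\int_{\mathbb{R}^4}\bigl(V(x)-V_\infty\bigr)|u_{k,L}^\infty|^2\,dx=0,$$
so the asymptotic quadratic form on the exterior piece is exactly that of $\mathcal{N}_\infty$.

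Next I would split the Nehari identity for $u_k\in\mathcal{N}_V$ as in Lemma \ref{lem3.4}. After this splitting, one of the two pieces must be sub-Nehari; by symmetry assume it is the exterior one, i.e.
$$\lim_L\lim_k\int_{\mathbb{R}^4}(|\Delta u_{k,L}^\infty|^2+V_\infty|u_{k,L}^\infty|^2)\,dx\leq \lim_L\lim_k\int_{\mathbb{R}^4\setminus B_L}f(u_k)u_k\,dx.$$
Assuming for contradiction $M^\infty>0$, Lemma \ref{6.1} produces $t_{k,L}^\infty\in(0,1]$ with $t_{k,L}^\infty u_{k,L}^\infty\in\mathcal{N}_\infty$; the monotonicity of $t\mapsto f(t)t-2F(t)$ (Remark \ref{adrem}) then yields
$$m_\infty\leq I_\infty(t_{k,L}^\infty u_{k,L}^\infty)\leq \tfrac12\!\int_{\mathbb{R}^4\setminus B_L}(f(u_k)u_k-2F(u_k))\,dx+o_{k,L}(1),$$
and the parallel projection of $u_{k,L}^0$ into $\mathcal{N}_V$ gives the analogous bound with $m_V$ and $\int_{B_L}$. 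Summing and using the Nehari identity $I_V(u_k)=\tfrac12\int(f(u_k)u_k-2F(u_k))\,dx\to m_V$ produces $m_V+m_\infty\leq m_V+o(1)$, forcing $m_\infty\leq 0$ and contradicting Lemma \ref{impor}. Hence $M^\infty=0$.

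The symmetric case in which $u_{k,L}^0$ is the sub-Nehari piece would analogously force $M^0=0$; this branch is excluded using Lemma \ref{lem4.3}: since $u\neq 0$ and $u_k\to u$ strongly in $L^p_{\mathrm{loc}}$ for every $p<\infty$, and since $f(u_k)u_k\geq 0$ by (A-R), Fatou's lemma yields $M^0\geq\int_{\mathbb{R}^4} f(u)u\,dx>0$. The main obstacle will be rigorously justifying the Nehari case split and the uniform-in-$(k,L)$ bound $t_{k,L}^\infty\in(0,1]$ needed to invoke the monotonicity of $f(t)t-2F(t)$; this is the heart of the argument in Lemma \ref{lem3.4}, and in the present setting it requires an additional approximation step, via $(V2)$, to replace $V(x)|u_{k,L}^\infty|^2$ by $V_\infty|u_{k,L}^\infty|^2$ before the projection to $\mathcal{N}_\infty$ can be carried out.
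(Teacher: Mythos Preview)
Your idea of projecting the exterior piece onto $\mathcal{N}_\infty$ and invoking $m_V<m_\infty$ is sound, but the execution has a real gap. In the case where the exterior piece is sub-Nehari, the interior piece is then (in the iterated limit) \emph{super}-Nehari, so the ``parallel projection of $u_{k,L}^0$ into $\mathcal{N}_V$'' produces a scaling factor $t_{k,L}^0$ with $\lim_L\lim_k t_{k,L}^0\geq 1$, and the monotonicity of $t\mapsto f(t)t-2F(t)$ now points the wrong way: you get $I_V(t_{k,L}^0 u_{k,L}^0)\geq \tfrac12\int_{B_L}(f(u_k)u_k-2F(u_k))\,dx$, not $\leq$. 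The claimed bound $m_V\leq\tfrac12\int_{B_L}(\cdots)$ therefore does not follow, and the summation $m_V+m_\infty\leq m_V$ collapses. Your ``symmetric case'' is also inverted: when the \emph{interior} piece is sub-Nehari, the single projection onto $\mathcal{N}_V$ (exactly as in Lemma~\ref{lem3.4}) already yields $M^\infty=0$, not $M^0=0$, so there is nothing to exclude via Lemma~\ref{lem4.3} in that branch.

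The fix is simply to drop the second projection. In the exterior-sub-Nehari case with $M^\infty>0$, your projection onto $\mathcal{N}_\infty$ alone gives, in the iterated limit,
\[
m_\infty\leq \tfrac12\int_{\mathbb{R}^4\setminus B_L}\bigl(f(u_k)u_k-2F(u_k)\bigr)\,dx\leq \tfrac12\int_{\mathbb{R}^4}\bigl(f(u_k)u_k-2F(u_k)\bigr)\,dx= I_V(u_k)\to m_V,
\]
contradicting \eqref{compare}. Combined with the interior-sub-Nehari case (which gives $M^\infty=0$ directly), this finishes the proof. This corrected route is a genuine alternative to the paper's argument: the paper never introduces $\mathcal{N}_\infty$ here but instead projects the sub-Nehari piece onto $\mathcal{N}_V$ in \emph{both} cases, obtaining the dichotomy $(M^0,M^\infty)\in\{(M,0),(0,M)\}$ and ruling out $(0,M)$ via $u\neq 0$ from Lemma~\ref{lem4.3}. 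Your (repaired) version trades the appeal to Lemma~\ref{lem4.3} for the strict gap $m_V<m_\infty$; either works.
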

\begin{proof}
Similar to the proof of Lemma \ref{lem3.4}, we define $M=\lim\limits_{k\rightarrow+\infty}\int_{\mathbb{R}^4}f(u_k)u_kdx$,$$ M^0=\lim\limits_{L\rightarrow +\infty}\lim\limits_{k\rightarrow+\infty}\int_{B_L(0)}f(u_k)u_kdx,\ \    M^{\infty}=\lim\limits_{L\rightarrow +\infty}\lim\limits_{k\rightarrow+\infty}\int_{\mathbb{R}^4\setminus B_L(0)}f(u_k)u_kdx.$$ We first show that
$(M^{0},M^{\infty})=(M,0)$ or $(M^{0},M^{\infty})=(0,M)$. Since $u_k\in \mathcal{N}_V$, then
\begin{equation}\begin{split}
&\int_{\mathbb{R}^4}(|\Delta u_k|^2+V(x)|u_k|^2)dx\\
&\ \ =\lim\limits_{L\rightarrow +\infty}\lim\limits_{k\rightarrow+\infty}\int_{B_L(0)}f(u_k)u_kdx+\lim\limits_{L\rightarrow +\infty}\lim\limits_{k\rightarrow+\infty}\int_{\mathbb{R}^4\setminus B_L(0)}f(u_k)u_kdx\\
&\ \ =M^{0}+M^{\infty}.
\end{split}\end{equation}
Noticing that we can write $\int_{\mathbb{R}^4}(|\Delta u_k|^2+V(x)|u_k|^2)dx$ as
\begin{align*}
  \int_{\mathbb{R}^4}(|\Delta u_k|^2+V(x)|u_k|^2)dx&=\lim\limits_{L\rightarrow +\infty}\lim\limits_{k\rightarrow+\infty}\int_{B_L(0)}(|\Delta u_k|^2+V(x)|u_k|^2)dx+\\&+\lim\limits_{L\rightarrow +\infty}\lim\limits_{k\rightarrow+\infty}\int_{\mathbb{R}^4\setminus B_L(0)}(|\Delta u_k|^2+V(x)|u_k|^2)dx.
\end{align*}

Hence we can assume that $$\lim\limits_{L\rightarrow +\infty}\lim\limits_{k\rightarrow+\infty}\int_{B_L(0)}(|\Delta u_k|^2+V(x)|u_k|^2)dx\leq \lim\limits_{L\rightarrow +\infty}\lim\limits_{k\rightarrow+\infty}\int_{B_L(0)}f(u_k)u_kdx$$ or $$\lim\limits_{L\rightarrow +\infty}\lim\limits_{k\rightarrow+\infty}\int_{\mathbb{R}^4\setminus B_L(0)}(|\Delta u_k|^2+V(x)|u_k|^2)dx\leq \lim\limits_{L\rightarrow +\infty}\lim\limits_{k\rightarrow+\infty}\int_{\mathbb{R}^4\setminus B_L(0)}f(u_k)u_kdx.$$
\vskip0.1cm

For a sufficiently large number $L>0$, define the  function $u_{k,L}^{*}=u_k\phi_{L}^{*}$ ($*=0$ or $\infty$) as in Lemma \ref{lem3.4}. We can easily verify that
$$\lim\limits_{L\rightarrow +\infty}\lim\limits_{k\rightarrow+\infty}\int_{\mathbb{R}^4}(|\Delta u_{k,L}^{0}|^2+V(x)|u_{k,L}^{0}|^2)dx=\lim\limits_{L\rightarrow +\infty}\lim\limits_{k\rightarrow+\infty}\int_{B_L(0)}(|\Delta u_k|^2+V(x)|u_k|^2)dx$$
$$\lim\limits_{L\rightarrow +\infty}\lim\limits_{k\rightarrow+\infty}\int_{\mathbb{R}^4}(|\Delta u_{k,L}^{\infty}|^2+V(x)|u_{k,L}^{\infty}|^2)dx=\lim\limits_{L\rightarrow +\infty}\lim\limits_{k\rightarrow+\infty}\int_{\mathbb{R}^4\setminus B_L(0)}(|\Delta u_k|^2+V(x)|u_k|^2)dx$$
$$\lim\limits_{L\rightarrow +\infty}\lim\limits_{k\rightarrow+\infty}\int_{\mathbb{R}^4}f(u_{k,L}^{0})u_{k,L}^0dx=\lim\limits_{L\rightarrow +\infty}\lim\limits_{k\rightarrow+\infty}\int_{B_L(0)}f(u_k)u_kdx$$
$$\lim\limits_{L\rightarrow +\infty}\lim\limits_{k\rightarrow+\infty}\int_{\mathbb{R}^4}f(u_{k,L}^{\infty})u_{k,L}^{\infty}dx=\lim\limits_{L\rightarrow +\infty}\lim\limits_{k\rightarrow+\infty}\int_{\mathbb{R}^4\setminus B_L(0)}f(u_k)u_kdx.$$
$$\lim\limits_{L\rightarrow +\infty}\lim\limits_{k\rightarrow+\infty}\int_{\mathbb{R}^4}F(u_{k,L}^{0})dx=\lim\limits_{L\rightarrow +\infty}\lim\limits_{k\rightarrow+\infty}\int_{B_L(0)}F(u_k)dx$$
$$\lim\limits_{L\rightarrow +\infty}\lim\limits_{k\rightarrow+\infty}\int_{\mathbb{R}^4}F(u_{k,L}^{\infty})dx=\lim\limits_{L\rightarrow +\infty}\lim\limits_{k\rightarrow+\infty}\int_{\mathbb{R}^4\setminus B_L(0)}F(u_k)dx.$$
Without loss of generality, we can assume that $$\lim\limits_{L\rightarrow +\infty}\lim\limits_{k\rightarrow+\infty}\int_{\mathbb{R}^4}(|\Delta u_{k,L}^0|^2+V(x)|u_{k,L}^0|^2)dx\leq \lim\limits_{L\rightarrow +\infty}\lim\limits_{k\rightarrow+\infty}\int_{\mathbb{R}^4}f(u_{k,L}^0)u_{k,L}^0dx,$$ then there exists $t_{k,L}^{0}$ such that $t_{k,L}^{0}u_{k,L}^0\in \mathcal{N}_V$. Obviously, $\lim\limits_{L\rightarrow +\infty}\lim\limits_{k\rightarrow+\infty}t_{k,L}^{0}\leq 1$. If
$t_{k,L}^{0}\leq 1$, then
\begin{equation}\label{eq3,1}\begin{split}
I_V(t_{k,L}^{0}u_{k,L}^0)&=\frac{1}{2}\int_{\mathbb{R}^4}\big(f(t_{k,L}^{0}u_{k,L}^0)t_{k,L}^{0}u_{k,L}^0-2F(t_{k,L}^{0}u_{k,L}^0)\big)dx\\
&\leq \frac{1}{2}\int_{\mathbb{R}^4}\big(f(u_{k,L}^0)u_{k,L}^0-2F(u_{k,L}^0)\big)dx.
\end{split}\end{equation}
If $t_{k,L}^{0}\geq 1$, then
\begin{equation}\label{eq3,2}\begin{split}
I_V(t_{k,L}^{0}u_{k,L}^0)&=\frac{1}{2}(t_{k,L}^{0})^2\int_{\mathbb{R}^4}(|\Delta u_{k,L}^{0}|^2+V(x)|u_{k,L}^{0}|^2)dx-\int_{\mathbb{R}^4}F(t_{k,L}^{0}u_{k,L}^0)dx\\
&\leq \frac{1}{2}(t_{k,L}^{0})^2\int_{\mathbb{R}^4}(|\Delta u_{k,L}^{0}|^2+V(x)|u_{k,L}^{0}|^2)dx-\int_{\mathbb{R}^4}F(u_{k,L}^0)dx\\
\end{split}\end{equation}
Combining the above estimate, we derive that
\begin{equation}\begin{split}
\lim\limits_{L\rightarrow +\infty}\lim\limits_{k\rightarrow+\infty}I(t_{k,L}^{0}u_{k,L}^0)&\leq \frac{1}{2}\lim\limits_{L\rightarrow +\infty}\lim\limits_{k\rightarrow+\infty}\int_{\mathbb{R}^4}\big(f(u_{k,L}^0)u_{k,L}^0-2F(u_{k,L}^0)\big)dx\\
&\ \ +\frac{1}{2}\lim\limits_{L\rightarrow +\infty}\lim\limits_{k\rightarrow+\infty}\int_{\mathbb{R}^4}\big(f(u_{k,L}^{\infty})u_{k,L}^{\infty}-2F(u_{k,L}^{\infty})\big)dx\\
&=\lim\limits_{k\rightarrow +\infty}I(u_k)=m_V.
\end{split}\end{equation}
On the other hand, from the definition of $m_V$, we know that $m_V\leq \lim\limits_{L\rightarrow +\infty}\lim\limits_{k\rightarrow+\infty}I(t_{k,L}^{0}u_{k,L}^0)$. Combining the above estimate, we conclude that $$\lim\limits_{L\rightarrow +\infty}\lim\limits_{k\rightarrow+\infty}\int_{\mathbb{R}^4}\big(f(u_{k,L}^{\infty})u_{k,L}^{\infty}-2F(u_{k,L}^{\infty})\big)dx=0,$$
that is $$\lim\limits_{L\rightarrow +\infty}\lim\limits_{k\rightarrow+\infty}\int_{\mathbb{R}^4\setminus B_{L}(0)}\big(f(u_{k})u_{k}-2F(u_{k})\big)dx=0,$$
which together with (A-R) condition implies that $M^{\infty}=\lim\limits_{L\rightarrow +\infty}\lim\limits_{k\rightarrow+\infty}\int_{\mathbb{R}^4\setminus B_L(0)}f(u_k)u_kdx=0$ and $(M^{0},M^{\infty})=(M,0)$. Similarly, we can prove that $(M^{0},M^{\infty})=(0,M)$ if we assume that
$$\lim\limits_{L\rightarrow +\infty}\lim\limits_{k\rightarrow+\infty}\int_{\mathbb{R}^4}(|\Delta u_{k,L}^{\infty}|^2+V(x)|u_{k,L}^{\infty}|^2)dx\leq \lim\limits_{L\rightarrow +\infty}\lim\limits_{k\rightarrow+\infty}\int_{\mathbb{R}^4}f(u_{k,L}^{\infty})u_{k,L}^{\infty}dx.$$
$(M^{0},M^{\infty})=(0,M)$ being impossible is a direct result of $u\neq 0$. This accomplishes the proof of Lemma \ref{lem4.4}.
\end{proof}

\begin{lemma}\label{lem3.5}
There holds $\lim\limits_{k\rightarrow +\infty}\int_{\mathbb{R}^4}F(u_k)dx=\int_{\mathbb{R}^4}F(u)dx$.
\end{lemma}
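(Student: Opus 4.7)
The plan is to mimic the proof of Lemma \ref{lem3.5be} from the constant potential case, since the structural ingredients used there (a decay-at-infinity result for $\int f(u_k)u_k$, the (A-R) condition, and growth hypothesis (iii)) are all available in the degenerate setting: Lemma \ref{lem4.4} plays the role of Lemma \ref{lem3.4}, and conditions (ii)--(iii) are assumed throughout.

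First, I would split the integral at a large radius $L$, writing
\[
\int_{\mathbb{R}^4} F(u_k)\,dx = \int_{B_L(0)} F(u_k)\,dx + \int_{\mathbb{R}^4\setminus B_L(0)} F(u_k)\,dx.
\]
For the exterior piece, the (A-R) condition (ii) gives $0<\mu F(u_k)\le f(u_k)u_k$, hence Lemma \ref{lem4.4} implies
\[
\lim_{L\to+\infty}\lim_{k\to+\infty}\int_{\mathbb{R}^4\setminus B_L(0)} F(u_k)\,dx = 0.
\]
Similarly, since $u\in W^{2,2}(\mathbb{R}^4)$ and $F(u)\in L^1(\mathbb{R}^4)$ (by Fatou and the uniform bound on $\int F(u_k)\,dx$), the tail $\int_{\mathbb{R}^4\setminus B_L(0)} F(u)\,dx\to 0$ as $L\to\infty$. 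So it suffices to prove
\[
\lim_{L\to+\infty}\lim_{k\to+\infty}\int_{B_L(0)} F(u_k)\,dx = \int_{\mathbb{R}^4} F(u)\,dx.
\]

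Next, inside $B_L(0)$, I would split according to the size of $u_k$: for $s>0$, write
\[
\Bigl|\int_{B_L(0)} F(u_k)\,dx - \int_{B_L(0)} F(u)\,dx\Bigr| \le I_{k,L,s} + II_{k,L,s},
\]
where $I_{k,L,s}$ is the contribution on $\{|u_k|<s\}$ and $II_{k,L,s}$ the contribution on $\{|u_k|\ge s\}$. On the set $\{|u_k|<s\}\cap B_L(0)$, the local compact embedding $W^{2,2}\hookrightarrow L^p_{\rm loc}$ together with the growth bound (\ref{xin1}) (where the exponential factor is controlled by $e^{\beta_0 s^2}$ uniformly in $k$) provides an integrable dominating function, so the dominated convergence theorem yields $I_{k,L,s}\to 0$ as $k\to\infty$ for each fixed $L,s$.

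For $II_{k,L,s}$, I would exploit hypothesis (iii): for $s\ge t_0$ and $|u_k|\ge s$,
\[
F(u_k)\le M_0|f(u_k)| \le \frac{M_0}{s} f(u_k)u_k,
\]
(using that $f(u_k)u_k\ge 0$ by (A-R)), so
\[
\int_{B_L(0)\cap\{|u_k|\ge s\}} F(u_k)\,dx \le \frac{M_0}{s}\int_{\mathbb{R}^4} f(u_k)u_k\,dx \le \frac{C}{s},
\]
since $\int f(u_k)u_k\,dx=\|u_k\|^2_{W^{2,2}_V}$ is bounded along the minimizing sequence. The analogous bound for $F(u)$ holds by Fatou. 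Letting $k\to\infty$, then $L\to\infty$, then $s\to\infty$ finishes the argument. The main technical point — rather than an obstacle — is ensuring the exchange of the three limits is legitimate; this is handled by taking $s\to\infty$ last, after both the dominated-convergence limit in $k$ and the tail estimate in $L$ are in place.
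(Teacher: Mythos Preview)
Your proposal is correct and follows essentially the same approach as the paper's own proof: both use Lemma \ref{lem4.4} together with the (A-R) condition to kill the exterior tail, then split the interior integral according to $\{|u_k|<s\}$ versus $\{|u_k|\ge s\}$, handling the former by dominated convergence and the latter via condition (iii) and the uniform bound on $\int f(u_k)u_k\,dx$. If anything, your write-up is slightly more careful about the order of limits and the integrability of $F(u)$ than the paper's version.
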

\begin{proof}
It follows from Lemma \ref{lem3.4} that $\lim\limits_{L\rightarrow +\infty}\lim\limits_{k\rightarrow +\infty}\int_{\mathbb{R}^{4}\setminus B_{L}(0)}f(u_k)u_kdx=0$, which together with the (A-R) condition implies that $\lim\limits_{L\rightarrow +\infty}\lim\limits_{k\rightarrow +\infty}\int_{\mathbb{R}^{4}\setminus B_{L}(0)}F(u_k)dx=0$. In order to derive the desired convergence, we only need to prove that
$$\lim\limits_{L\rightarrow +\infty}\lim\limits_{k\rightarrow +\infty}\int_{B_{L}(0)}F(u_k)dx=\int_{\mathbb{R}^4}F(u)dx.$$
Indeed, for any $s>0$, we have
\begin{equation}\begin{split}
& |\int_{B_{L}(0)}F\left(u_k\right)  dx-\int_{B_{L}(0)}F\left(  u\right)  dx| \\
&  \leq |\int_{B_{L}(0)\cap \{|u_k|<s\}}
F\left(  u\right) dx-\int_{B_{L}(0)\cap \{|u_k|<s\}}F\left(  u\right)  dx| \\
&  \ \ \ \ +\left\vert \int_{B_{L}(0)\cap \{|u_k|\geq s\}}F\left(u_{k}\right)dx-\int_{B_{L}(0)\cap \{|u_k|\geq s\}}F\left(  u\right)  dx\right\vert \\
&  =I_{k,R,s}+II_{k,R,s}.
\end{split}\end{equation}
A direct application of the dominated convergence theorem leads to
$I_{k,R,s}\rightarrow0$. For $II_{k,R,s}$, from the condition (iii), we have
$F\left(  s\right)  \leq c f\left(  s\right)$. Then it follows that
\begin{align*}
\int_{B_{L}^{0}\cap\left\{  {|u_{k}|}\geq s\right\}  }F\left(
u_{k}\right)  dx  &  \leq\frac{c}{s}\int_{\mathbb{R}^{4}\cap\left\{
{|u_{k}|}\geq s\right\}  }f\left(  u_{k}\right)  {u_{k}%
}dx\\
&  =\frac{c}{s}\int_{\mathbb{R}^{4}}f\left(  u_{k}\right)  {u_{k}%
}dx\rightarrow0,\text{ as }s\rightarrow\infty,
\end{align*}
where we have used the fact that $\int_{\mathbb{R}^{4}}f\left(  u_{k}\right)
{u_{k}}dx$ is bounded. Consequently, $II_{k,R,s}\rightarrow0$, and the lemma
is finished.
\vskip0.1cm
\end{proof}

\begin{lemma}\label{lem4.6}
Let $u_k$ be a bounded sequence in $W^{2,2}(\mathbb{R}^4)$ converging weakly and for almost every $x\in \mathbb{R}^4$ to non-zero $u$. Furthermore, we also assume that $\lim\limits_{k\rightarrow +\infty}I_{V}(u_k)<\frac{16\pi^2}{\alpha_0}$ and
$\int_{\mathbb{R}^4}\big(|\Delta u|^2+V(x)|u|^2\big)dx>\int_{\mathbb{R}^4}f(u)udx$, then
$$\lim_{k\rightarrow+\infty}\int_{\mathbb{R}^4}f(u_k)u_kdx=\int_{\mathbb{R}^4}f(u)udx.$$
\end{lemma}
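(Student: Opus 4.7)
The plan is to mirror the proof of Lemma \ref{lem3.6} almost verbatim, with the constant potential $\gamma$ replaced by $V(x)$ and the Adams inequality \eqref{Adams entire space} replaced by the degenerate Adams inequality of Theorem \ref{Ad}. By Lemma \ref{lem4.4}, the tail contribution $\int_{\mathbb{R}^4\setminus B_L(0)} f(u_k) u_k\,dx$ vanishes as $L,k\to\infty$, so it is enough to prove the local convergence
$$\lim_{L\to\infty}\lim_{k\to\infty}\int_{B_L(0)} f(u_k) u_k\,dx=\int_{\mathbb{R}^4} f(u) u\,dx.$$
Weak lower semicontinuity gives $\liminf_k \|u_k\|_{W_V^{2,2}}^2 \geq \|u\|_{W_V^{2,2}}^2$, and I would split into an equality case and a strict-inequality case exactly as in Lemma \ref{lem3.6}.

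In the equality case, the norm $\|\cdot\|_{W_V^{2,2}}$ is a Hilbert norm, and by Lemma \ref{imbedding} it is equivalent to the standard $W^{2,2}$-norm; weak plus norm convergence therefore upgrades to strong convergence in $W^{2,2}(\mathbb{R}^4)$ and in $L^p$ for every $p\geq 2$. Plugging the growth bound \eqref{xin1} into Hölder's inequality and invoking Theorem \ref{Ad}, I would obtain $\sup_k\int(f(u_k)u_k)^{p_0}\,dx<\infty$ for some $p_0>1$, and Vitali's convergence theorem closes the case. In the strict-inequality case, write $A:=\lim_k\|u_k\|_{W_V^{2,2}}^2>\|u\|_{W_V^{2,2}}^2$, set $v_k=u_k/\sqrt{A}$, $v_0=u/\sqrt{A}$, and establish
$$\lim_{k\to\infty} A\bigl(1-\|v_0\|_{W_V^{2,2}}^2\bigr)=\lim_{k\to\infty}\|u_k\|_{W_V^{2,2}}^2-\|u\|_{W_V^{2,2}}^2=2\lim_{k\to\infty} I_V(u_k)-2I_V(u)<\frac{32\pi^2}{\alpha_0},$$
where the middle equality uses Lemma \ref{lem3.5} ($\lim_k\int F(u_k)=\int F(u)$) together with the definition of $I_V$, and the final inequality uses the hypothesis $\lim_k I_V(u_k)<16\pi^2/\alpha_0$ and the positivity $I_V(u)>0$, which follows from the standing assumption $\|u\|_{W_V^{2,2}}^2>\int f(u)u\,dx$ and the Ambrosetti–Rabinowitz condition $2F(t)\leq f(t)t$. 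This yields the existence of some $q_0>1$ with $q_0 A<32\pi^2/(1-\|v_0\|_{W_V^{2,2}}^2)$.

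Exactly as in Lemma \ref{lem3.6}, this threshold is the input required to run Theorem \ref{Ad} after the Young-type splitting $u_k^2\leq (1+\varepsilon)(u_k-u)^2+(1+1/\varepsilon) u^2$: the first factor is controlled via Theorem \ref{Ad} applied to the rescaled $u_k-u$ (whose limiting $W_V^{2,2}$-norm squared is $A-\|u\|_{W_V^{2,2}}^2$, which sits below $32\pi^2/\alpha_0$ by the display above), while the second factor is a fixed $L^\infty$-like exponential factor in $u$, integrable against any power of $u_k$ by Hölder. Combining these with Theorem \ref{Ad} and \eqref{xin1} produces $\sup_k \int(f(u_k)u_k)^{p_0}\,dx<\infty$ for some $p_0>1$, after which Vitali's convergence theorem finishes the argument. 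The main obstacle is squeezing the sharp threshold estimate in the strict-inequality case: the Adams bound $32\pi^2$ is saturated, so no slack is available and the positivity $I_V(u)>0$ (from the hypothesis together with (A-R)) and Lemma \ref{lem3.5} must be used in a tight way; once the threshold is secured, the remaining splitting and interpolation steps are routine and transfer verbatim from the constant-potential proof.
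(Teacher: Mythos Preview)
Your proposal is correct and tracks the paper's proof closely: the reduction to local convergence via Lemma~\ref{lem4.4}, the two-case split according to whether $\lim_k\|u_k\|_{W_V^{2,2}}^2$ equals $\|u\|_{W_V^{2,2}}^2$, and in the strict-inequality case the threshold computation $\lim_k\|u_k\|_{W_V^{2,2}}^2-\|u\|_{W_V^{2,2}}^2=2\lim_k I_V(u_k)-2I_V(u)<32\pi^2/\alpha_0$ using Lemma~\ref{lem3.5} and the positivity of $I_V(u)$ are all exactly as in the paper. The one substantive difference is how uniform integrability of $f(u_k)u_k$ is extracted from this threshold: the paper invokes the concentration-compactness principle for the Adams inequality from \cite{chenluzhang} as a black box, whereas you sketch that principle by hand via the splitting $u_k^2\le(1+\varepsilon)(u_k-u)^2+(1+1/\varepsilon)u^2$ and then apply Theorem~\ref{Ad} to the normalized $u_k-u$ (whose $W_V^{2,2}$-norm squared tends to $A-\|u\|_{W_V^{2,2}}^2$ by the Hilbert-space identity). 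Your route is more self-contained and stays inside the paper's own toolkit; the paper's route is shorter but imports an external result. Two minor points of language: Lemma~\ref{lem3.6} does not itself carry out the Young-type splitting (it too defers to an Adams/concentration-compactness statement), so ``exactly as in Lemma~\ref{lem3.6}'' slightly overstates the parallel; and the factor $e^{Cu^2}$ is not $L^\infty$ but rather lies in every $L^p$ for the fixed $u\in W^{2,2}(\mathbb{R}^4)$, which is what the H\"older step actually requires.
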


\begin{proof}
According to Lemma \ref{lem4.4}, we only need to prove that $$\lim_{L\rightarrow +\infty}\lim_{k\rightarrow+\infty}\int_{B_L(0)}f(u_k)u_kdx=
\int_{\mathbb{R}^4}f(u)udx.$$
It follows the lower semicontinuity of the norm in $W^{2,2}(\mathbb{R}^4)$ that
that $$\lim\limits_{k\rightarrow\infty}\int_{\mathbb{R}^4}\big(|\Delta u_k|^2+V(x)|u_k|^2\big)dx\geq \int_{\mathbb{R}^4}\big(|\Delta u|^2+V(x)|u|^2\big)dx.$$
We divide the proof into the following case.
\vskip0.1cm

Case 1: $\int_{\mathbb{R}^4}\big(|\Delta u_k|^2+V(x)|u_k|^2\big)dx=\int_{\mathbb{R}^4}\big(|\Delta u|^2+V(x)|u|^2\big)dx$, then according to convexity of the norm and the equivalence of norms, we see that
$u_k\rightarrow u$ in $W^{2,2}(\mathbb{R}^4)$, hence $u_k\rightarrow u$ in $L^{p}(\mathbb{R}^4)$ for any $p\geq 2$. Hence
it follows from Adams inequality in $W^{2,2}(\mathbb{R}^4)$ that for any $p_0>1$, $\sup_{k}\int_{\mathbb{R}^4} \big(f(u_k)u_k\big)^{p_0}dx<\infty$,
which implies that
\begin{equation}\label{con1}
\lim_{L\rightarrow+\infty}\lim_{k\rightarrow\infty}\int_{B_{L}}f(u_k)u_kdx=\int_{\mathbb{R}^4}f(u)udx.
\end{equation}

Case 2: If $\lim\limits_{k\rightarrow\infty}\int_{\mathbb{R}^4}\big(|\Delta u_k|^2+V(x)|u_k|^2\big)dx>\int_{\mathbb{R}^4}\big(|\Delta u|^2+V(x)|u|^2\big)dx$, we set
 $$ v_k:=\frac{u_k}{\lim\limits_{k\rightarrow\infty} \|u_k\|_{W_V^{2,2}(\mathbb{R}^4)}}\ \mbox{and}\ v_0:=\frac{u}{\lim\limits_{k\rightarrow\infty}\|u_k\|_{{W}_V^{2,2}(\mathbb{R}^4)}}.$$

 We claim there exists $q_0>1$ sufficiently $1$ such that
 \begin{equation}\label{d.7}
  q_0\|u_k\|^2_{W_V^{2,2}(\mathbb{R}^4)}<\frac{32\pi^2}{1-\|v_0\|^2_{{W}_V^{2,2}(\mathbb{R}^4)}}.
  \end{equation}
 Indeed, we can apply the condition (i) and (ii) to obtain
  \begin{equation}\begin{split}\label{d.8}
  &\lim\limits_{k\rightarrow\infty}\|u_k\|^2_{W_V^{2,2}(\mathbb{R}^4)}\big(1-\|v_0\|^2_{{W}_V^{2,2}(\mathbb{R}^4)}\big)\\
  &\ \ =\lim\limits_{k\rightarrow\infty}\|u_k\|^2_{W_V^{2,2}(\mathbb{R}^4)}\Big(1-\frac{\|u\|^2_{W_V^{2,2}(\mathbb{R}^4)}}{\|u_k\|^2_{W_V^{2,2}(\mathbb{R}^4)}}\Big)\\
 &\ \ =2\lim\limits_{k\rightarrow+\infty}I_{V}(u_k)+2\int_{\mathbb{R}^4}F(u_k)dx-2I_V(u)-2\int_{\mathbb{R}^4}F(u)dx\\
&\ \ <\frac{32\pi^2}{\alpha_0}.
\end{split}\end{equation}
Combining the above estimate with the concentration compactness principle for the Adams inequality which was established in \cite{chenluzhang}
in $W^{2,2}(\mathbb{R}^4)$, one can derive that there exists $p_0>1$ such that
\begin{eqnarray}\label{d.9}
\sup_{k}\int_{\mathbb{R}^4}\big(f(u_k)u_k\big)^{p_0}dx<\infty.
\end{eqnarray}
Then it follows from the Vitali convergence theorem that $$\lim_{L\rightarrow+\infty}\lim_{k\rightarrow\infty}\int_{B_{L}}f(u_k)u_kdx=\int_{\mathbb{R}^4}f(u)udx,$$
which implies the proof of Lemma \ref{lem4.6}.
\end{proof}

Now we are in position to show the existence of ground-state solutions to equation (\ref{degen}) is $V_\infty<\gamma^{*}$.

\begin{proof} [Proof of Theorem \ref{thm3}]

We will prove that if $V_\infty<\gamma^{*}$, then $m_{V}$ is achieved by some $u$. We claim that $$\int_{\mathbb{R}^4}\big(|\Delta u|^2+V(x)|u|^2\big)dx\leq \int_{\mathbb{R}^4}f(u)udx.$$
Suppose this is false, then
\begin{equation}\label{t1}
\int_{\mathbb{R}^4}\big(|\Delta u|^2+V(x)|u|^2\big)dx>\int_{\mathbb{R}^4}f(u)udx
\end{equation}
In view of Lemma \ref{impor} and Lemma \ref{lem4.6}, we derive that  $$\lim_{k\rightarrow \infty}\int_{\mathbb{R}^4}f(u_k)u_kdx=\int_{\mathbb{R}^4}f(u)udx.$$
This implies that
\begin{equation}\begin{split}
\int_{\mathbb{R}^4}\big(|\Delta u|^2+V(x)|u|^2\big)dx&\leq \lim_{k\rightarrow \infty}\int_{\mathbb{R}^4}\big(|\Delta u_k|^2+V(x)|u_k|^2\big)dx\\
&=\lim_{k\rightarrow \infty}\int_{\mathbb{R}^4}f(u_k)u_kdx=\int_{\mathbb{R}^4}f(u)udx \\ &<\int_{\mathbb{R}^4}\big(|\Delta u|^2+V(x)|u|^2\big)dx,
\end{split}\end{equation}
which is a contradiction. This proves the claim.
\vskip 0.1cm
Since $$\int_{\mathbb{R}^4}\big(|\Delta u|^2+V(x)|u|^2\big)dx\leq \int_{\mathbb{R}^4}f(u)udx,$$
there exists $\gamma_{0}\in (0,1]$ such that $\gamma_{0}u\in \mathcal{N}_{V}$. According to the definition of  $m_{V}$, we derive that
\begin{equation}\begin{split}
m_V\leq I_V(\gamma_{0}u)&=\frac{1}{2}\int_{\mathbb{R}^4}\big(f(\gamma_{0}u)(\gamma_{0}u)-2F(\gamma_{0}u)\big)dx\\
&\leq \frac{1}{2}\int_{\mathbb{R}^4}\big(f(u)(u)-2F(u)\big)dx\\
&\leq \lim_{k\rightarrow \infty}\frac{1}{2}\int_{\mathbb{R}^4}\big(f(u_k)(u_k)-2F(u_k)\big)dx\\
&=\lim_{k\rightarrow \infty}I_{V}(u_k)=m_V.
\end{split}\end{equation}
This implies that $\gamma_0=1$ and $u\in \mathcal{N}_{V}$ and $I_{V}(u)=m_V$. We accomplish the proof of Theorem \ref{thm3}.
\end{proof}

\end{document}